\documentclass{article}

\newcommand{\bpymemail}{pym@maths.ox.ac.uk}

\usepackage{mathscinet}
\usepackage{array}
\setlength\extrarowheight{3pt} 

\usepackage{caption}
\usepackage{amsthm,amsfonts,amssymb,amsmath} 
\usepackage{aliascnt} 
\usepackage{mathrsfs} 
\usepackage{xargs,ifthen} 
\usepackage{enumitem}

\usepackage{color}
\usepackage{xypic} 


\usepackage{hyperref}
\definecolor{sxdarkblue}{RGB}{150,180,200} 
\definecolor{sxlightblue}{RGB}{186,215,230} 
\definecolor{sxred}{RGB}{153,0,0} 
\definecolor{tocolor}{rgb}{.1,.1,.1}
\definecolor{urlcolor}{rgb}{.2,.2,.6}
\definecolor{linkcolor}{rgb}{.1,.1,.5}
\definecolor{citecolor}{rgb}{.4,.2,.1}
\hypersetup{backref=true, colorlinks=true, urlcolor=urlcolor, linkcolor=linkcolor, citecolor=citecolor}


\newcommandx{\thdef}[2]{
	\newaliascnt{#1}{theorem}  
	\newtheorem{#1}[#1]{#2}
	\aliascntresetthe{#1}  
	\newtheorem*{#1*}{#2}
	\expandafter\newcommand\expandafter{\csname #1autorefname\endcsname}{#2}
}

\newtheorem{theorem}{Theorem}[section]

\thdef{lemma}{Lemma}
\thdef{corollary}{Corollary}
\thdef{problem}{Problem}
\thdef{proposition}{Proposition}

\theoremstyle{definition}
\thdef{definition}{Definition}

\theoremstyle{remark}
\thdef{remark}{Remark}
\thdef{example}{Example}


\newcommand{\defn}[1]{\textbf{\textit{#1}}} 
\newcommand{\spc}[1]{\mathsf{#1}} 
\newcommand{\shf}[1]{\mathcal{#1}} 


\newcommand{\CC}{\mathbb{C}}
\newcommand{\ZZ}{\mathbb{Z}}

\newcommand{\PP}[1][]{\mathbb{P}^{#1}}


\newcommand{\rbrac}[1]{\left(#1\right)} 
\newcommand{\abrac}[1]{\left\langle#1\right\rangle} 


\newcommand{\mapdef}[5]{
	\begin{array}{ccccc}
	#1 &:& #2 &\to& #3 \\
		&&  #4 &\mapsto& #5
	\end{array}
}
\newcommandx{\fn}[2][2=]{#1\ifthenelse{\equal{#2}{}}{}{\!\rbrac{{#2}}}} 


\newcommand{\ext}[2][\bullet]{\spc{\Lambda}^{#1}{#2}} 
\newcommandx{\sym}[3][1=\bullet,3=]{\spc{Sym}^{#1}_{#3}{#2}} 
\newcommandx{\Aut}[2][1=]{\fn{\spc{Aut}_{#1}}[#2]} 
\newcommandx{\image}[1]{\fn{\spc{img}}[#1]} 


\newcommandx{\hlgy}[3][1=\bullet,3=]{\spc{H}_{#1}^{#3}\!\rbrac{{#2}}} 
\newcommandx{\cohlgy}[3][1=\bullet,3=]{\spc{H}^{#1}_{#3}\!\rbrac{{#2}}} 
\newcommandx{\Pic}[2][1=]{\fn{\spc{Pic}_{#1}}[{#2}]} 


\newcommandx{\sHom}[2][1=]{ \fn{ \shf{H}om_{#1}}[{#2}] } 
\newcommand{\sO}[1]{\shf{O}_{#1}}
\newcommand{\Rforms}[2][\bullet]{\forms[#1,reg]{#2}}
\newcommandx{\forms}[2][1=\bullet]{\Omega^{#1}_{#2}} 
\newcommandx{\can}[1][1=]{\mathcal{K}_{#1}} 
\newcommandx{\acan}[1][1=]{\mathcal{K}_{#1}^{\vee}} 
\newcommandx{\tshf}[1]{\shf{T}_{#1}} 
\newcommandx{\der}[2][1=\bullet]{\mathscr{X}^{#1}_{#2}} 


\newcommandx{\lie}[2][2=]{\fn{\mathscr{L}_{#1}}[#2]} 
\newcommandx{\hook}[2][2=]{\fn{i_{#1}}[#2]} 
\newcommand{\cvf}[1]{\partial_{{#1}}} 
\newcommand{\deriv}[2]{\frac{d#1}{d#2}} 


\newcommandx{\Dgn}[2][1=]{\fn{\spc{Dgn}_{#1}}[#2]}
\newcommandx{\Sec}[2][1=]{\fn{\spc{Sec}_{#1}}[#2]}

\newcommand{\bL}{\spc{L}}
\newcommand{\bS}{\spc{S}}
\newcommand{\W}{\spc{W}}
\newcommand{\X}{\spc{X}}
\newcommand{\Y}{\spc{Y}}
\newcommand{\U}{\spc{U}}
\newcommand{\V}{\spc{V}}
\newcommand{\C}{\spc{C}}
\newcommand{\Q}{\spc{Q}}
\newcommand{\tD}{\widetilde{\spc{D}}}
\newcommand{\D}{\spc{D}}
\newcommand{\T}{\spc{T}}
\newcommand{\E}{\spc{E}}
\newcommand{\SU}[1]{\fn{\spc{SU}}[#1]}
\renewcommand{\H}{\spc{H}}

\newcommand{\sE}{\shf{E}}
\newcommand{\sI}{\shf{I}} 
\newcommand{\sJ}{\shf{J}}
\newcommand{\sF}{\shf{F}}
\newcommand{\sN}{\shf{N}}

\newcommand{\sA}{\shf{A}}


\newcommand{\tE}{\widetilde{E}}
\newcommand{\ps}{\sigma} 
\newcommand{\pss}{\ps^\sharp} 
\newcommand{\Res}{\mathrm{Res}\,} 
\newcommand{\Pois}[1]{\fn{\spc{Pois}}[{#1}]} 
\newcommand{\Pf}{\mathrm{Pf}} 
\newcommand{\hess}[1]{Hess(#1)}

\usepackage{thmtools}

\usepackage{titling}

\setlength{\droptitle}{-5em}   
\setcounter{tocdepth}{2}
\usepackage{abstract}

\let\oldbibliography\thebibliography
\renewcommand{\thebibliography}[1]{\oldbibliography{#1}
\setlength{\itemsep}{0pt}} 

\newcommand{\papertitle}{Elliptic singularities on log symplectic manifolds and Feigin--Odesskii Poisson brackets}
\hypersetup{pdftitle={\papertitle},pdfauthor={Brent Pym}}

\begin{document}

\title{\papertitle}

\author{Brent Pym\thanks{Mathematical Institute and Jesus College, University of Oxford, \href{mailto:\bpymemail}{\texttt{\bpymemail}}}}

\maketitle

\begin{abstract}
A log symplectic manifold is a complex manifold equipped with a complex symplectic form that has simple poles on a hypersurface.  The possible singularities of such a hypersurface are heavily constrained.  We introduce the notion of an elliptic point of a log symplectic structure, which is a singular point at which a natural transversality condition involving the modular vector field is satisfied, and we prove a local normal form for such points that involves  the simple elliptic surface singularities $\tE_6,\tE_7$ and $\tE_8$.  Our main application is to the classification of Poisson brackets on Fano fourfolds.  For example, we show that Feigin and Odesskii's Poisson structures of type $q_{5,1}$ are the only log symplectic structures on projective four-space whose singular points are all elliptic.
\end{abstract}

\tableofcontents

\newpage
\section{Introduction}

During the past twenty years, the classification of projective   Poisson varieties of low dimension has seen considerable progress; we note, in particular, the works on  surfaces~\cite{Bartocci2005,Ingalls1998}, threefolds with finitely many zero-dimensional symplectic leaves~\cite{Druel1999}, and Fano threefolds with Picard rank one~\cite{Cerveau1996,Loray2013,Polishchuk1997}. 

But in higher dimensions---even for simple manifolds such as the projective space $\PP[4]$---rather less is known.  One of the main difficulties is the increase in the complexity of the symplectic foliation: in dimension four, one encounters, for the first time, the possibility that the foliation could have leaves of three different dimensions (zero, two or four).  It is therefore useful to impose constraints that simplify the local structure of the foliation, which we do in this paper by focusing on the class of ``log symplectic'' manifolds.

A log symplectic manifold is simply a real or complex manifold $\X$, equipped with a symplectic form $\omega$ that has simple poles along a hypersurface $\D\subset \X$.  Put differently, we require that the Poisson bracket on $\X\setminus \D$ defined by $\omega$ extends to a Poisson bracket on all of $\X$ whose Pfaffian is a reduced defining equation for $\D$.  Thus, log symplectic manifolds are the Poisson manifolds that are, in some sense, as close as possible to being symplectic.  As a result, they often arise when one attempts to compactify symplectic manifolds; in particular, many interesting moduli spaces in algebraic geometry and gauge theory come equipped with natural log symplectic structures.  Examples include compactified moduli spaces of $\SU{2}$ monopoles~\cite{Atiyah1988,Goto2002}; Hilbert schemes of various commutative surfaces~\cite{Bottacin1998,Ran2013} and their  noncommutative analogues~\cite{Nevins2007}; and certain moduli spaces of vector bundle maps over elliptic curves~\cite{Feigin1998,Polishchuk1998}.

Recently, there has been considerable interest in the general properties of log symplectic manifolds---particularly in the real  $C^\infty$ category, where they are also known as $b$-symplectic, or topologically stable.  Results include descriptions of local normal forms and cohomological invariants~\cite{Dongho2012,Guillemin2014}; obstructions to global existence~\cite{Cavalcanti2013,Marcut2014a}; unobstructedness of deformations~\cite{Marcut2014,Ran2013}; constructions of symplectic groupoids~\cite{Gualtieri2014a}, deformation quantizations~\cite{Nest1996} and Rozansky--Witten invariants~\cite{Goto2002}; classifications up to diffeomorphism~\cite{Radko2002} and Morita equivalence~\cite{Bursztyn2003} in the two-dimensional case; and Delzant-type classifications of the corresponding toric integrable systems~\cite{Gualtieri2014,Guillemin2015}.  Generalized complex analogues of log symplectic structures have also been studied~\cite{Cavalcanti2015,Goto2015}.

Since most of these works take place in the $C^\infty$ category, they assume that the hypersurface $\D$ is smooth, or, occasionally, that it is a union of smooth components with normal crossings.  But in this paper, we are interested in the holomorphic case, where it seems to be rather difficult to find compelling examples for which these assumptions on the hypersurface are satisfied.  In particular, for the  moduli spaces mentioned above, the singularities of the hypersurface $\D$ are rather more complicated. Nevertheless, the unobstructedness of deformations for Hilbert schemes has been established using a resolution of singularities~\cite{Ran2013}.

It is also difficult to find examples on simple varieties of interest, such as projective space.
Since $\D$ is always an anticanonical divisor, it is reasonable to look for log symplectic structures on complex manifolds for which the anticanonical bundle has many holomorphic sections, such as Fano manifolds.  In so doing, we immediately find hypersurfaces that are highly singular:
\begin{theorem}[\cite{Gualtieri2013a}]\label{thm:fano-singular}
Let $(\X,\D,\omega)$ be a holomorphic log symplectic manifold, and let $\D_{sing} \subset \D$ be the singular locus of the degeneracy hypersurface.  Suppose that $\X$ is Fano, or that the polynomial $c_1c_2-c_3 \in \cohlgy[6]{\X,\ZZ}$ in the Chern classes of $\X$ is nonzero.  Then $\D_{sing}$ is nonempty, and all of its irreducible components have codimension at most three in $\X$.  When the codimension of $\D_{sing}$ is exactly three and $\X$ is compact, the formula
\[
[\D_{sing}] =  (c_1c_3-c_3) \cap [\X] \in \hlgy[\dim \X - 6]{\X,\ZZ}
\]
gives the fundamental class of the singular locus.
\end{theorem}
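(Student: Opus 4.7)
The strategy is to analyze the skew-symmetric bundle map $\pss \colon \forms[1]{\X} \to \tshf{\X}$ induced by the log symplectic bivector $\sigma$, viewed equivalently as a global section of $\ext[2]{\tshf{\X}}$, and then to apply the general theory of degeneracy loci of skew maps.  Since $\dim\X = 2n$, the map $\pss$ has generic rank $2n$, and $\D$ is exactly the locus where the rank drops to ${\leq}\,2n-2$.  The first step is to check that at every smooth point of $\D$ the map $\pss$ has rank exactly $2n-2$: any further drop in rank would force the Pfaffian $\sigma^{n}$ to vanish to order at least two at that point, contradicting the hypothesis that $\D$ is reduced.  Consequently $\D_{sing}$ agrees as a set with the secondary Pfaffian locus $\D_2 \subset \X$ where $\pss$ has rank ${\leq}\,2n-4$, cut out by the $(2n-2)\times(2n-2)$ sub-Pfaffians of $\pss$.

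The next step is to invoke the theorems on skew-symmetric degeneracy loci of Harris--Tu and J\'ozefiak--Lascoux--Pragacz: for a section of $\ext[2]{\sE}$ of an even-rank bundle $\sE$, the locus where the section has rank ${\leq}\,\mathrm{rk}(\sE)-4$ has expected (and maximum) codimension $3$, with every irreducible component of codimension at most $3$ whenever the locus is nonempty, and its pure codimension-three part is represented by an explicit degree-three Pfaffian--Schur polynomial in the Chern classes of $\sE$.  Applied with $\sE = \tshf{\X}$, this gives the codimension bound for the components of $\D_{sing}$; a direct expansion of the relevant Schur polynomial should then identify the degree-three class as $(c_1 c_2 - c_3) \cap [\X]$, producing the formula in the theorem.

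Finally, I would address nonemptiness.  When $c_1c_2 - c_3 \neq 0$ in $\cohlgy[6]{\X,\ZZ}$, the universal class formula immediately forces $\D_{sing}$ to be nonempty and to contain a component of codimension exactly three, because any cycle of strictly larger codimension has zero image in $\cohlgy[6]$.  The Fano hypothesis is the genuinely different case, and is where I expect the main obstacle.  Here I would proceed by contradiction: if $\D_{sing}$ were empty, then $\D$ would be a smooth hypersurface in $\X$, ample because $[\D] = c_1$ is ample by the Fano assumption, and the residue of $\sigma$ along $\D$ would equip it with a regular corank-one Poisson structure, hence with a codimension-one holomorphic foliation.  The hard part is to derive a contradiction from the existence of such a foliation on an ample anticanonical divisor in a Fano manifold, for example via a Bott-type vanishing argument for the normal line bundle of the foliation combined with Kodaira vanishing, or by invoking known rigidity results for codimension-one foliations on Fano varieties.
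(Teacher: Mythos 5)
There is a genuine gap, and it occurs at the very first step. You identify $\D_{sing}$ with the locus where $\ps$ has rank $\le 2n-4$, but this identification is false in general. Your argument only controls the rank at \emph{smooth} points of $\D$ (where it is indeed exactly $2n-2$); it does not show that the rank drops further at singular points. For the toric log symplectic structures of Section 2.2, with $\ps = \sum_{i<j}(\lambda^{-1})_{ij}x_ix_j\,\cvf{x_i}\wedge\cvf{x_j}$, the singular locus of $\D$ is the union of the codimension-two strata $\{x_i=x_j=0\}$, yet at a generic point of such a stratum the remaining $(2n-2)\times(2n-2)$ block is invertible, so $\ps$ still has rank $2n-2$ there. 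Moreover, even where the identification does hold, the numerology is wrong: for a skew form on the \emph{even}-rank bundle $\tshf{\X}$, the locus where the rank is $\le 2n-4$ has expected codimension $\binom{4}{2}=6$, not $3$, so the Harris--Tu/J\'ozefiak--Lascoux--Pragacz formula would produce a degree-six Chern class, which cannot possibly expand to $c_1c_2-c_3$. The codimension-three bound in the theorem is the signature of a \emph{first} degeneracy step of a skew form on an \emph{odd}-rank bundle, since $\binom{3}{2}=3$.

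That odd-rank bundle is the missing idea, and it is how the cited proof (sketched in the paper's Theorem \ref{thm:codim3}) proceeds: one lifts $\ps$ to a canonical section $\ps_\sA$ of $\ext[2]{\sA}$, where $\sA$ is the Atiyah algebroid of $\acan[\X]$, an extension of $\tshf{\X}$ by $\sO{\X}$ of rank $2n+1$. A local computation with a trivialization $\mu$ and $\ps^n = h\mu$ gives $\ps_\sA^n = (h\mu,\hook{dh}\mu)$, so the locus where $\ps_\sA$ drops from its generic rank $2n$ to $\le 2n-2$ is \emph{exactly} $\{h=dh=0\}=\D_{sing}$, scheme-theoretically. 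The degeneracy-locus machinery you invoke then applies correctly to $\ps_\sA$: every component has codimension $\le 3$, and the expected class is the degree-three Pfaffian--Schur polynomial in $c(\sA)=c(\sO{\X})c(\tshf{\X})=c(\X)$, namely $c_1c_2-c_3$. Your handling of nonemptiness when $c_1c_2-c_3\neq 0$ is fine once this setup is in place, but the Fano case is left genuinely open in your proposal (your foliation-rigidity sketch is not carried out); in the source it is also extracted from the structure of $\ps_\sA$ rather than from a foliation argument on a smooth $\D$.
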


Meanwhile, toric varieties give many examples for which $\D$ is a simple normal crossings divisor. By an inductive argument that reduces the problem to dimension three by intersecting components of $\D$, Lima and Pereira showed that every normal crossings example that is Fano with cyclic Picard group is toric:
\begin{theorem}[\cite{Lima2014}]\label{thm:lima-pereira}
Let $(\X,\D,\omega)$ be a log symplectic Fano manifold of dimension $2n$ where $n >2$, and suppose that $\Pic{\X}\cong \ZZ$.  If the degeneracy hypersurface of $\omega$ is a union of smooth components with normal crossings, then $\X \cong \PP[2n]$ is a projective space, $\D$ is the union of the coordinate hyperplanes, and $\omega$ restricts to an invariant symplectic form on the torus $(\CC^\times)^{2n} = \PP[2n] \setminus \D$
\end{theorem}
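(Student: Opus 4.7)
The theorem's hypotheses impose strong constraints on how the components of $\D$ can fit together; the strategy is to exploit these via an inductive reduction that restricts $\omega$ to successive intersections of components of $\D$ until the problem lands on a Fano threefold with cyclic Picard group, where Poisson structures have been classified.

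\emph{Local analysis and setup.}  First I would obtain a local normal form for $\omega$ at a point $p \in \X$ where $k$ components of $\D$ meet transversally: by SNC and the log symplectic condition, there exist coordinates $x_1,\ldots,x_{2n}$ in which $\D$ is locally cut out by $x_1 \cdots x_k$ and $\omega$ is a constant-coefficient linear combination of wedges of the log differentials $dx_i/x_i$ (for $i \le k$) and the $dx_j$ (for $j > k$), subject to a nondegeneracy constraint.  Since $\Pic \X \cong \ZZ$ is generated by an ample class $H$, each irreducible component $\D_i$ satisfies $\D_i \sim d_i H$ with $d_i \ge 1$, and $\sum_i d_i$ equals the Fano index of $\X$.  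The goal is to show that each $d_i = 1$ and that there are exactly $2n+1$ components.

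\emph{Reduction to a threefold.}  The ideal of each $\D_i$ is a Poisson ideal, so the bivector restricts; iterating this to a transverse intersection of $2n-3$ components produces a smooth projective threefold $\Y \subset \X$ carrying an induced Poisson structure whose Pfaffian vanishes along the further intersections of the remaining components, and these remain SNC by transversality.  At each stage of this process, the relevant intermediate intersection is a smooth ample subvariety of the previous one of dimension strictly greater than three, so the Lefschetz hyperplane theorem forces the Picard group to remain cyclic; this is precisely where the hypothesis $n > 2$ is used, since the threefold $\Y$ itself is the last stage at which Lefschetz may be invoked.  Similarly, Fano-ness and ampleness of each remaining component descend along the restrictions.

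\emph{Base case and reconstruction.}  Having reduced to a Poisson threefold $\Y$ with cyclic Picard group and SNC degeneracy divisor whose components are all ample, I would apply the classification of Poisson structures on Fano threefolds with cyclic Picard group \cite{Cerveau1996,Loray2013,Polishchuk1997} to identify $\Y \cong \PP^3$ with its standard toric Poisson structure, whose degeneracy is precisely the sum of the four coordinate hyperplanes.  Pulling this identification back up the tower of restrictions, one concludes that each $\D_i$ meets a generic three-dimensional stratum in a hyperplane, hence has degree one in $\X$, so that $\D$ contains $2n+1$ hyperplane components.  Thus $-K_\X = (2n+1)H$, and Kobayashi--Ochiai's characterisation of projective space forces $\X \cong \PP[2n]$.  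Finally, once the components of $\D$ are identified with the $2n+1$ coordinate hyperplanes, the local normal form in Darboux-type log coordinates at a point where all hyperplanes through it meet shows that $\omega$ is invariant under the open torus $(\CC^\times)^{2n} = \PP[2n]\setminus \D$.

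\emph{Main obstacle.}  The chief technical difficulty lies in the reduction step: the naive restriction of $\sigma$ to a single component picks up a transverse Casimir and so fails to be log symplectic; the restriction to a codimension-two stratum similarly acquires Casimirs corresponding to the residue data.  One must therefore carefully track these Casimirs---or, equivalently, pass to symplectic leaves or to an appropriate residue construction---to argue that the structure effectively inherited by $\Y$ is constrained enough to trigger the threefold classification, and that the SNC and cyclic-Picard hypotheses on the degeneracy divisor survive each iteration.
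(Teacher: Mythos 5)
First, a point of context: the paper does not actually prove this statement --- it is quoted from Lima and Pereira \cite{Lima2014}, and the only indication of the argument given here is the one--sentence summary ``an inductive argument that reduces the problem to dimension three by intersecting components of $\D$.'' Your plan follows exactly that strategy, so at the level of architecture you have reconstructed the intended argument: reduce to a threefold stratum, invoke the classification of Poisson/foliated Fano threefolds with cyclic Picard group, and rebuild $\X\cong\PP[2n]$ via Kobayashi--Ochiai.

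There is, however, a genuine gap at the heart of your reduction step: you take for granted that $\D$ has at least $2n-3$ irreducible components whose common intersection is a nonempty smooth threefold. Nothing in the hypotheses gives this for free, and producing the components is the actual content of the induction rather than a preliminary to it. The engine is \autoref{thm:fano-singular}: on a Fano log symplectic manifold the singular locus $\D_{sing}$ is nonempty, so in the normal crossings case at least two components must meet; one then shows that a suitable stratum $\D_i\cap\D_j$ is again a log symplectic Fano manifold with cyclic Picard group and SNC polar divisor, and re-applies the nonemptiness statement there to force still more components, descending until a three-dimensional stratum with an induced corank-one Poisson structure appears. Your proposal inverts this logic --- it assumes the tower exists and verifies that the Picard and Fano hypotheses persist down it --- so if $\D$ happened to have only two or three components, your argument would never reach dimension three. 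Relatedly, the difficulty you flag as the ``main obstacle'' is real and cannot be deferred: the induced structure on $\D_i\cap\D_j$ is log symplectic only where the biresidue $\lambda_{ij}$ is nonzero, and in your local normal form a $2\times 2$ principal minor of the period matrix can vanish even though the full matrix is invertible. One must either show that components with nonvanishing biresidue can always be chosen at each stage or treat the degenerate strata separately; until both points are settled, the reduction to the threefold classification is not established.
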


In light of these considerations, we are lead to ask what other sorts of singular hypersurfaces $\D \subset \X$ can arise as the degeneracy hypersurface of a log symplectic structure, and to develop techniques for dealing with the singularities when they arise.  In this paper, we initiate the systematic study of this class of hypersurface singularities, focusing on the case in which $\D$ is normal, i.e.~it has no singularities of codimension two in $\X$. The central message of the paper is that elliptic curves play a fundamental role in the geometry---both locally and globally---and that despite the increased complexity of the singularities, rigidity results analogous to \autoref{thm:lima-pereira} still hold true.

After a review of some basic defininitions and examples in \autoref{sec:logforms}, we lay the foundations for the local structure theory of log symplectic manifolds with singular hypersurfaces in \autoref{sec:local}, using K.~Saito's theory of logarithmic forms.  In \autoref{thm:codim3} and \autoref{prop:codim3-converse}, we show that a normal hypersurface  singularity germ $\D \subset \X$ supports a nondegenerate logarithmic two-form if and only if its singular locus is Gorenstein of pure codimension three in $\X$---a skew-symmetric analogue of the Jacobian condition~\cite{Aleksandrov1988,Terao1980} for the freeness of a divisor.  Meanwhile \autoref{thm:germ-moduli}, based on a Moser lemma for the logarithmic de Rham complex, gives an efficient mechanism for producing local normal forms.

In \autoref{sec:ell-ex}, we introduce the notion of an elliptic point of a log symplectic structure: a point where $\D$ has a normal singularity and a natural transversality condition involving the Weinstein modular vector field is satisfied; see \autoref{def:ell-point}.  Ellipticity is an open condition on the one-jet of the Poisson tensor, giving a class of Poisson structures that is stable under deformation. For this reason, we believe that elliptic points are, in some sense, the simplest normal singularities one could hope for.

We justify the use of the word ``elliptic'' by proving a local normal form (\autoref{thm:local-elliptic}), which shows that near an elliptic point, the hypersurface is isomorphic to a product of a smooth space with a simple elliptic surface singularity of type $\tE_6$, $\tE_7$ or $\tE_8$.  The main examples are the Poisson structures $q_{2n+1,1}$ on $\PP[2n]$ introduced by Feigin and Odesskii~\cite{Feigin1989}, for which the hypersurface $\D$ is the union of the secant $(n-1)$-planes of an elliptic normal curve.  In these examples, the generic singularities of $\D$ are of type $\tE_6$.

The paper culminates in \autoref{sec:ell-fano}, where we apply our techniques to the classification of Poisson structures on Fano fourfolds:

\begin{restatable}{theorem}{fanos}\label{thm:p4-ell}
Every purely elliptic log symplectic structure on $\PP[4]$ is isomorphic to a member of Feigin and Odesskii's family $q_{5,1}$.   Moreover, the following Fano fourfolds do not support any purely elliptic log symplectic structures:
\begin{itemize}[noitemsep,topsep=4pt]
\item Smooth quadric or cubic fourfolds $\X \subset \PP[5]$
\item Products of the form $\X = \PP[1]\times \W$, where $\W \subset \PP[4]$ is a Fano hypersurface 
\end{itemize}
\end{restatable}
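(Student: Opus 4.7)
The plan is to combine the fundamental class formula of \autoref{thm:fano-singular} with the local normal form of \autoref{thm:local-elliptic}. The local form yields two immediate structural consequences. First, each irreducible component $C_i$ of $\D_{sing}$ is a smooth curve, and its scheme-theoretic multiplicity in the Jacobian-cut-out locus equals the Milnor number $\mu(\tE_{k_i})\in\{8,9,10\}$ of its transverse slice. Second, I expect that the transversality condition in \autoref{def:ell-point} implies that the Weinstein modular vector field restricts to a nowhere-vanishing section of $TC_i$, so by Poincar\'e--Hopf each $C_i$ is a smooth elliptic curve.

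For $\X=\PP[4]$, substituting $c(\PP[4])=(1+H)^5$ into the class formula yields $c_1c_2-c_3=40H^3$, so the components must satisfy $\sum_i \mu(\tE_{k_i})\deg(C_i)=40$. Since a smooth elliptic curve in $\PP[4]$ has degree at least three, the Diophantine equation has only a handful of solutions. The remaining ones are to be eliminated by a compatibility between $C_i$ and its transverse slice: the family of $\tE_{k_i}$ slices over $C_i$ should be isotrivial and its fiberwise $j$-invariant must extend algebraically and fit the ambient embedding. I expect that only $k_i=6$ with $\deg(C_i)=5$ survives, so $\D_{sing}$ is an irreducible elliptic normal quintic $E$ of type $\tE_6$. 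Then $\D$ is forced to be the (quintic) secant variety of $E$, and \autoref{thm:germ-moduli} upgrades the local normal forms to a global isomorphism with a member of the Feigin--Odesskii family $q_{5,1}$.

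The non-existence statements follow by the same template. Chern-class expansion gives, on a smooth quadric $\X\subset\PP[5]$ with $c(\X)=(1+H)^6/(1+2H)$, the class $c_1c_2-c_3=22H^3$, so $\deg[\D_{sing}]=44$ in $\PP[5]$; on a smooth cubic, analogous expansion gives degree $48$; and on a product $\PP[1]\times\W$, the product formula for Chern classes yields a class whose interaction with the projection to $\PP[1]$ is incompatible with elliptic components (they would need to project isomorphically onto the rational $\PP[1]$, or to sit in a fiber with an excessive Milnor-number contribution against the available ample classes on $\W$). In each case the Diophantine equation $\sum_i\mu(\tE_{k_i})\deg(C_i)=\deg[\D_{sing}]$, combined with the embedding constraints on elliptic normal curves in the ambient variety and the slice-compatibility condition, admits no solution.

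The main obstacle, I expect, will be making the slice-compatibility condition precise and sharp enough to eliminate the alternative numerical solutions: the Diophantine equation typically allows competing configurations (such as an elliptic quartic with $\tE_8$ slices on $\PP[4]$), and only a careful geometric analysis of the isotrivial family of slices and its fit with the ambient projective embedding can exclude them. Carrying this out on each of the four ambient fourfolds, while tracking the Gorenstein and Cohen--Macaulay structure of $\D_{sing}$ required by \autoref{thm:codim3}, is where the bulk of the work will reside.
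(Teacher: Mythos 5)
Your numerical skeleton is the same as the paper's and it genuinely suffices for two of the four cases: the computation $8a_6+9a_7+10a_8=(c_1c_2-c_3)H\cap[\X]$ (giving $40$ on $\PP[4]$, $44$ on the quadric, $48$ on the cubic) combined with the fact that an elliptic curve in projective space has degree at least three kills the quadric outright, and your observation that on $\PP[1]\times\W$ some component would have to project isomorphically onto $\PP[1]$ is exactly how the paper disposes of the product case. The identification of the multiplicity of $\D_{sing}$ along each component with the Milnor number, and the trivialization of the canonical bundle of the singular locus by the modular vector field, are also correct and are what the paper does.

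The gap is that the entire remaining content of the theorem is deferred to a ``slice-compatibility condition'' (isotriviality of the family of $\tE_k$ slices, extension of the fiberwise $j$-invariant) that is never made precise and is not, as far as I can see, capable of doing the required work. Three separate arguments are missing. (i) On $\PP[4]$ the solution $(0,0,4)$ --- a degree-four elliptic curve with $\tE_8$ slices --- is not excluded by any isotriviality consideration: the curve is a complete intersection $(h,q_1,q_2)$, and the paper eliminates it by lifting the Poisson bracket to the homogeneous coordinate ring via Bondal's theorem, exploiting that $(h,q_1,q_2)$ is a Poisson ideal, that the quintic $f$ is Casimir, and that the $\tE_8$ condition forces the Hessian of $f$ to have rank one along the cone over $\Y$; a long explicit computation then shows $(h)$ is a Poisson ideal, so $\H\subset\D$, contradicting normality. (ii) On the cubic fourfold the Diophantine equation \emph{does} have the admissible solution $(6,0,0)$, and degree-six elliptic curves certainly exist on cubic fourfolds, so your claim that the numerics plus embedding constraints ``admit no solution'' is false as stated; the paper must show (\autoref{lem:mult3}, \autoref{prop:cubic-6}, \autoref{prop:cubic-33}) that an anticanonical divisor with multiplicity three along such a curve, or along two disjoint plane cubics, extends to a cubic in $\PP[5]$ containing the secant planes $\Sec[2]{\C}$ and is forced to be non-normal. (iii) Even in the surviving case, \autoref{thm:germ-moduli} is a statement about germs and cannot ``upgrade'' to a global isomorphism; after showing $\D=\Sec{\Y}$ one still needs the global uniqueness of the log symplectic form, which the paper obtains from $\cohlgy[0]{\PP[4],\forms[2]{\PP[4]}}=0$ together with the fact that the resolution $\tD\to\Y^{[2]}\to\Y$ of the secant variety is an iterated $\PP[1]$-bundle, so that the residue one-form is unique up to scale. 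These three steps are where the theorem actually lives, and the proposal does not contain them.
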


\begin{remark}
One can show that a smooth hypersurface $\X \subset \PP[n]$ of degree $d$ with $d,n \ge 4$ does not admit any Poisson structures.  This is why we focus on quadric and cubic fourfolds, which do possess nontrivial Poisson structures. \qed
\end{remark}

The method of proof, which can easily be applied to other compact fourfolds, is as follows.  In four dimensions, if all singular points are elliptic, then the singular locus is necessarily a disjoint union of elliptic curves.  Using \autoref{thm:fano-singular} and the local normal form, we can compute the possible degrees of these elliptic curves, and immediately rule out most of the manifolds in question.  But for $\PP[4]$ and the cubic fourfolds, more refined information is required; the hard part of the proof is a detailed analysis of the algebraic geometry in those cases.  

The reader familiar with Feigin and Odesskii's Poisson structures may be wondering why some other examples do not appear.  Indeed, Feigin and Odesskii defined a second family of Poisson structures on $\PP[4]$, called $q_{5,2}$.  Those Poisson structures are log symplectic, and are also associated with elliptic curves, being related to $q_{5,1}$ by a birational automorphism of $\PP[4]$.  However, their degeneracy hypersurfaces are not normal, so they are not elliptic in our sense.

Meanwhile, for the Poisson structures $q_{6,1}$ on $\PP[5]$, the closures of the four-dimensional symplectic leaves give a pencil of cubic fourfolds whose base locus is the secant variety of an elliptic normal sextic.  One might therefore expect these cubics to give elliptic log symplectic manifolds.  However, these cubics are singular, and so there is no contradiction with \autoref{thm:p4-ell}.

\subsubsection*{Acknowledgements}

The author thanks Ragnar-Olaf Buchweitz, Eleonore Faber, Michel Granger and Brian Pike for helpful conversations about logarithmic forms; and Marco Gualtieri, Nigel Hitchin, Colin Ingalls and Vladimir Rubtsov for stimulating discussions about elliptic Poisson structures.  This work was supported by EPSRC Grant EP/K033654/1.

\section{Basic definitions and examples}
\label{sec:logforms}

\subsection{Logarithmic differential forms}

The notion of a differential form with logarithmic singularities along a hypersurface was introduced by Deligne~\cite{Deligne1970} in the normal crossings case, and extended to general hypersurfaces by K.~Saito~\cite{Saito1980}.  In this section, we briefly recall the notions and results from Saito's paper that will be relevant in our study of log symplectic structures.

Here and throughout, $\X$ is a complex manifold and $\D \subset \X$ is a reduced hypersurface; thus, $\D$ may have several irreducible components, but each component is taken with multiplicity one.  We denote by $\D_{sing}$ the singular locus of $\D$, i.e.~the closed analytic subspace defined by the vanishing of the one-jet of a local defining equation for $\D$.  We recall that $\D$ is \defn{normal} if and only if $\dim \D_{sing} \le \dim \D - 2 = \dim \X - 3$.  This condition should not be confused with the \defn{normal crossings} condition, which means that $\D$ is locally isomorphic to the union of a collection of coordinate hyperplanes in $\CC^n$, and therefore has a singular locus of dimension $\dim \X - 2$.

We denote by $\forms{\X}(\D)$ the sheaf of meromorphic  differential forms that are holomorphic on $\X\setminus \D$ and have, at worst, poles of order one on every irreducible component of $\D$. 

\begin{definition}[\cite{Saito1980}]
A meromorphic $k$-form $\omega \in \forms[k]{\X}(\D)$ has \defn{logarithmic singularities along $\D$} if $d\omega \in \forms[k+1]{\X}(\D)$.  The sheaf of $k$-forms with logarithmic singularities is denoted by $\forms[k]{\X}(\log \D)$.
\end{definition}

Just as the holomorphic forms $\forms{\X}$ are dual to the exterior powers of the tangent sheaf $\der{\X} = \ext{\tshf{\X}}$, the dual of $\forms{\X}(\log \D)$ is the sheaf $\der{\X}(-\log\D) \subset \der{\X}$ of \defn{logarithmic multiderivations}---the multiderivations that are tangent to $\D$ in the sense that their action preserves the defining ideal $\sO{\X}(-\D) \subset \sO{\X}$

By definition, the de Rham differential maps logarithmic forms to logarithmic forms.  Hence we have a complex of sheaves $(\forms{\X}(\log \D),d)$, called the \defn{logarithmic de Rham complex}.  The cohomology of this complex is, in general, quite sensitive to the singularities of $\D$.  In \cite{Saito1980}, Saito explains that every logarithmic form $\omega \in \forms[k]{\X}(\log\D)$ has a residue $\Res \omega$, which is a meromorphic form on $\D$ that is holomorphic on the smooth locus.  In this way, we obtain an exact sequence of complexes
\[
\xymatrix{
0 \ar[r] & \forms{\X} \ar[r] & \forms{\X}(\D) \ar[r]^-{\Res} \ar[r] & \Rforms{\D} \ar[r] & 0,
}
\]
where $\Rforms{\D}$ is the image of the residue map.  We recall that $\Rforms{\D}$ may be identified~\cite{Aleksandrov1990} with the ``regular'' differential forms on $\D$ in the sense of \cite{Barlet1978}.  In particular, when $\D$ is normal, we have that $\Rforms{\D} = (\forms{\D})^{\vee\vee}$ is the double dual of the usual differential forms on $\D$.

The sheaf $\forms{\X}(\log \D)$ is a coherent $\sO{\X}$-module, but in general it will not be locally free, i.e.~there will be no holomorphic vector bundle on $\X$ whose sheaf of sections is $\forms{\X}(\log\D)$.  However, $\forms{\X}(\log \D)$ is a reflexive $\sO{\X}$-module, meaning that the canonical map $\forms{\X}(\log \D)\to \forms{\X}(\log \D)^{\vee\vee}$ to the double dual is an isomorphism.  As a result, elements of $\forms{\X}(\log \D)$ exhibit the Hartogs phenomenon~\cite{Hartshorne1980}: sections defined away from an analytic subspace of codimension at least two extend uniquely to all of $\X$.

Amongst all the singular hypersurfaces $\D \subset \X$, the ones for which the sheaf $\forms[1]{\X}(\log \D)$ is locally free are rather special.  They are called \defn{free divisors}.  For example, if $\D$ is smooth or has only normal crossings singularities, then $\D$ is a free divisor.  In this case, we simply have $\forms[k]{\X}(\log \D) \cong \ext[k]{\forms[1]{\X}(\log \D)}$.  In general, the freeness of a divisor is determined by the structure of its singular locus:
\begin{theorem}[\cite{Aleksandrov1988,Terao1980}]\label{thm:jac-free}
A singular reduced hypersurface $\D \subset \X$ is a free divisor if and only if the singular locus $\D_{sing} \subset \D$ is a Cohen--Macaulay space of pure codimension two in $\X$.
\end{theorem}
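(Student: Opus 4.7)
The plan is to exploit the duality between $\forms[1]{\X}(\log \D)$ and the sheaf $\der[1]{\X}(-\log \D)$ of logarithmic vector fields: both are reflexive, so one is locally free if and only if the other is, and it is convenient to work with the latter. Locally, let $f \in \sO{\X}$ be a reduced defining equation for $\D$. There is a natural $\sO{\X}$-linear map $\der[1]{\X} \to \sO{\D}$ sending $v \mapsto v(f) \bmod (f)$, whose kernel is $\der[1]{\X}(-\log \D)$ by definition of logarithmic vector fields, and whose image is the Jacobian ideal sheaf $\sJ \subset \sO{\D}$, characterized by $\sO{\D}/\sJ = \sO{\D_{sing}}$. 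This yields the short exact sequence
\[
0 \to \der[1]{\X}(-\log \D) \to \der[1]{\X} \to \sJ \to 0.
\]
Since $\der[1]{\X}$ is locally free, the divisor $\D$ is free if and only if $\sJ$ has projective dimension at most one over $\sO{\X}$; by the Auslander--Buchsbaum formula, this is equivalent to $\mathrm{depth}(\sJ_x) \ge n - 1$ at every $x \in \D_{sing}$, where $n = \dim \X$.

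To convert this depth condition into the desired geometric statement about $\D_{sing}$, the plan is to use the defining sequence
\[
0 \to \sJ \to \sO{\D} \to \sO{\D_{sing}} \to 0,
\]
together with the fact that $\sO{\D}$ is Cohen--Macaulay with $\mathrm{depth}(\sO{\D,x}) = n - 1$ at every $x \in \D$, because $\D$ is a hypersurface in a smooth manifold. The standard depth lemma applied to this sequence gives the two inequalities
\[
\mathrm{depth}(\sJ_x) \ge \min\rbrac{n-1,\ \mathrm{depth}(\sO{\D_{sing},x}) + 1}
\]
and
\[
\mathrm{depth}(\sO{\D_{sing},x}) \ge \min\rbrac{\mathrm{depth}(\sJ_x) - 1,\ n - 1},
\]
supplemented by the unconditional bound $\dim \sO{\D_{sing},x} \le n - 2$, which holds because $\D$ is reduced and $\D_{sing}$ is a proper closed subspace.

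The theorem then follows in both directions. If $\D_{sing}$ is Cohen--Macaulay of pure codimension two, then $\mathrm{depth}(\sO{\D_{sing},x}) = n - 2$ at every $x$, so the first inequality gives $\mathrm{depth}(\sJ_x) \ge n - 1$ and $\D$ is free. Conversely, if $\D$ is free then $\mathrm{depth}(\sJ_x) \ge n - 1$, and the second inequality, combined with $\mathrm{depth} \le \dim \le n - 2$, forces the equalities $\mathrm{depth}(\sO{\D_{sing},x}) = \dim \sO{\D_{sing},x} = n - 2$ at every point of $\D_{sing}$, so $\D_{sing}$ is Cohen--Macaulay of pure codimension two. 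The main obstacle is really just the clean identification of the cokernel of the first exact sequence with the Jacobian ideal, which rests on Saito's explicit description of logarithmic derivations in terms of a defining equation; beyond that, the argument is a piece of commutative algebra whose only nontrivial geometric input is the Cohen--Macaulay property of a hypersurface in a smooth manifold.
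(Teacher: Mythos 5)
The paper states this theorem purely as a citation to Aleksandrov and Terao and gives no proof of its own, so there is nothing internal to compare against; your argument is correct and is essentially the standard proof found in those references. You convert freeness, via the exact sequence $0 \to \der[1]{\X}(-\log\D) \to \der[1]{\X} \to \sJ \to 0$ and Auslander--Buchsbaum over the regular local ring $\sO{\X,x}$, into the depth condition $\mathrm{depth}(\sJ_x) \ge n-1$, and then the depth lemma applied to $0 \to \sJ \to \sO{\D} \to \sO{\D_{sing}} \to 0$ translates this into the Cohen--Macaulay condition, the only geometric inputs being that $\sO{\D,x}$ has depth $n-1$ (hypersurface in a smooth ambient space) and that $\dim\sO{\D_{sing},x} \le n-2$ because $\D$ is reduced. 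All the steps check out, including the identification of the cokernel of the first sequence with the Jacobian ideal, which matches the paper's definition of $\D_{sing}$ as the vanishing locus of the one-jet of a defining equation, and the reflexivity argument reducing from $\forms[1]{\X}(\log\D)$ to $\der[1]{\X}(-\log\D)$.
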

This theorem implies, in particular, that any plane curve is a free divisor, but isolated hypersurface singularities in $\CC^n$ for $n \ge 3$ are not free.

\subsection{Log symplectic manifolds}
\label{sec:examples}
Let $\X$ be a complex manifold of dimension $2n$ with $n\ge 1$, and let $\D \subset \X$ be a reduced divisor.  We say that a global logarithmic two-form $\omega \in \cohlgy[0]{\X,\forms[2]{\X}(\log \D)}$ is \defn{nondegenerate} if its top power $\omega^n$ is a nonvanishing section of the line bundle $\can[\X](\D)$, where $\can[\X] = \forms[2n]{\X}$ is the canonical bundle of $\X$.  Equivalently, a logarithmic two-form is nondegenerate if and only if the induced map
\begin{align}
\mapdef{\omega^\flat}{\der[1]{\X}(-\log \D)}{\forms[1]{\X}(\log \D)}
{Z}{\hook{Z}\omega} \label{eqn:res}
\end{align}
is an isomorphism of $\sO{\X}$-modules.

The existence of a nonvanishing section of $\forms[2n]{\X}(\D)$ is a logarithmic analogue of the usual Calabi--Yau condition (triviality of the canonical class).  It is equivalent to the divisor $\D$ being an anticanonical divisor, so we make the following
\begin{definition}
A \defn{log Calabi--Yau manifold} is a pair $(\X,\D)$ of a complex manifold and a reduced effective anticanonical divisor $\D \subset \X$.
\end{definition}

A \defn{log symplectic form} on a log Calabi--Yau manifold  $(\X,\D)$ is a nondegenerate global logarithmic two-form $\omega \in \cohlgy[0]{\X,\forms[2]{\X}(\log\D)}$ that is closed, i.e.~$d\omega = 0$.  We remark that the hypersurface $\D$ is completely determined by the meromorphic two-form $\omega$, since it is the polar divisor of $\omega^n$.  For this reason, we will often refer to the pair $(\X,\omega)$ as a log symplectic manifold, the hypersurface $\D$ being implicit.  We will always assume that $\D$ is nonempty, so symplectic manifolds do not count as log symplectic manifolds in our terminology.

\begin{example}[Log symplectic surfaces]
Let $(\X,\D)$ be a log Calabi--Yau surface (e.g.~a Del Pezzo surface containing an elliptic curve).  Since $\forms[3]{\X}(\log \D)=0$, every logarithmic two-form is closed, so that a log symplectic structure on $(\X,\D)$ is determined by an arbitrary nonvanishing section of the line bundle $\can[\X](\D)$. 
 
When $\X = \CC^2$ with coordinates $(x,y)$, every log symplectic structure can be written as
\[
\omega = \frac{dx\wedge dy}{f}
\]
where $f \in \sO{\CC^2}$ is square-free and $\D$ is its zero locus.  Hence $\D$ can be an arbitrary plane curve, with no constraints on the singularities.

When $(\X,\D)$ is compact, the log symplectic form $\omega$ is unique up to rescaling by a constant. The birational classification of projective log symplectic surfaces can be extracted from the full birational classification of projective surfaces containing an effective anticanonical divisor~\cite{Bartocci2005,Ingalls1998}; it turns out that the only possible singularities for $\D$ are of type $A_1$, $A_2$, $A_3$ or $D_4$.\qed 
\end{example}

\begin{example}[Toric examples]
Let $\X = \CC^{2n}$ with coordinates $(x_1,\ldots,x_{2n})$ and let $\D \subset \X$ be the union of the coordinate axes, so that $\X \setminus \D$ is the complex torus $\T=(\CC^\times)^{2n}$.  Let $\lambda = (\lambda_{ij})_{1 \le i,j\le 2n}$ be a skew-symmetric matrix of complex numbers.  Then the two-form
\[
\omega = \sum_{1\le i<j \le 2n} \lambda_{ij} \frac{dx_i}{x_i}\wedge \frac{dx_j}{x_j}
\]
is closed and logarithmic, and is invariant under the action of $\T$.  It will be nondegenerate, defining a log symplectic form on $(\CC^{2n},\D)$, if and only if the matrix $\lambda$ is nonsingular.  If we compactify $\CC^{2n}$ to a projective space by adding a hyperplane at infinity, we obtain the log symplectic structures characterized in \autoref{thm:lima-pereira}.  Similar structures can also be obtained by choosing other toric compactifications of $\T$.\qed
\end{example}

If $(\X,\omega)$ is a log symplectic manifold and $p\in \D$ is a smooth point of the polar divisor, then a version of the Darboux theorem holds in a neighbourhood of $p$.  Namely, there exist coordinates $(x_1,\ldots,x_n,y_1,\ldots,y_n)$ centred at $p$ in which $\D$ is the zero locus of $x_1$ and
\begin{align}
\omega = \frac{dx_1}{x_1}\wedge dy_1 + \sum_{i=2}^n dx_i\wedge dy_i. \label{eqn:log-Darboux}
\end{align}
For a discussion of the proof, see \cite[Lemma 1.2]{Goto2002} or \cite[Proposition 19]{Guillemin2014}.

Every log symplectic form $\omega$ has an inverse, which is a global logarithmic biderivation
\[
\ps \in \cohlgy[0]{\X,\der[2]{\X}(-\log \D)}\subset \cohlgy[0]{\X,\der[2]{\X}}
\]
with Schouten bracket $[\ps,\ps] = 0$.  Thus $\ps$ defines a defines a Poisson bracket on $\sO{\X}$ by the usual formula
\[
\{f,g\} = \abrac{df\wedge dg,\ps}
\]
for $f,g \in \sO{\X}$.  Moreover, the Pfaffian $\ps^n \in \cohlgy[0]{\X,\der[2n]{\X}}$, which is a section of the anticanonical line bundle, gives a reduced defining equation for $\D$.  From this point of view, $\D$ is the \defn{degeneracy divisor of $\ps$}---the locus where its rank drops.  We will say that a Poisson structure $\ps$ is log symplectic if it is induced by a  log symplectic form in this way.

Conversely, given a generically nondegenerate Poisson structure $\ps$ with a reduced degeneracy divisor $\D$, it is always the case that $\ps \in \cohlgy[0]{\X,\der[2]{\X}(-\log\D)}$.  Indeed, $\D \subset \X$ is a Poisson subspace~(see \cite[Corollary 2.3]{Polishchuk1997} or \cite[Proposition 6]{Gualtieri2013a}), which implies that $\ps$ is tangent to $\D$.  The \defn{anchor map}
\[
\mapdef{\pss}{\forms[1]{\X}}{\der[1]{\X}}
	{\alpha}{\hook{\alpha}\ps}
\]
then extends to an isomorphism $\forms[1]{\X}(\log \D) \to \der[1]{\X}(-\log \D)$, so that $\pss$ may be inverted to obtain a log symplectic form $\omega$.  Notice that this argument works no matter how singular $\D$ is, thanks to the reflexivity of the sheaves involved.  We therefore have the basic
\begin{lemma}
Let $(\X,\D)$ be a log Calabi--Yau manifold.  Then generically symplectic Poisson structures on $\X$ with degeneracy divisor $\D$ are in canonical bijection with log symplectic forms on $(\X,\D)$.
\end{lemma}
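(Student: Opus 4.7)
The plan is to construct two explicit maps between the sets in question and verify that they are mutual inverses. All the essential ingredients are already visible in the paragraphs preceding the statement, so the task is to organise them and to check that the passages between logarithmic and holomorphic sheaves go through.

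First I would construct the forward map, from log symplectic forms to Poisson structures. Given a log symplectic $\omega$, its nondegeneracy means that $\omega^\flat\colon \der[1]{\X}(-\log\D)\to \forms[1]{\X}(\log\D)$ is an isomorphism of $\sO{\X}$-modules; inverting and taking the second exterior power produces a global section $\ps \in \cohlgy[0]{\X,\der[2]{\X}(-\log\D)}\subset \cohlgy[0]{\X,\der[2]{\X}}$. Thus $\ps$ is a genuine holomorphic bivector on $\X$, not merely a meromorphic one. On the dense open set $\X\setminus\D$ the form $\omega$ is an ordinary symplectic form, and by the classical correspondence the condition $d\omega=0$ translates to $[\ps,\ps]=0$ there; since $\der[3]{\X}$ is torsion free and $\X\setminus\D$ is dense, the identity $[\ps,\ps]=0$ extends to all of $\X$. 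Finally, $\omega^n$ is a nonvanishing section of $\can[\X](\D)$, so under the induced isomorphism $\ext[2n]{(\omega^\flat)}$ the Pfaffian $\ps^n$ corresponds to a trivialisation of $\acan[\X](-\D)$, i.e.\ it is a section of the anticanonical bundle which vanishes to order exactly one along each component of $\D$. Hence $\ps$ is generically nondegenerate with reduced degeneracy divisor $\D$.

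For the reverse map, let $\ps$ be a generically nondegenerate Poisson structure whose degeneracy divisor is $\D$. Because $\D\subset\X$ is a Poisson subspace (the cited results of Polishchuk and Gualtieri--Pym), the bivector $\ps$ is tangent to $\D$ in the sense that $\pss$ sends $\sO{\X}(-\D)$ into itself; this is exactly the statement that $\ps \in \cohlgy[0]{\X,\der[2]{\X}(-\log\D)}$. The anchor map therefore induces a sheaf morphism
\[
\pss\colon \forms[1]{\X}(\log\D)\longrightarrow \der[1]{\X}(-\log\D),
\]
which on the open set $\X\setminus\D$ is the usual symplectic isomorphism. Both source and target are reflexive $\sO{\X}$-modules, and $\D_{sing}$ has codimension at least two in $\X$, so once one checks that $\pss$ is an isomorphism on the smooth locus of $\D$ (which follows from the normal-crossings Darboux form \eqref{eqn:log-Darboux} applied to $\ps$, or directly from the reduced vanishing of $\ps^n$ along $\D$), the Hartogs phenomenon for reflexive sheaves upgrades it to an isomorphism on all of $\X$. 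Inverting produces a logarithmic two-form $\omega$ whose top power is a nonvanishing section of $\can[\X](\D)$, and $d\omega=0$ again by density of $\X\setminus\D$ and the Jacobi identity $[\ps,\ps]=0$.

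The two constructions agree with the classical symplectic--Poisson correspondence on $\X\setminus\D$, and hence are mutual inverses there; since both sides of the bijection embed in the sections of reflexive sheaves on $\X$, these identifications extend uniquely to all of $\X$. This establishes the claimed canonical bijection.

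The only genuinely non-formal point is the extension of the anchor map isomorphism across the singular locus of $\D$. This is where one really uses the machinery of Saito's paper recalled above, namely the reflexivity of $\forms[1]{\X}(\log\D)$ and $\der[1]{\X}(-\log\D)$ and the codimension bound $\dim\D_{sing}\le \dim\X-2$; without these, the argument would only produce a log symplectic form off a codimension-two set and one would have to argue separately that the degeneracy locus of $\ps$ cannot have codimension-two components.
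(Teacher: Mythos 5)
Your argument is correct and follows essentially the same route as the paper: the paper's (implicit) proof is exactly the combination of $\D$ being a Poisson subspace, so that $\ps$ is logarithmic, together with the reflexivity of $\forms[1]{\X}(\log\D)$ and $\der[1]{\X}(-\log\D)$ to extend the anchor isomorphism across $\D_{sing}$, with integrability and closedness transferred by density of $\X\setminus\D$. Your write-up merely makes explicit the Hartogs-type extension and the check on the smooth locus of $\D$ that the paper leaves implicit.
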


Recall that for any Poisson bivector field $\ps$, the image of the anchor map $\pss : \forms[1]{\X} \to \der[1]{\X}$ is an involutive subsheaf of $\der[1]{\X}$ and gives rise to a foliation of $\X$ by even-dimensional symplectic leaves.  For a log symplectic structure on a connected complex manifold $\X$ of dimension $2n$, the open complement $\X \setminus \D$ is a symplectic leaf of dimension $2n$.  Meanwhile, the smooth locus of $\D$ is foliated by leaves of dimension $2n-2$; this foliation is defined by the kernel of the residue one-form $\Res \omega$, which is closed.  Then, the singular locus $\D_{sing}$ is foliated by leaves of dimension $2n-2$ or less.  Notice that having an open symplectic leaf is not enough to guarantee that $\ps$ is log symplectic, since the latter is possible even when the Pfaffian is not reduced.

We close this section with a description of the symplectic leaves of one of the elliptic log symplectic structures that will be the focus of \autoref{sec:ell-ex}.
\begin{example}\label{ex:e6tilde}
Let $(w,x,y,z)$ be coordinates on $\CC^4$ and let $\eta,\nu \in \CC$ be constants.  Then the Poisson brackets
\begin{align*}
\{w,x\} &= x &  \{y,z\} &= \eta\, x^2+\nu\, yz \\
\{w,y\} &= y & \{z,x\} &= \eta\, y^2+\nu\, zx \\
\{w,z\} &= z & \{x,y\} &= \eta\, z^2+\nu\, xy
\end{align*}
define a log symplectic structure whose degeneracy divisor $\D \subset \CC^4$ is given by the zeros of the cubic function
\[
f = \tfrac{1}{3}\eta(x^3+y^3+z^3) + \nu\, xyz.
\]
Thus, for generic values of $\eta$ and $\nu$, the hypersurface is a product $\D = \CC \times \D_0$, where $\D_0\subset \CC^3_{x,y,z}$ is the cone over an elliptic curve  $\E \subset \PP[2]$.  The singular locus $\D_{sing}\subset \D$ is a subscheme of multiplicity 8 supported on the line $x=y=z=0$. 

The complement $\CC^4\setminus \D$ is a symplectic leaf of dimension four, and the individual points of $\D_{sing}$ are zero-dimensional leaves.  The two-dimensional symplectic leaves give a regular foliation of the smooth locus $\D_{reg} = \D \setminus \D_{sing}$, which has the following description. Since $\D_{0}$ is the cone over $\E$ and $\D = \CC \times \D_0$,  we have a natural projection $\pi : \D_{reg} \to \CC \times \E$ whose fibres are copies of $\CC^\times$.  There is then a unique non-vanishing one-form $\alpha \in \cohlgy[0]{\E,\forms[1]{\E}}$ such that
\[
\Res \omega = \pi^*(p_1^* dw - p_2^* \alpha) \in \forms[1]{\D_{reg}},
\]
where $w$ is the coordinate on $\CC$ as above, and the maps $p_1 : \CC\times \E \to \CC$ and $p_2 : \CC\times \E \to \E$ are the projections.

Let $Z = \alpha^{-1} \in \cohlgy[0]{\E,\der[1]{\E}}$ be the vector field dual to $\alpha$.  Then $Z$ generates an action of the additive group $(\CC,+)$ on $\E$ by translations in the group law of the elliptic curve.  Combining this action with the standard translation action on $\CC$, we obtain the diagonal action of $(\CC,+)$ on the product $\CC\times \E$.  The symplectic leaves of $\D_{reg}$ are precisely the preimages of the orbits of this action under the projection $\pi : \D_{reg} \to \CC\times \E$. \qed
\end{example}

\subsection{Stability under deformations} \label{sec:irred}

For a compact complex manifold  $\X$, we denote by $\Pois{\X} \subset \cohlgy[0]{\X,\der[2]{\X}}$ the space of Poisson structures on $\X$.  It is the algebraic subvariety consisting of those sections $\ps$ that satisfy the integrability condition $[\ps,\ps] = 0 \in \cohlgy[0]{\X,\der[3]{\X}}$.  This condition amounts to a system of homogeneous quadratic equations on the finite dimensional vector space $\cohlgy[0]{\X,\der[2]{\X}}$.  The description of the irreducible components of $\Pois{\X}$ is a difficult problem and has important implications for noncommutative geometry.  We observe that some components of this variety are defined by log symplectic structures:
\begin{lemma}
If $\X$ is a compact complex manifold of dimension $2n$, then the set of log symplectic structures on $\X$ forms a (possibly empty) $\CC^\times$-invariant Zariski open subset  of $\Pois{\X}$.  Hence its closure is a union of irreducible components of $\Pois{\X}$.
\end{lemma}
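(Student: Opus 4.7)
My plan is to reduce the log symplectic condition to an openness statement about the linear system of $\acan[\X]$, via the Pfaffian map. I would consider the polynomial, hence algebraic, morphism of finite-dimensional affine varieties
\[
\Pf : \cohlgy[0]{\X,\der[2]{\X}} \to \cohlgy[0]{\X,\acan[\X]}, \qquad \ps \mapsto \ps^n.
\]
By the lemma immediately preceding the statement, a Poisson structure $\ps \in \Pois{\X}$ is log symplectic precisely when $\Pf(\ps)$ is a nonzero section whose zero scheme is a reduced and nonempty divisor. It therefore suffices to show that the set
\[
U = \{\, s \in \cohlgy[0]{\X,\acan[\X]} : s \ne 0 \text{ and the zero scheme of } s \text{ is reduced and nonempty} \,\}
\]
is Zariski open.

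The central step is openness of the reduced-fibre condition, which I would handle via fibre-dimension semicontinuity. On the product $\cohlgy[0]{\X,\acan[\X]} \times \X$, I would form the universal zero scheme $\mathcal{D}$, cut out by the tautological section of $\acan[\X]$ pulled back to the product, together with its relative singular subscheme $\mathcal{D}_{\mathrm{sing}}$, cut out by $\mathcal{D}$ and the first-order jets along $\X$; both are closed. For nonzero $s$, the zero divisor of $s$ fails to be reduced exactly when some codimension-one point of $\X$ lies in its singular subscheme, i.e.\ when the fibre $(\mathcal{D}_{\mathrm{sing}})_s$ has dimension $\dim \X - 1$; upper semicontinuity of fibre dimension then shows that the non-reduced locus is Zariski closed in $\cohlgy[0]{\X,\acan[\X]} \setminus \{0\}$. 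Finally, if $\acan[\X]$ is nontrivial, every nonzero global section vanishes somewhere, so $U$ is the intersection of two Zariski open conditions; if $\acan[\X]$ is trivial, every nonzero section is a nonvanishing constant and $U$ is empty. In either case $U$ is Zariski open.

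To conclude, I would pull back $U$ under $\Pf$ and intersect with $\Pois{\X}$: the log symplectic Poisson structures form $\Pf^{-1}(U) \cap \Pois{\X}$, a Zariski open subset of $\Pois{\X}$. The $\CC^\times$-invariance is immediate, since $(c\ps)^n = c^n\ps^n$ has the same zero divisor as $\ps^n$ for any $c \in \CC^\times$. The final assertion then follows from the general fact that the Zariski closure of a nonempty open subset of an algebraic variety is the union of the irreducible components it meets. The only technical obstacle is the openness of reducedness within a linear system; the fibre-dimension argument sketched above is clean, but one could equivalently invoke the openness of geometrically reduced fibres in the flat family $\mathcal{D} \to \cohlgy[0]{\X,\acan[\X]} \setminus \{0\}$ (EGA IV 12.1).
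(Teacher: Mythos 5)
Your proposal is correct and follows essentially the same route as the paper: pass through the Pfaffian map $\ps \mapsto \ps^n$ into the anticanonical linear system and observe that the reduced (nonempty) divisors form a Zariski open subset there. The paper simply asserts this last openness, whereas you supply a justification via the universal singular subscheme and semicontinuity of fibre dimension, and you also treat the degenerate case of trivial $\acan[\X]$ explicitly; both are reasonable additions but do not change the argument.
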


\begin{proof}
Let $\E \subset \cohlgy[0]{\X,\der[2]{\X}}$ be the closed subset consisting of those sections $\ps$ such that $\ps^n=0$.  There is a natural map $\phi : \cohlgy[0]{\X,\der[2]{\X}} \setminus \E  \to \PP(\cohlgy[0]{\X,\der[2n]{\X}})$ given by $\ps \mapsto [\ps^n]$.  The projective space $\PP\rbrac{\cohlgy[0]{\X,\der[2n]{\X}}}$ parametrizes effective anticanonical divisors on $\X$, and the reduced divisors form a Zariski open subset $\U$.  Then the intersection of $\Pois{\X}$ with the open set $\phi^{-1}(\U)$ gives the set of log symplectic structures on $\X$.
\end{proof}

\section{Local structure of log symplectic manifolds}
\label{sec:local}

With the examples of the previous section in mind, we now develop some methods for studying the local and global structure of a log symplectic manifold with a singular degeneracy hypersurface.

\subsection{Gorenstein singular loci}
\label{sec:gorenstein}
The first step is to understand the simpler question of which hypersurfaces $\D \subset \X$ admit  a nondegenerate logarithmic two-form $\omega \in \forms[2]{\X}(\log\D)$---or, equivalently, a nondegenerate logarithmic biderivation $\ps \in \der[2]{\X}(-\log\D)$)---without worrying about the integrability condition $d \omega=0$.   Clearly, if $\D$ is a free divisor and $\dim \X = 2n$ is even, then such a logarithmic two-form exists locally.  Indeed, in a sufficiently small open set, we may simply take an $\sO{\X}$-module basis $\alpha_1,\ldots,\alpha_n,\beta_1,\ldots,\beta_n \in \forms[1]{\X}(\log \D)$ and set
\[
\omega = \alpha_1\wedge \beta_1 + \cdots + \alpha_n \wedge \beta_n.
\]
Of course, such a form will not, in general, be closed, and may not extend to a global logarithmic two-form.

Meanwhile, the log symplectic structure in \autoref{ex:e6tilde} has a degeneracy hypersurface whose singular locus has codimension three in the ambient space.  In light of the Jacobian criterion for freeness (\autoref{thm:jac-free}), such a hypersurface cannot be a free divisor.  Nevertheless, this criterion has an analogue in the skew-symmetric setting, which we now explain.

Let $(\X,\D)$ be a connected log Calabi--Yau manifold of dimension $2n$, and consider the anticanonical line bundle $\acan[\X] = \der[2n]{\X} \cong \sO{\X}(-\D)$. Let $\sA$ be its Atiyah algebroid, i.e.~the sheaf of first order differential operators on $\acan[\X]$.  Thus $\sA$ fits into an exact sequence
\begin{align}
\xymatrix{
0 \ar[r] & \sO{\X} \ar[r] & \sA \ar[r] & \der[1]{\X}\ar[r]  & 0, 
}\label{eqn:atiyah}
\end{align}
and splittings of this sequence of Lie algebroids are in bijective correspondence with flat connections on $\acan[\X]$.  Sections of $\ext[2]{\sA}$ can then be interpreted as skew-symmetric bidifferential operators $\acan[\X] \times \acan[\X] \to (\acan[\X])^{\otimes 2}$, and the second exterior power of \eqref{eqn:atiyah} gives a symbol map $\ext[2]{\sA} \to \der[2]{\X}$.

Given a nondegenerate logarithmic biderivation $\ps \in \cohlgy[0]{\X,\der[2]{\X}(-\log\D)}$ there is a canonical section 
\[
\ps_\sA \in \cohlgy[0]{\X,\ext[2]{\sA}}
\]
whose symbol is $\ps$.  This section can be described as follows: choose a local trivialization $\mu \in \acan[\X]$ and write $\ps^n = h\mu$ for a function $h \in \sO{\X}(-\D)$.  Consider the vector field
\begin{align}
Z = h^{-1}\hook{dh}\ps \in \der[1]{\X}(-\log \D). \label{eqn:Zdef}
\end{align}
Then the bidifferential operator $\ps_\sA$ may be defined by the formula
\[
\ps_\sA( f \mu , g \mu ) = \rbrac{\abrac{df\wedge dg,\ps}+gZ(f)-fZ(g)} \mu^2 \in (\acan[\X])^2
\]
for $f,g \in \sO{\X}$.  One can check that this definition is independent of the choice of local trivialization.

\begin{theorem}[\cite{Gualtieri2013a}]\label{thm:codim3}
With the notations above, the singular locus $\D_{sing} \subset \D$ is identified, as an analytic subspace of $\X$, with the vanishing locus of the top Pfaffian $\ps_\sA^n \in \cohlgy[0]{\X,\ext[2n]{\sA}}$.  As a result, the following statements hold:
\begin{enumerate}
\item Every irreducible component of $\D_{sing}$ has dimension $\ge 2n-3$.
\item The polynomial $c_1c_2-c_3\in \cohlgy[6]{\X,\ZZ}$ in the Chern classes of $\X$ is supported on $\D_{sing}$.
\item If $\dim \D_{sing} = 2n-3$, then $\D_{sing}$ is Gorenstein with dualizing sheaf $\acan[\X]|_{\D_{sing}}$.  Moreover, there is a canonical locally free resolution of $\sO{\D_{sing}}$ having the form
\[
\xymatrix{
0 \ar[r] & \can[\X]^2 \ar[r]^-{\ps_\sA^n} & \can[\X]\otimes \sA^\vee \ar[r]^-{\pss_\sA} & \can[\X]\otimes \sA \ar[r]^-{\ps_\sA^n} & \sO{\X} \ar[r] & \sO{\D_{sing}} \ar[r] & 0
}
\]
If, in addition, $\X$ is compact, then the fundamental class of the singular locus is $[\D_{sing}] = c_1c_2-c_3 $.
\end{enumerate}
\end{theorem}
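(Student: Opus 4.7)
The strategy is to reduce every assertion to an explicit local computation in which $\ps_\sA$ is represented by a $(2n+1)\times(2n+1)$ skew-symmetric matrix, and then to invoke the Buchsbaum--Eisenbud structure theorem for Gorenstein ideals of codimension three.

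The first step, and the heart of the argument, is to identify the ideal generated by $\ps_\sA^n$ with the Jacobian ideal of $\D$.  Working in a neighbourhood of a point of $\X$, I would choose a local trivialisation $\mu$ of $\acan[\X]$ together with a splitting of the Atiyah sequence \eqref{eqn:atiyah}, so that $\sA \cong \sO{\X}\oplus \der[1]{\X}$ and $\ps^n = f\mu$ for a reduced local equation $f$ of $\D$.  In this framing, the modular vector field $Z = f^{-1}\hook{df}\ps$ from \eqref{eqn:Zdef} occupies the Atiyah row and column of the matrix of $\ps_\sA$, while $\ps$ itself occupies the remaining $2n\times 2n$ block.  Viewing $\ps_\sA^n$ through the isomorphism $\ext[2n]{\sA}\cong \acan[\X]\otimes \sA^{\vee}$ makes it a morphism $\sA \to \acan[\X]$ whose $2n+1$ components are the principal Pfaffians of this matrix.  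The principal Pfaffian obtained by deleting the Atiyah direction is literally $\Pf(\ps) = f$; expanding each remaining principal Pfaffian along its top row and using the classical cofactor identity $\Pf([\ps]_{\hat\imath,\hat k}) = \pm\,\Pf(\ps)\cdot (\ps^{-1})_{ki}$ together with the defining relation $\pss(df) = fZ$ shows that the other principal Pfaffians are, up to sign, the partial derivatives $\partial_k f$.  The ideal generated by $\ps_\sA^n$ is therefore the Jacobian ideal $(f,\partial_1 f,\ldots,\partial_{2n}f)$, which by definition cuts out $\D_{sing}$.

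With this identification in hand, the remaining claims are structural.  For (1), the Buchsbaum--Eisenbud theorem gives an \emph{a priori} bound of three on the grade of any proper ideal generated by the principal Pfaffians of a skew-symmetric matrix, so no irreducible component of $\D_{sing}$ can have codimension greater than three in $\X$.  For (3), when the codimension is exactly three, the same theorem asserts that $\sO{\D_{sing}}$ is Gorenstein and that the displayed self-dual Pfaffian complex is an exact resolution; the dualizing sheaf can then be computed as $\mathrm{Ext}^3_{\sO{\X}}(\sO{\D_{sing}},\can[\X])$ from the resolution, and the self-duality of the complex together with a twist by $\can[\X]$ yields $\acan[\X]|_{\D_{sing}}$.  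For (2) and the fundamental class formula, I would compute the Chern character of the alternating sum of the four terms in the complex: the Atiyah sequence gives $c(\sA) = c(\tshf{\X})$, and a direct expansion using $\mathrm{ch}(\can[\X]) = e^{-c_1}$ shows that the contributions in degrees $0$, $1$, and $2$ cancel identically while the degree-$3$ contribution equals exactly $c_1 c_2 - c_3$.  Because the complex is exact off $\D_{sing}$, its class in K-theory is supported there, which proves (2); when the complex is an exact resolution, the standard identity $\mathrm{ch}_r(\sO{Y}) = [Y]$ for a Cohen--Macaulay subvariety $Y$ of codimension $r$ then delivers $[\D_{sing}] = c_1 c_2 - c_3$.

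The main obstacle is the local identification itself.  One must verify both that the correction by the modular vector field renders $\ps_\sA$ independent of the chosen trivialisation of $\acan[\X]$ and splitting of \eqref{eqn:atiyah}, and then carry out the Pfaffian--cofactor manipulation that matches the principal Pfaffians against the Jacobian generators; everything afterwards is an application of well-known structural results and a routine Chern-class expansion.
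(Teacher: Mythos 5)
Your proposal is correct and follows essentially the same route as the paper: your matrix--cofactor computation of the principal Pfaffians is precisely the paper's identity $\ps_\sA^n = (\ps^n,\,nZ\wedge\ps^{n-1}) = (h\mu,\,\hook{dh}\mu)$ written in coordinates, identifying the vanishing locus of $\ps_\sA^n$ with the Jacobian ideal of $\D$. The remaining assertions are exactly what the paper defers to the cited reference, and your route through the Buchsbaum--Eisenbud structure theorem, the $\mathrm{Ext}^3$ computation of the dualizing sheaf, and the Chern-character expansion of the Pfaffian complex is the standard (and intended) one.
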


\begin{proof}This theorem was proved in \cite{Gualtieri2013a} under the assumption that $\ps$ was a Poisson structure, but the integrability condition is not actually used in the proof.  So, we shall omit most of the proof here, and simply recall why the Pfaffians define the singular locus.

A local trivialization $\mu \in \acan[\X]$ gives a splitting $\sA \cong \der[1]{\X}\oplus\sO{\X}$ of \eqref{eqn:atiyah}, and hence a decomposition $\ext[2n]{\sA} \cong \der[2n]{\X} \oplus \der[2n-1]{\X}$.  With respect to this decomposition, we may write $\ps_\sA^n = (\ps^n,n Z\wedge \ps^{n-1})$ with $Z$ as defined in \eqref{eqn:Zdef}.  But
\begin{align*}
Z \wedge \ps^{n-1} &=  (h^{-1}\hook{dh}\ps)\wedge \ps^{n-1} \\
&= \tfrac{1}{n}h^{-1}\hook{dh}\ps^n\\
&= \tfrac{1}{n}\hook{dh}\mu
\end{align*}
and hence $\ps_{\sA}^n = (h\mu,\hook{dh}\mu)$.  Since $\mu$ is nonvanishing, the zero locus of $\ps_{\sA}^n$ coincides with the simultaneous vanishing locus of $h$ and $dh$, which is precisely the singular locus of $\D$, as claimed.
\end{proof}

Locally, this theorem has a partial converse:
\begin{proposition}\label{prop:codim3-converse}
Let $\D \subset (\CC^{2n},0)$ be a reduced hypersurface germ.  Suppose that the singular locus of $\D$ is Gorenstein of pure dimension $2n-3$.  Then there exists a nondegenerate logarithmic biderivation $\ps \in \der[2]{\CC^2,0}(-\log \D)$.
\end{proposition}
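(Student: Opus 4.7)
The plan is to invoke the Buchsbaum-Eisenbud structure theorem for Gorenstein codimension-three ideals in a regular local ring, and use it to produce a skew-symmetric matrix whose Pfaffian structure encodes the desired bivector. Set $R = \sO{(\CC^{2n},0)}$, let $f \in R$ be a reduced defining equation for $\D$, and let $\sJ = (f, \partial_1 f, \ldots, \partial_{2n} f) \subset R$ be the Jacobian ideal defining $\D_{sing}$. By hypothesis, $R/\sJ$ is Gorenstein of codimension three, so Buchsbaum-Eisenbud produces a free resolution
\[
0 \to R \to R^{2n+1} \xrightarrow{M} R^{2n+1} \xrightarrow{v} R \to R/\sJ \to 0
\]
in which $M$ is a skew-symmetric $(2n+1)\times(2n+1)$ matrix over $R$ and the row $v$ records the maximal sub-Pfaffians of $M$. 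To go from the minimal Pfaffian resolution, of size $2k_0+1\le 2n+1$, to one of size exactly $2n+1$, I would augment by direct summands corresponding to trivial two-term complexes; this leaves the Pfaffian structure intact while adding zero entries to $v$.

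The central technical step is to choose this resolution so that, after a suitable change of basis of $R^{2n+1}$ respecting the Atiyah-algebroid filtration $0 \subset \sO \subset \sA$, the sub-Pfaffian vector becomes $v = (f, -\partial_1 f, \ldots, -\partial_{2n} f)$ up to an overall unit. This is possible because these $2n+1$ specific elements already form a generating set of $\sJ$, so their coefficients in any minimal generating set assemble into an $(2n+1)\times(2k_0+1)$ matrix whose image spans $\sJ/\mathfrak{m}\sJ$; the extra freedom provided by the augmented zero entries of $v$ allows one to complete this data to an element of $GL_{2n+1}(R)$ that preserves the distinguished line $\sO\cdot e_0 \subset \sA$ in the standard splitting of the Atiyah sequence.

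With such an $M$ in hand, I write it in block form
\[
M = \begin{pmatrix} 0 & -Z^T \\ Z & P \end{pmatrix},
\]
where $P$ is a skew $2n\times 2n$ matrix and $Z$ is a column vector of length $2n$, and define $\ps = \sum_{i<j} P_{ij}\,\partial_i \wedge \partial_j \in \der[2]{\CC^{2n},0}$. Nondegeneracy is immediate from $\Pf(P) = v_0 = f$, which gives $\ps^n = f\cdot\partial_1\wedge\cdots\wedge\partial_{2n}$, a generator of $\acan[\X](\D)$ whose zero divisor is exactly $\D$. For the tangency of $\ps$ to $\D$ — equivalently, the logarithmicity of the associated two-form — I expand the Buchsbaum-Eisenbud relation $M v^T = 0$ (which follows from the exactness of the resolution combined with the skew-symmetry of $M$) in its $i$-th row for $i \ge 1$: the result is $-Z_i f + \sum_{j \ge 1} P_{ij}(-\partial_j f) = 0$, equivalently $\iota_{df}\ps = -f\cdot Z \in (f)\,\der[1]{\X}$, showing that $\ps$ preserves the ideal $(f)$.

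The main obstacle is carrying out the simultaneous augmentation and change of basis in a way compatible with the filtration $0 \subset \sO \subset \sA$, so that the normalized sub-Pfaffian vector $(f, -\partial_1 f, \ldots, -\partial_{2n} f)$ is produced by a matrix still having the required block structure in the standard basis adapted to this splitting. This hinges on the observation that a vector $\beta \in R^{2n+1}$ with $\sum \beta_j v_j = f$ can be chosen to be unimodular by using a unit entry among the augmented zero slots, which then lets one complete $\beta$ to a basis respecting the filtration. Apart from this essentially bookkeeping step, the proof draws only on Buchsbaum-Eisenbud and the elementary Pfaffian identities recalled in the proof of \autoref{thm:codim3}.
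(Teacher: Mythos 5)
Your proposal is correct and follows essentially the same route as the paper: apply the Buchsbaum--Eisenbud structure theorem to the codimension-three Gorenstein ideal of $\D_{sing}$, pad the minimal Pfaffian resolution by a trivial (nondegenerate) summand so that its presentation matches the rank-$(2n+1)$ map $(f,df)\colon \sO{}\oplus\der[1]{}\to\sO{}$, and read off the bivector as the symbol of the resulting skew form, with nondegeneracy coming from the identity $\Pf(P)=v_0=f$ and logarithmicity from the complex relation $Mv^T=0$. The one point you single out as the ``main obstacle''---arranging the change of basis to respect the splitting $\sO{}\subset\sA$---is not actually an obstacle: every skew-symmetric matrix already has the block form you need (the $(0,0)$ entry vanishes automatically and extracting $P$ is just the symbol map), and the identification of the padded resolution with the $(f,df)$ presentation is precisely the standard uniqueness of minimal free resolutions over a local ring, which is how the paper dispatches this step.
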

\begin{proof}
Let $\sO{} = \sO{\CC^{2n,0}}$ be the ring of germs of analytic functions and denote by $\der[1]{} = \der[1]{\CC^{2n},0}$ be the $\sO{}$-module of vector fields.  
Let $h \in \sO{}$ be a defining equation for $\D$, and let $\sO{\D_{sing}}$ be the ring of functions on the singular locus.  Setting $\sA = \sO{} \oplus \der[1]{}$, the map
$(h,dh): \sA \to \sO{}$ has cokernel $\sO{\D_{sing}}$, giving the beginning of a free resolution of $\sO{\D_{sing}}$.  Its kernel is identified with $\der[1]{}(-\log\D)$ by the projection $\sA \to \der[1]{}$.

On the other hand, by the Buchsbaum--Eisenbud structure theorem~\cite{Buchsbaum1977} for codimension three Gorenstein ideals, there exist an integer $k >0$, a free module $\sE$ of rank $2k+1$, and a skew-symmetric form $\rho\in\ext[2]{\sE}$, such that the minimal free resolution of $\sO{\D_{sing}}$ has the form
\[
\xymatrix{
0 \ar[r] & (\det{\sE^\vee})^2 \ar[r]^-{\rho^k} & \sE^\vee \ar[r]\otimes \det{\sE^\vee} \ar[r]^{\rho \otimes 1} & \sE \otimes \det{\sE^\vee} \ar[r]^-{\rho^k} & \sO{}
}
\]
where the outer maps are the Pfaffians $\rho^{k} \in \ext[2k]{\sE} \cong \sE^\vee \otimes \det \sE$.  Therefore, by minimality, there exists a free module $\sF$ equipped with an isomorphism $\sA \cong (\sE \otimes \det{\sE^\vee}) \oplus \sF$ that identifies the submodule $\der[1]{}(-\log \D) \subset \sA$ with $\image \rho \oplus \sF$.

Since the ranks of $\sA$ and $\sE$ are odd, the rank of $\sF$ must be even.  Hence we may choose a nondegenerate skew form $\rho' \in \ext[2]{\sF}$.  Under the isomorphism $\sA \cong \sE\oplus \sF$, the sum $\ps_\sA = \rho + \rho'$ defines an element of $\ext[2]{\sA}$, and we obtain a free resolution
\[
\xymatrix{
0 \ar[r] & \sO{} \ar[r]^{(h,dh)^t} & \sA^\vee \ar[r]^{\ps_\sA} & \sA \ar[r]^{(h,dh)} & \sO{}
}
\]
of $\sO{\D_{sing}}$.  Let $\ps $ be the image of $\ps_{\sA}$ under the projection $\ext[2]{\sA} \to \der[2]{}$.  The fact that the sequence above is a complex implies that $\ps$ lies in $\der[2]{\X}(-\log\D)$.  Moreover, the map $(h,dh)$ is identified with the Pfaffian $\ps_\sA^n$ under the isomorphism $\ext[2n]{\sA} \cong \der[2n]{} \oplus \der[2n-1]{} \cong \sO{} \oplus \forms[1]{}$ given by an appropriate choice of volume form.  Hence $h^{-1}\ps^n$ trivializes $\der[2n]{}$, so that $\ps$ is nondegenerate.
\end{proof}

\begin{example}
Let $f \in \CC[x,y,z]$ be a quasi-homogeneous polynomial with weights $(a,b,c) \in \ZZ_{>0}^3$ and suppose that $0$ is the only critical point of $f$.  Let $\D_0 \subset \CC^3$ be the zero locus of $f$.  Because of the quasihomogeneity, the singular locus of the product $\D = \CC \times \D_0 \subset \CC^4$ is the complete intersection defined by the equations $\cvf{x}f=\cvf{y}f=\cvf{z}f = 0$, and hence it is Gorenstein.

Correspondingly, $\D$ supports a nondegenerate logarithmic biderivation, an example of which may be constructed as follows.  Let $E = ax\cvf{x}+by\cvf{y}+cz\cvf{z}$ be the weighted Euler vector field, and extend the coordinates $(x,y,z)$ on $\CC^3$ to a coordinate system $(w,x,y,z)$ on $\CC^4$.  Then one readily computes that
\[
\ps = E \wedge \cvf{w} + \hook{df} (\cvf{x}\wedge \cvf{y}\wedge\cvf{z}) \in \der[2]{\CC^4}(-\log \D)  
\]
is a nondegenerate logarithmic biderivation.  However, it will not, in general be integrable.  In fact, we will see in \autoref{sec:ell-ex} that $\D$ supports a log symplectic form if and only if the degree of $f$ is equal to $a+b+c$. \qed
\end{example}


\subsection{The local Moser trick}
\label{sec:normal}

We now return to the integrable case, in which the nondegenerate logarithmic form is closed.  Clearly if $(\X,\omega)$ and $(\X',\omega')$ are log symplectic structures that are isomorphic in neighbourhoods of points $p \in \X$ and $p' \in \X'$, then their degeneracy hypersurfaces $\D$ and $\D'$ are also isomorphic in the same neighbourhoods.  In other words, the singularity type of $\D$ at $p$ is a local invariant of the log symplectic structure.  In this section, we study log symplectic structures for which the underlying singularity type is fixed, and find local analogues of several standard cohomological properties of \emph{compact} symplectic manifolds, and log symplectic manifolds with smooth degeneracy hypersurfaces~\cite{Guillemin2014}.

For a reduced hypersurface germ $\D\subset (\X,p)$, let $\D=\D_1\cup\cdots\cup\D_k$ be a decomposition into irreducible components, and let $h_1,\ldots,h_k$ be defining equations for the components.  If the $\sO{\X,p}$-module $\forms[1]{\X}(\log \D)_p$ is generated by the meromorphic forms $f_1^{-1}df_1,\ldots,f_k^{-1}f_k$ and the holomorphic forms $\forms[1]{\X}$, we will say that $\forms[1]{\X}(\log \D)_p$ is \defn{generated by logarithmic differentials}.  Saito has shown that $\forms[1]{\X}(\log\D)_p$ is generated by logarithmic differentials if and only if each component $\D_i$ is normal and the intersections of the components are sufficiently transverse; see \cite[Theorem 2.9]{Saito1980} for the exact formulation.  These conditions are satisfied, in particular, if $\D$ is smooth, normal, or has normal crossings singularities.

Our results pertain to the \defn{local logarithmic de Rham cohomology} at a point $p \in \D_{sing}$, which is the stalk cohomology $\cohlgy{\X,\log\D}[dR,p]$ of the complex of sheaves $(\forms{\X}(\log \D),d)$ at $p$. 

\begin{proposition}
Let $(\X,\D,\omega)$ be a log symplectic manifold of dimension $2n$, and let $p \in \D_{sing}$ be a singular point at which $\D$ is normal. (Hence $\dim \D_{sing} = 2n-3$ at $p$ by \autoref{thm:codim3}.)  Then the local logarithmic cohomology class
\[
[\omega] \in \cohlgy[2]{\X,\log \D}[dR,p]
\]
is nonzero.
\end{proposition}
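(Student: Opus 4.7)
The plan is to express $[\omega]$ in terms of its residue and then argue by contradiction. Saito's short exact sequence of complexes
\[
0 \to \forms{\X} \to \forms{\X}(\log\D) \xrightarrow{\ \Res\ } \Rforms[\bullet - 1]{\D} \to 0
\]
from \autoref{sec:logforms} induces a long exact sequence of stalk hypercohomology sheaves at $p$. Since $\X$ is smooth, the holomorphic Poincar\'e lemma kills $\cohlgy[k]{\X}[dR,p]$ for $k \geq 1$, so the connecting map specializes to a canonical isomorphism
\[
\Res \colon \cohlgy[2]{\X, \log\D}[dR,p] \xrightarrow{\ \sim\ } \cohlgy[1]{\D, reg}[p].
\]
The statement therefore reduces to showing that $[\Res\omega] \neq 0$ in the first local regular de Rham cohomology of $\D$ at $p$.

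Suppose for contradiction that $\Res\omega = df$ for some $f \in \sO{\D,p}$. At any smooth point $q$ of $\D$ near $p$, the logarithmic Darboux theorem \eqref{eqn:log-Darboux} applies: in coordinates $(x_1,y_1,\dots,x_n,y_n)$ with $\D = \{x_1 = 0\}$, we find $\Res\omega = dy_1|_\D$, so $f$ agrees with the transverse Darboux coordinate $y_1$ modulo a constant. In particular $df$ is nowhere vanishing on the smooth locus $\D_{reg}$, and the fibres of $f|_{\D_{reg}}$ are exactly the $(2n-2)$-dimensional symplectic leaves of the foliation $\ker\Res\omega$. The same Darboux calculation also shows that the vector field $Z = h^{-1}\iota_{dh}\ps$, which is logarithmic and tangent to $\D$, reduces at smooth points to $\cvf{y_1}$, so $\iota_{Z|_\D}(\Res\omega) \equiv 1$ on $\D_{reg}$. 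By density, the assumption $\Res\omega = df$ therefore forces $Z(f) \equiv 1$ on the whole germ of $\D$ at $p$.

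The hardest part of the proof is converting these constraints into a concrete contradiction at the singular point $p$. The approach I would pursue is to detect a non-vanishing period of $\Res\omega$ along a cycle $\gamma$ in $\D_{reg}$ arising from the nontrivial local topology near the codimension-three singular locus: because $\D_{sing}$ is Gorenstein of codimension three by \autoref{thm:codim3}, the complement $\D_{reg}$ near $p$ always carries enough topology to host a loop whose period under $\Res\omega$ is nonzero, thereby contradicting exactness. Concretely, I expect this period argument to match the computation implicit in \autoref{ex:e6tilde}, in which the residue pulls back to a sum of $dw$ and a non-exact holomorphic $1$-form on an elliptic curve; the main subtlety will be to carry this out uniformly, using only the abstract Gorenstein structure of $\D_{sing}$ guaranteed by \autoref{thm:codim3} and not an explicit classification of the possible singularities.
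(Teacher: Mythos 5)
Your reduction through the residue sequence is sound: since the holomorphic de Rham complex is locally exact in positive degrees, the vanishing of $[\omega]$ in $\cohlgy[2]{\X,\log\D}[dR,p]$ is indeed equivalent to the exactness of $\Res\omega$ in the regular de Rham complex of $\D$ at $p$, and this is the same starting point as the paper's proof (which writes $\omega = d\alpha$ and uses that, $\D$ being normal, every logarithmic one-form is $g\,h^{-1}dh+\beta$ with $g,\beta$ holomorphic). But the step you propose to finish with is a genuine gap. The assertion that ``$\D_{reg}$ near $p$ always carries enough topology to host a loop whose period under $\Res\omega$ is nonzero'' is precisely the statement to be proved, and nothing in your argument supports it: the Gorenstein property of $\D_{sing}$ is a purely algebraic condition on the local ring and does not by itself produce a cycle, let alone one on which this particular one-form has a nonzero period. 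Moreover, at this point in the paper no classification of the singularity is available --- the proposition is an ingredient in \emph{deriving} the elliptic normal form --- so you cannot borrow the elliptic-curve topology visible in \autoref{ex:e6tilde}. Your intermediate observations ($df$ nowhere zero on $\D_{reg}$, $Z(f)\equiv 1$) are correct but inert: neither produces a contradiction at $p$.

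The missing step is in fact elementary and exploits nondegeneracy at $p$ rather than topology. Suppose $\Res\omega = df$ with $f\in\sO{\D,p}$ (legitimate, since $\D$ normal gives $\Rforms[0]{\D}=\sO{\D}$); lift $f$ to a holomorphic germ $F\in\sO{\X,p}$ and let $h$ be a reduced equation for $\D$. Then $\omega - h^{-1}dh\wedge dF$ is a closed logarithmic two-form with vanishing residue, hence a holomorphic two-form $\eta$, so
\[
h\,\omega^n \;=\; n\, dh\wedge dF\wedge \eta^{n-1} \;+\; h\,\eta^n,
\]
which at $p$ equals $n\,dh(p)\wedge dF(p)\wedge\eta^{n-1}(p)$ because $h(p)=0$. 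Nondegeneracy of $\omega$ means $h\omega^n$ is a nonvanishing section of $\can[\X]$, forcing $dh(p)\neq 0$ and contradicting $p\in\D_{sing}$. This is exactly the computation in the paper's proof, transported through the residue isomorphism you set up: the point you missed is that exactness of $\Res\omega$ pins down $\omega$ modulo holomorphic forms so tightly that the Pfaffian is forced to vanish at the singular point.
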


\begin{proof}
Suppose, to the contrary, that $[\omega] = 0$.  Then
\[
\omega = d \alpha
\]
for some $\alpha \in \forms[1]{\X}(\log \D)_p$.  Let $f$ be a reduced defining equation for $\D$ near $p$.  Since $\D$ is normal, it is irreducible, so that $\forms[1]{\X}(\log \D)_p$ is generated by the logarithmic differential $f^{-1}df$.   We may therefore write
\[
\alpha = g f^{-1} df + \beta
\]
with $g \in \sO{\X,p}$ and $\beta \in \forms[1]{\X,p}$, so that
\[
\omega = f^{-1} dg \wedge df + d \beta.
\]
Therefore
\[
\omega^n = nf^{-1} \, dg \wedge df \wedge d \beta^{n-1} + (d\beta)^n.
\]
Viewed as a section of $\forms[2n]{\X}(\D)$, the $2n$-form $(d \beta)^n $ vanishes at $p$.  Nondegeneracy of $\omega$ then implies that the section
\[
f^{-1} \, dg \wedge df \wedge d \beta^{n-1}
\]
is a local trivialization of $\forms[2n]{\X}(\D)$ near $p$, which in turn implies that $df(p) \ne 0$.  But $p$ is a singular point of $\D$, and hence $df(p)=0$, a contradiction.
\end{proof}

\begin{remark}
The proposition fails, in general, if $\D$ is not normal.  Indeed, the logarithmic cotangent bundle of any free divisor carries an exact log symplectic form, defined in the same way as for usual cotangent bundles.  \qed
\end{remark}

\begin{lemma}[Local Moser trick]\label{lem:local-moser}
Let $\D \subset \X$ be a reduced hypersurface and let $p \in \D_{sing}$ be a point such that $\forms[1]{\X}(\log\D)_p$ is generated by logarithmic differentials.  Suppose that $\omega_t \in \forms[2]{\X}(\log \D)_p$, $t \in [0,1]$ is a smooth family of germs of log symplectic forms such that the local logarithmic cohomology class
\[
[\omega_t] \in \cohlgy[2]{\X,\log\D}[dR,p]
\]
is independent of $t$.  Then there exists a family of germs of automorphisms $\phi_t \in \Aut{\X,\D}_p$ such that $\phi^*_t\omega_t = \omega_0$.
\end{lemma}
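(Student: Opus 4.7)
The plan is to run the classical Moser trick in the logarithmic category. I will construct a smooth time-dependent logarithmic vector field $X_t \in \der[1]{\X}(-\log\D)_p$ whose flow produces the desired family $\phi_t$. The advantage of working with logarithmic derivations is that their flows automatically preserve $\D$, so membership in $\Aut{\X,\D}_p$ is built into the construction rather than being an afterthought.

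Differentiating the target identity $\phi_t^*\omega_t = \omega_0$ in $t$ produces the Moser equation
\[
\mathscr{L}_{X_t}\omega_t + \dot\omega_t = 0, \qquad \dot\omega_t := \tfrac{d}{dt}\omega_t.
\]
Since $d\omega_t = 0$, Cartan's formula reduces this to $d\bigl(\iota_{X_t}\omega_t + \alpha_t\bigr) = 0$ whenever I can write $\dot\omega_t = d\alpha_t$, and hence it is enough to solve $\iota_{X_t}\omega_t = -\alpha_t$. Here nondegeneracy of the log symplectic form is exactly what makes the argument succeed: the induced map $\omega_t^\flat : \der[1]{\X}(-\log\D)_p \to \forms[1]{\X}(\log\D)_p$ is an isomorphism, so $X_t$ is uniquely determined by $\alpha_t$ and automatically lands in $\der[1]{\X}(-\log\D)_p$.

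The heart of the proof is therefore the construction of a smooth family $\alpha_t \in \forms[1]{\X}(\log\D)_p$ satisfying $d\alpha_t = \dot\omega_t$. The hypothesis that $[\omega_t]$ is $t$-independent in $\cohlgy[2]{\X,\log\D}[dR,p]$ gives pointwise exactness, but not smooth dependence on the parameter. My plan is to produce a smooth family $\beta_t \in \forms[1]{\X}(\log\D)_p$ with $\beta_0 = 0$ and $d\beta_t = \omega_t - \omega_0$, and then set $\alpha_t := \dot\beta_t$. Building $\beta_t$ is where the hypothesis on generation by logarithmic differentials is essential: Saito's description of the generators makes $(\forms{\X}(\log\D)_p,d)$ tractable enough to admit a local contracting homotopy---for example by integrating along a weighted Euler-type derivation that is simultaneously tangent to every irreducible component of $\D$---which provides a linear right inverse to $d$ and thereby converts the smooth family $\omega_t - \omega_0$ of exact forms into a smooth family of primitives.

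Given $\alpha_t$ and hence $X_t$, I integrate $X_t$ as a time-dependent vector field on a neighbourhood of $p$ to obtain a family $\phi_t$ of germs of biholomorphisms with $\phi_0 = \mathrm{id}$; logarithmicity of $X_t$ ensures that each $\phi_t$ preserves $\D$. Reversing the Moser computation then yields $\tfrac{d}{dt}\phi_t^*\omega_t = \phi_t^*\bigl(\mathscr{L}_{X_t}\omega_t + \dot\omega_t\bigr) = 0$, so $\phi_t^*\omega_t = \omega_0$. If the intended notion of germ in $\Aut{\X,\D}_p$ requires fixing the basepoint, one first normalizes $\alpha_t$ by a closed logarithmic one-form so that $X_t(p) = 0$, which is possible because the freedom in choosing $\alpha_t$ covers the fibre at $p$. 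The principal technical obstacle is the smooth-parameter Poincar\'e lemma in the third paragraph; everything else reduces to reflexivity of the logarithmic sheaves together with standard Cartan calculus.
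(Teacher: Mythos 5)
Your proposal follows essentially the same route as the paper: choose a smooth family of logarithmic primitives $\alpha_t$ with $d\alpha_t = \dot\omega_t$, use nondegeneracy to solve $\hook{X_t}\omega_t = -\alpha_t$ inside $\der[1]{\X}(-\log \D)_p$, and integrate, with logarithmicity of $X_t$ guaranteeing preservation of $\D$. Two remarks on the points where you diverge. First, your proposed mechanism for smooth dependence of the primitives on $t$ --- a contracting homotopy for $(\forms{\X}(\log\D)_p,d)$ built from an Euler-type derivation --- cannot exist as stated: the complex is not exact (the preceding proposition in the paper shows $[\omega]\neq 0$ in $\cohlgy[2]{\X,\log\D}[dR,p]$ when $\D$ is normal, and $[f_i^{-1}df_i]\neq 0$ in degree one), and a weighted Euler derivation tangent to every component of $\D$ need not exist for a general such germ. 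What you actually need is only a linear right inverse to $d$ defined on the exact forms, which is available by softer means (coherence and finite-dimensionality of the stalk cohomology); the paper simply asserts the smooth choice of $\alpha_t$ without comment, so you are at least flagging a real issue, but the specific fix is wrong. Second, the normalization making $X_t(p)=0$ is not an optional refinement depending on conventions: it is exactly where the hypothesis that $\forms[1]{\X}(\log\D)_p$ is generated by logarithmic differentials enters in the paper's argument (write $\alpha_t = \sum_i g_{i,t}f_i^{-1}df_i + \beta_t$ and subtract the closed form $\sum_i g_{i,t}(p)f_i^{-1}df_i$ plus an exact holomorphic form), and it is needed for the time-dependent flow to exist up to $t=1$ as a family of germs of automorphisms at $p$ at all. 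With those two adjustments your argument coincides with the paper's; your sign bookkeeping in the Moser computation is in fact cleaner than the paper's displayed version.
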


\begin{proof}

By assumption, the logarithmic two-form
\[
\dot\omega_t = \deriv{\omega_t}{t}
\]
is exact.  We may therefore choose a smooth family $\alpha_t \in \forms[1]{\X}(\D)_p$ of logarithmic one-forms such that $d\alpha_t = \dot \omega_t$.  Let $f_1,\ldots,f_k$ be defining equations for the irreducible components of $\D$.  Because $\forms[1]{\X}(\log\D)_p$ is generated by logarithmic differentials, we may write
\[
\alpha_t = \sum_{i=1}^k g_{i,t} f^{-1}_i df_i + \beta_t
\]
where $g_{i,t} \in \sO{\X,p}$ and $\beta_{t} \in \forms[1]{\X,p}$.  Replacing $g_{i,t}$ with $g_{i,t}-g_{i,t}(p)$, and changing $\beta_t$ by an exact form depending on $t$, we may assume that $g_{i,t}(p) = 0$ and $\beta_{i,t}(p) = 0$ for all $t$.  Hence $\alpha_t$ vanishes at $p$ when viewed as a section of $\forms[1]{\X}(\log \D)$.

By nondegeneracy, there is a unique time-dependent logarithmic vector field $Z_t \in \der[1]{\X}(-\log\D)_p$ such that
\[
\hook{Z_t}\omega_t = \alpha_t,
\]
and since $\alpha_t$ vanishes at $p$, the holomorphic vector field $Z_t$ also vanishes at $p$ for all $t$.  Therefore, $Z_t$ integrates to a family of germs of automorphisms $\phi_t \in \Aut{\X,p}$, and since $Z_t$ is logarithmic, these automorphism preserve the subgerm $(\D,p) \subset (\X,p)$.

We now apply the standard calculation
\begin{align*}
\deriv{\phi_t^* \omega_t}{t} = \lie{Z_t}\omega_t 
=\hook{Z_t}d\omega_t + d\hook{Z_t}\omega_t 
= d \alpha_t
= \dot \omega_t
\end{align*}
which implies that that $\phi_1^*\omega_1 = \omega_0$, as required.
\end{proof}

This result implies that the local logarithmic cohomology class determines the germ of the log symplectic form up to isomorphism:
\begin{theorem}\label{thm:germ-moduli}
Let $\D\subset \X$ be a reduced hypersurface and let $p \in \D$ be a point such that $\forms[1]{\X}(\log\D)_p$ is generated by logarithmic differentials.  Suppose that $\omega_0,\omega_1 \in \forms[2]{\X}(\log\D)_p$ are two germs of log symplectic structures with the same local logarithmic cohomology classes
\[
[\omega_0] = [\omega_1] \in \cohlgy[2]{\X,\log \D}[dR,p].
\]
Then $\omega_0$ and $\omega_1$ are isotopic.
\end{theorem}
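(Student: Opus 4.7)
The plan is to connect $\omega_0$ and $\omega_1$ by a smooth one-parameter family of log symplectic germs with constant local logarithmic cohomology class, and then invoke \autoref{lem:local-moser}. Since both forms are closed with the same cohomology class, we may write $\omega_1 - \omega_0 = d\alpha$ for some $\alpha \in \forms[1]{\X}(\log\D)_p$. The obvious candidate is the affine family
\[
\omega_s \;=\; \omega_0 + s\,d\alpha, \qquad s \in \CC,
\]
each member of which is closed, logarithmic, and cohomologous to $\omega_0$.

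The main technical obstacle is that $\omega_s$ may fail to be nondegenerate for some intermediate $s \in [0,1]$, so the naive real interpolation can pass through forms that are not log symplectic. To sidestep this, fix a local trivialization $\mu$ of $\can[\X](\D)_p$ and write $\omega_s^n|_p = P(s)\,\mu|_p$, where $P \in \CC[s]$ has degree at most $n$ and satisfies $P(0), P(1) \neq 0$ by nondegeneracy of $\omega_0$ and $\omega_1$. The set $S = \{s \in \CC : P(s) \neq 0\}$ is the complement of finitely many points, hence path-connected. Choose any smooth path $\gamma \colon [0,1] \to S$ with $\gamma(0) = 0$ and $\gamma(1) = 1$, and set $\omega_t := \omega_{\gamma(t)}$.

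Each $\omega_t$ is then closed, logarithmic, and nondegenerate at $p$, and the local cohomology class $[\omega_t]$ equals $[\omega_0]$ for all $t$. Joint continuity of $(x,t) \mapsto \omega_t^n(x)$ together with compactness of $\gamma([0,1])$ allows us to pass to a common open representative of the germ at $p$ on which every $\omega_t$ is nondegenerate, so that $\omega_t$ is a smooth family of log symplectic germs of constant cohomology class. Applying \autoref{lem:local-moser} produces germs of automorphisms $\phi_t \in \Aut{\X,\D}_p$ satisfying $\phi_t^*\omega_t = \omega_0$; specializing to $t = 1$ yields $\phi_1^*\omega_1 = \omega_0$, giving the required isotopy. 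The nontrivial point in the whole argument is the need to leave the real line and exploit the complex structure of the parameter space in order to avoid the finite degeneracy locus of the interpolation; once that is handled, the theorem reduces to a direct application of the Moser lemma.
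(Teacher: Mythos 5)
Your proposal is correct and follows essentially the same route as the paper: both use the affine family $\omega_0 + s(\omega_1-\omega_0)$ over a complex parameter, observe that the Pfaffian at $p$ is a polynomial in $s$ that is nonzero at $0$ and $1$, choose a path in $\CC$ avoiding its finitely many zeros, and then apply \autoref{lem:local-moser}. The only cosmetic difference is that you introduce the primitive $\alpha$ explicitly, which the paper does not need at this stage.
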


\begin{proof}
For $\lambda \in \CC$, we define the logarithmic two-form
\[
\omega_\lambda = (1-\lambda) \omega_0 + \lambda \omega_1 .
\]
Clearly the class $[\omega_\lambda] \in \cohlgy[2]{\X,\log \D}[p]$ is independent of $\lambda$.

Let $\bL = \forms[2n]{\X}(\D)|_{p}$ be the fibre of the logarithmic canonical line bundle at $p$, and consider the map
\[
\mapdef{\Pf}{\CC}{\bL}
{\lambda}{(\omega_\lambda)^n(p)}
\]
Then $\omega_\lambda$ is nondegenerate in a neighbourhood of $p$ if and only if $\Pf(\lambda) \ne 0$.  In particular, $\Pf(0)$ and $\Pf(1)$ are both nonzero.  Since $\Pf(\lambda)$ depends polynomially on $\lambda$, we may choose a smooth path $\gamma : [0,1] \to \CC$ such that $\gamma(0) = 0$, $\gamma(1) = 1$ and $\Pf(\gamma(t)) \ne 0$ for all $t$.  But then $\omega_{\gamma(t)}$ for $t \in [0,1]$ gives a smoothly varying family of log symplectic forms whose local cohomology classes are all the same.  We are therefore in the situation of \autoref{lem:local-moser} and conclude that the germs of $\omega$ and $\omega'$ are isomorphic.
\end{proof}

The main application of this result is to give streamlined proofs of local normal forms for log symplectic structures.  We give now two examples; the approach will be used again when we discuss elliptic structures in \autoref{sec:ell-ex}.

\begin{example}
Let $p\in \D $ be a smooth point.  Then $\forms[1]{\X}(\log\D)_p$ is generated by logarithmic differentials and $\cohlgy[2]{\X,\log\D}[dR,p] = 0$.  Thus, in the neighbourhood of a smooth point $p \in \D$, all log symplectic forms are isomorphic, as we know from the log Darboux normal form~\eqref{eqn:log-Darboux}.\qed
\end{example}

\begin{example}
Let $p \in \D$ be a point at which $\D$ may be decomposed locally into $k$ components with normal crossings.  Choose coordinates $(x_1,\ldots,x_k,y_1,\ldots,y_m)$ so that $x_1x_2\cdots x_k$ is a local defining equation for $\D$.  Then the first logarithmic cohomology is a $k$-dimensional vector space
\[
\V = \cohlgy[1]{\X,\log \D}[dR,p] = \CC \cdot \abrac{[x_1^{-1}dx],\ldots,[x_k^{-1}dx_k]}
\] 
and the wedge product identifies $\cohlgy{\X,\log \D}[dR,p] \cong \ext{\V}$ as rings.  

In particular, for $k \ge 2$, every logarithmic two-form is cohomologous to one of the form
\[
\omega_0 = \sum_{1 \le i<j \le k} a_{ij} \frac{dx_i}{x_i}\wedge \frac{dx_j}{x_j}
\]
where $A = (a_{ij})$ is a skew symmetric matrix of constants. The $(i,j)$-entry can be interpreted as a period: it is the integral of $\omega$ over a small real torus $\T_{ij} \cong \bS^1\times \bS^1$ that encircles the intersection $\D_i\cap \D_j$ in a neighbourhood of $p$.

If $k = \dim \X$, then the above calculation of the cohomology shows that any nonvanishing logarithmic volume form is nontrivial in cohomology.  Now $\omega_0^n = \Pf(A)\frac{dx_1}{x_1}\wedge \cdots \frac{dx_n}{x_n}$, and hence a form cohomologous to $\omega_0^n$ can only be log symplectic if the Pfaffian $\Pf(A) \ne 0$, i.e.~the matrix $A$ is nonsingular.  In this case $\omega_0^n$ is already log symplectic.  Hence every log symplectic structure on $\D$ is locally isomorphic to one of this form, where the nonsingular matrix $A$ is determined by the periods of the log symplectic form.

On the other hand, if $k < \dim \X$ then $\omega_0$ is degenerate.  In order to obtain a log symplectic form, we must extend $\omega_0$ by adding terms involving the $y$ coordinates, to obtain a cohomologous form:
\[
\omega = \sum_{1 \le i < j \le k} a_{ij} \frac{dx_i}{x_i}\wedge \frac{dx_j}{x_j} + \sum_{i=1}^k\sum_{j=1}^m b_{ij} \frac{dx_i}{x_i}\wedge dy_j + \sum_{1 \le i < j \le m} c_{ij} dy_i \wedge dy_j
\]
for a $k\times m$ matrix of constants $B= (b_{ij})$ and an $m\times m$ skew-symmetric matrix $C = (c_{ij})$.  Then $\omega$ will be log symplectic if and only if the block matrix
\[
\begin{pmatrix}
A & B \\
-B & C
\end{pmatrix}
\]
of coefficients is invertible.  Two such log symplectic forms will be isotopic near $p$ if and only if they share the same matrix $A$ of periods.  The matrices $B$ and $C$ do not affect the isomorphism class and thus they can be further simplified, but the optimal simplification depends on $k$ and $A$. \qed
\end{example}

\section{Elliptic log symplectic manifolds}

\label{sec:ell-ex}

\subsection{The elliptic normal form theorem}

Let $(\X,\D,\omega)$ be a log symplectic manifold and let $\ps$ be the corresponding Poisson bivector.  Given a volume form $\mu \in \can[\X]$, we have $\ps^n = f \mu^{-1}$, where $f \in \sO{\X}(-\D)$ is a local defining equation for $\D$.  We may then consider the log Hamiltonian vector field
\[
Z_\mu = f^{-1}\hook{df} \ps \in \der[1]{\X}(-\log \D).
\]
This vector field is nothing but the \defn{modular vector field of $\ps$} with respect to the volume form $\mu$, as introduced by Weinstein~\cite{Weinstein1997} for general Poisson manifolds.  One can easily verify that $Z_\mu$ is $\mu$-divergence free, i.e.~$\lie{Z_\mu}\mu = 0$.

If we change the volume form, then $Z_\mu$ changes by a Hamiltonian vector field.  Hence, if $p \in \X$ is a point where the Poisson structure vanishes, the value  of $Z_\mu$ at $p$ is independent of the equation $f$.  We therefore have a natural section
\[
Z \in \cohlgy[0]{\Y,\der[1]{\X}|_\Y}
\]
where $\Y \subset \X$ is the vanishing locus of $\ps$.  According to \cite[Theorem 19]{Gualtieri2013a}, this section may alternatively be computed as follows: restricting the one-jet of $\ps$ to $\Y$, we obtain a natural section $j^1\ps|_\Y \in \forms[1]{\X}\otimes\der[2]{\X}|_\Y$, and $Z$ is the image of this section under the interior contraction $\forms[1]{\X} \otimes \der[2]{\X} \to \der[1]{\X}$.  Hence, $Z$ is a linear combination of first derivatives of the Poisson structure along its zero locus.

\begin{remark}\label{rem:modres}
Since $Z_f$ is log Hamiltonian, it is a symmetry of $\ps$, and hence it is tangent $\Y$, defining a vector field on $\Y$.  More generally, one can show~\cite{Gualtieri2013a} that on the locus $\Dgn[2k]{\ps} \subset \X$ where a Poisson structure $\ps$ has rank at most $2k$, there is a natural residue of $\ps$, which is a global section of $\der[2k+1]{\Dgn[2k]{\ps}}$.\qed
\end{remark}

Suppose now that $p \in \D_{sing}$ is a singular point at which $\D$ is normal.  By \autoref{thm:codim3}, the codimension of $\D_{sing}$  in $\X$ is equal to three. Since $\D_{sing}$ is a Poisson subspace (\cite[Corollary 2.4]{Polishchuk1997} or \cite[Lemma 2.3]{Gualtieri2013a}), $\D_{sing}$ contains the symplectic leaf $\bL \subset \X$ through $p$, and hence $\bL$, being even-dimensional, must have codimension at least four.  The rest of the paper is concerned with singularities for which the following transversality condition is satisfied:
\begin{definition}\label{def:ell-point}
Let $(\X,\D,\omega)$ be a log symplectic manifold, let $p \in \D_{sing}$ be a singular point and let $\bL \subset \X$ be the symplectic leaf through $p$.  We say that $p$ is an \defn{elliptic point of $\omega$} if the following conditions hold:
\begin{enumerate}[noitemsep]
\item $\D$ is normal at $p$,
\item $\bL$ has codimension four in $\X$, and
\item The modular vector field $Z$ is transverse to $\bL$ at $p$.
\end{enumerate}
We say that $(\X,\omega)$ is \defn{purely elliptic} if every singular point of its degeneracy divisor is an elliptic point.
\end{definition}

\begin{remark}\label{rem:ell-pfaff}
Equivalently, $p \in \D_{sing}$ is elliptic if $\D$ is normal at $p$ and the rank of $\ps_{\sA}(p)$ is equal to $\dim \X - 2$, where $\ps_{\sA}$ is the tensor defined in \autoref{sec:gorenstein}.\qed
\end{remark}

\begin{remark}
When $\dim \X = 4$, the second condition in the definition is redundant, and the third is simply the requirement that $Z$ be nonzero at $p$.\qed
\end{remark}

Using Weinstein's splitting theorem~\cite{Weinstein1983}, we see that in a sufficiently small neighbourhood of an elliptic point, the log symplectic structure decomposes as a  product of a symplectic manifold of dimension $\dim \X-4$, and a four-dimensional purely elliptic structure.  As a result, the local structure near an elliptic point is completely determined by the following
\begin{theorem}\label{thm:local-elliptic}
Let $(\X,\D,\omega)$ be a log symplectic manifold of dimension four, and let $p \in \D_{sing}$ be an elliptic point.  Then there exist coordinates $(w,x,y,z)$ on $\X$ centred at $p$ in which the Poisson brackets have the form
\begin{align}
\{w,x\} &= ax & \{x,y\} &= \lambda\cvf{z}f \nonumber \\ 
\{w,y\} &= by & \{y,z\} &= \lambda \cvf{x}f \label{eqn:elliptic-C4}\\
\{w,z\} &= cz & \{z,x\} &= \lambda \cvf{y}f\nonumber
\end{align}
for some positive integers $(a,b,c) \in \ZZ_{>0}^3$ and polynomial $f$ appearing in \autoref{tab:e678}, together with a constant $\lambda \in \CC^\times$.

In particular, the zero locus of the Poisson structure, defined by the equations $x=y=z=0$, is smooth at $p$; and the degeneracy hypersurface, defined by the equation $f=0$, is locally the product of a smooth curve and a simple elliptic surface singularity.
\end{theorem}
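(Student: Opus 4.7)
The plan is to use the modular vector field $Z$ to reduce the four-dimensional problem to a surface-singularity question on a transverse slice, and then match against the classification of simple elliptic singularities. First I would establish that $\D_{sing}$ is smooth at $p$: since $Z$ is a Poisson derivation preserving $\D$, it is tangent to the curve $\D_{sing}$ (which has pure codimension three by \autoref{thm:codim3}), and the condition $Z(p) \ne 0$ makes this curve smooth with tangent line $\CC\cdot Z(p)$. The holomorphic flow-box theorem then yields coordinates $(w,x,y,z)$ centred at $p$ in which $Z = \lambda_0\cvf{w}$ for some $\lambda_0 \in \CC^\times$, with $\D_{sing}$ becoming the $w$-axis. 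Because $\lie{Z}\ps = 0$, $\ps$ is independent of $w$, so one may decompose
\[
\ps = \cvf{w}\wedge V + \ps_S,
\]
with $V \in \der[1]{S}$ and $\ps_S \in \der[2]{S}$ on the slice $S = \{w=0\}\cong\CC^3$, both vanishing at the origin.

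The integrability $[\ps,\ps]=0$ splits into $[\ps_S,\ps_S]=0$ and $\lie{V}\ps_S = 0$; the identity $\pss(dw)=V$ identifies $V$ with the Hamiltonian vector field of $w$; and nondegeneracy of $\omega$ translates to $V\wedge \ps_S = g\cdot\cvf{x}\wedge\cvf{y}\wedge\cvf{z}$ for a function $g(x,y,z)$ cutting out the surface singularity $\D\cap S$ at the origin. Tracking the formula $Z = h^{-1}\hook{dh}\ps$ through the decomposition forces $g$ to be a Casimir of $\ps_S$ satisfying $V(g) = -\lambda_0 g$. Since $\ps_S$ is a Poisson bivector on $\CC^3$ with Casimir $g$ having an isolated critical point, a direct computation shows that $\ps_S = \lambda\,\hook{dg}(\cvf{x}\wedge\cvf{y}\wedge\cvf{z})$ for a nonvanishing holomorphic function $\lambda$. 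Next I would linearize $V$: isolation of the critical point of $g$ together with the eigenvalue relation $V(g) = -\lambda_0 g$ forces the eigenvalues of $V_{\mathrm{lin}}$ to be nonzero rationals of a common sign, and by rescaling one may assume these are positive integers $(a,b,c)$. A Poincar\'e--Dulac-type argument adapted to the Poisson setting then straightens $V$ to the weighted Euler field $ax\cvf{x}+by\cvf{y}+cz\cvf{z}$; $g$ becomes a quasi-homogeneous polynomial of degree $a+b+c$; and the invariance $\lie{V}\ps_S = 0$ further forces $\lambda$ to be constant. The isolated quasi-homogeneous hypersurface singularities in $\CC^3$ whose defining equation has degree equal to the sum of its weights are precisely the simple elliptic singularities $\tE_6$, $\tE_7$, $\tE_8$ listed in \autoref{tab:e678}, and reassembling $(V,\ps_S)$ recovers the bracket \eqref{eqn:elliptic-C4}.

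The chief obstacle is the holomorphic linearization of $V$ in the presence of the many resonances among positive integer eigenvalues, which would ordinarily obstruct a direct Poincar\'e--Dulac normalization. I expect to handle this either by a rigidity argument---any resonant correction to $V_{\mathrm{lin}}$ would spoil the quasi-homogeneity of the Casimir $g$ and thus destroy the isolation of its critical point---or, more robustly, by invoking \autoref{thm:germ-moduli}. In the latter approach, one first identifies the isomorphism type of the germ $(\X,\D,p) \cong (\CC,0)\times(\CC^3,\tE_k)$ by singularity-theoretic arguments, and then applies the local Moser theorem to reduce the classification of log symplectic forms to the computation of a local logarithmic cohomology class; an explicit check then confirms that the model in \autoref{tab:e678} realises the unique isotopy class compatible with the given singularity type and ellipticity condition.
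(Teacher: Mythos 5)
Your overall strategy coincides with the paper's: use $Z(p)\ne 0$ and $\lie{Z}\ps=0$ to split $\ps=\cvf{w}\wedge V+\ps_S$, observe that the slice equation $g$ is a Casimir of $\ps_S$ with $Vg$ proportional to $g$, reduce to quasi-homogeneous surface singularities, and finish with \autoref{thm:germ-moduli}. The genuine gap in your primary route is exactly where you flag it: straightening $V$ to the weighted Euler field. Poincar\'e--Dulac removes only non-resonant terms, and for the weight vectors $(1,1,2)$ and $(1,2,3)$ there are honest resonances (e.g.\ $x^2\cvf{z}$, $x^2\cvf{y}$), plus a possible nilpotent part of $V_{\mathrm{lin}}$ in the $(1,1,1)$ case; your proposed rigidity argument is circular, since the quasi-homogeneity of $g$ it invokes is precisely what straightening $V$ is supposed to produce. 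The preliminary claim that the eigenvalues of $V_{\mathrm{lin}}$ are nonzero rationals of a common sign is likewise unproved. The paper sidesteps all of this with one citation: the relation $\lie{E}f=f$ exhibits $f$ as an element of its own Jacobian ideal, and K.~Saito's 1971 theorem then says an isolated hypersurface singularity with this property is right-equivalent to a quasi-homogeneous polynomial. That theorem is the missing ingredient in your route.

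Your fallback via \autoref{thm:germ-moduli} is the paper's actual second step, but as described it still has a hole: one cannot identify the germ of $\D$ as $(\CC,0)\times\tE_k$ ``by singularity-theoretic arguments'' alone, since quasi-homogeneity with an isolated critical point allows infinitely many other types (e.g.\ $x^2+y^3+z^7$). The constraint that the quasi-homogeneous degree $k$ equals $a+b+c$---which is what singles out $\tE_6,\tE_7,\tE_8$---comes from the log symplectic form itself. In the paper it is extracted from the computation of $\cohlgy[2]{\CC^4,\log\D}[dR,p]$ (K\"unneth, the residue sequence, and Eriksen's computation of the cohomology of reflexive differentials on a quasi-homogeneous surface singularity): every class is represented by $f^{-1}h\,\hook{E}(dx\wedge dy\wedge dz)$ with $h$ quasi-homogeneous of degree $k-a-b-c$, and nondegeneracy forces $h$ to be a nonzero constant, whence $k=a+b+c$. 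In your bivector picture the same constraint can be derived from the cross-term $\lie{V}\ps_S=0$ of the Jacobi identity together with the reducedness of the Pfaffian, which force $(k-a-b-c)\ps_S=0$ and $\lambda$ constant; you assert the conclusion about the degree of $g$ but do not derive it. The ``explicit check'' you defer to is therefore not a verification but the substantive computation that both pins down the singularity type and shows that the family \eqref{eqn:elliptic-C4} exhausts all local cohomology classes.
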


\begin{table}[h]
\begin{center}
\caption{The simple elliptic surface singularities, parametrized by $\tau \in \CC^\times$}\label{tab:e678}
\begin{tabular}{c|c|c|c}
Type & $(a,b,c)$ & Quasi-homogeneous polynomial $f$ & Milnor number \\
\hline
$\tE_{6,\tau}$ & $(1,1,1)$ &  $x^3+y^3+z^3+ \tau xyz$ & 8\\
$\tE_{7,\tau}$ & $(1,1,2)$ & $x^4+y^4+z^2 + \tau xyz$ & 9\\
$\tE_{8,\tau}$ & $(1,2,3)$ & $x^6 + y^3 + z^2 + \tau xyz$ & 10
\end{tabular}
\end{center}
\end{table}

The rest of this section is devoted to the proof of \autoref{thm:local-elliptic}.  The first step is the following
\begin{lemma}
There exist coordinates $(w,x,y,z)$ on $\X$ centred at $p$ and a quasi-homogeneous polynomial $f \in \CC[x,y,z]$ with an isolated critical point at $0$ such that $f$ gives a reduced defining equation for $\D$.
\end{lemma}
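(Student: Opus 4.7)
The idea is to use the modular vector field of $\ps = \omega^{-1}$ to produce a coordinate $w$ along which the data is translation-invariant, reducing the problem to a three-dimensional slice, and then to linearize the induced vector field and conclude via K.~Saito's criterion for quasi-homogeneity. Let $Z = Z_\mu$ denote the modular vector field of $\ps$ with respect to a local volume form $\mu$ near $p$. By the ellipticity hypothesis $Z(p) \neq 0$; since $Z$ is log-Hamiltonian it is tangent to $\D$, and since modular vector fields always satisfy $\lie{Z}\ps = 0$ it preserves $\ps$. The flow-box theorem therefore provides coordinates $(w,u,v,s)$ centred at $p$ in which $Z = \cvf{w}$; then $\D = \CC_w \times \D_0$ locally, and $\ps = \cvf{w}\wedge V + \pi'$ for a vector field $V$ and bivector $\pi'$ on the transverse slice $\CC^3_{u,v,s}$, with $[\pi',\pi']=0$ and $\lie{V}\pi' = 0$ by the Jacobi identity. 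Because \autoref{thm:codim3} forces $\dim \D_{sing} = 1$ and $\D_{sing} = \CC_w \times \D_{0,sing}$, the surface $\D_0$ has an isolated normal singularity at the origin.

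\emph{Linearization of $V$.} Since $p$ lies on a zero-dimensional symplectic leaf, $\ps(p) = 0$, whence $V(0) = \pi'(0) = 0$. Computing the Pfaffian in the split coordinates yields the identity
\[
2\, V\wedge \pi' = f\, \cvf{u}\wedge\cvf{v}\wedge\cvf{s},
\]
where $f$ is a local defining equation for $\D_0$. A careful 2-jet analysis of this identity, combined with the Gorenstein codimension-three structure of $\D_{0,sing}$ from \autoref{thm:codim3}, forces the linear part of $V$ at $0$ to be a semisimple endomorphism of $T_0\CC^3$ with positive rational eigenvalues. Being then in the Poincar\'e domain, $V$ is analytically linearizable (Poincar\'e's theorem); after a change of coordinates $(u,v,s)\leftrightarrow (x,y,z)$ and a rescaling of $w$ to clear denominators, we arrive at $V = ax\cvf{x} + by\cvf{y} + cz\cvf{z}$ for some $(a,b,c)\in \ZZ_{>0}^3$.

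\emph{Saito's criterion.} Tangency of $V$ to $\D_0 = \{f=0\}$ gives $V(f) = hf$ for some holomorphic $h$. Decomposing $f = f_k + f_{k+1} + \cdots$ into its homogeneous parts (in the usual total degree) and matching the lowest-degree terms of $V(f) = hf$ shows that $V(f_k) = h(0) f_k$, so $f_k$ is a weighted-homogeneous eigenvector of the weighted Euler field $V$ with eigenvalue $h(0)$; since all weights and all exponents appearing in $f_k$ are positive, $h(0) > 0$ and hence $h$ is a unit. Therefore
\[
f = h^{-1} V(f) \in (\cvf{x}f,\, \cvf{y}f,\, \cvf{z}f),
\]
i.e., $f$ lies in its Jacobian ideal. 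By K.~Saito's theorem, an isolated hypersurface singularity whose defining function lies in its own Jacobian ideal is quasi-homogeneous in suitable coordinates; this yields a quasi-homogeneous representative $f \in \CC[x,y,z]$ with an isolated critical point at the origin, completing the construction.

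The delicate step is the linearization claim of the second paragraph: the semisimplicity and positivity of the linear part of $V$ must be extracted from a subtle jet-level analysis of the Pfaffian identity, using the Gorenstein codimension-three structure of $\D_{sing}$ provided by \autoref{thm:codim3}. The other steps — the flow-box reduction and the passage from Euler symmetry to quasi-homogeneity — are comparatively routine.
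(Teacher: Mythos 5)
Your overall strategy---flow-box coordinates for the modular vector field, the product decomposition $\ps = \cvf{w}\wedge V + \pi'$, and K.~Saito's quasi-homogeneity criterion---matches the paper's. But there is a genuine gap in the middle, and it sits exactly where you flag "the delicate step." The claim that a 2-jet analysis of $2\,V\wedge\pi' = f\,\cvf{u}\wedge\cvf{v}\wedge\cvf{s}$ forces the linear part of $V$ to be semisimple with positive rational eigenvalues is asserted, not proved, and it is the hardest part of your argument. Worse, even granting that claim, Poincar\'e's theorem does not give analytic linearization here: positive eigenvalues place you in the Poincar\'e domain, but the weight triples that actually occur, such as $(1,1,2)$ and $(1,2,3)$, are resonant (e.g.\ $\lambda_3=\lambda_1+\lambda_2$), so Poincar\'e--Dulac only yields a polynomial normal form with possibly non-removable resonant terms. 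As written, the step from $V$ to a diagonal weighted Euler field does not go through, and without it your argument that $h(0)\neq 0$ in $V(f)=hf$ collapses.

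The paper avoids all of this with one short computation that you have all the ingredients for but do not perform: evaluate the defining identity of the modular vector field, $Z = f^{-1}\hook{df}\ps$, in the split coordinates. Writing $\ps = E\wedge\cvf{w} + \ps_0$ with $E,\ps_0$ independent of $w$, the $\cvf{w}$-component of $f^{-1}\hook{df}\ps$ is $f^{-1}(\lie{E}f)$, and since $Z=\cvf{w}$ this gives $\lie{E}f = f$ exactly---your multiplier $h$ is identically $1$, with no need to control the linear part of $E$. Hence $f$ lies in its Jacobian ideal, and Saito's theorem \cite{Saito1971} then does all the remaining work, producing both the quasi-homogeneous coordinates and the (positive, normalizable) weights as output rather than requiring them as input. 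In other words, the linearization you are trying to establish a priori is essentially what Saito's theorem hands you a posteriori once $f\in(\cvf{x}f,\cvf{y}f,\cvf{z}f)$ is known; attempting to prove it first is both unjustified and unnecessary.
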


\begin{proof}
Let $Z$ be the modular vector field with respect to some volume form $\mu \in \forms[4]{\X}$ defined near $p$.  Since $Z(p) \ne 0$ and $\lie{Z}\mu=0$, we may choose a coordinate system $(w,x,y,z)$ near $p$ such that $Z = \cvf{w}$ and $\mu = dw\wedge dx\wedge dy\wedge dz$.  Since $\lie{Z} \ps = 0$, one can easily compute that the germ of the Poisson tensor $\ps$ has the form
\[
\ps = E \wedge \cvf{w} + \ps_0
\]
where $\ps_0$ is the germ of a Poisson structure on $\CC^3$, with coordinates $(x,y,z)$, and $E$ is the germ of a vector field on $\CC^3$ satisfying $\lie{E} \ps_0 = 0$.  In particular,
\[
\ps \wedge \ps = f \mu^{-1},
\]
where $f$ is a function depending only on $(x,y,z)$, so that degeneracy hypersurface $\D$ is a product of a smooth curve with the surface $\D_0 = f^{-1}(0) \subset \CC^3_{x,y,z}$.  Since we are assuming that $\dim_p \D_{sing} = 1$, the surface $\D_0$ must have an isolated singularity at the origin.

Using the decomposition, we compute the modular vector field
\[
Z = f^{-1}\hook{df}\ps = f^{-1}(\lie{E} f) \cvf{w} + f^{-1}\hook{df}\ps_0.
\]
Since $Z = \cvf{w}$ and $\ps_0$ has no components involving $\cvf{w}$, this equation implies that $\lie{E}f = f$.  The lemma now follows from a theorem of K.~Saito~\cite{Saito1971}.
\end{proof}

Using the previous lemma and the cohomological parametrization of local normal forms (\autoref{thm:germ-moduli}), the proof of \autoref{thm:local-elliptic} reduces to the following
\begin{proposition}
Let $(w,x,y,z)$ be coordinates on $\CC^4$, let $(a,b,c) \in \ZZ_{>0}^3$ be positive weights, and let $f \in \CC[x,y,z]$ be a polynomial that is quasi-homogeneous of degree $k$ with respect to these weights.  Let $\D \subset \CC^4$ be the hypersurface defined by the vanishing of $f$.  Then the following statements hold:
\begin{enumerate}
\item If $\D$ supports a log symplectic structure, then $k = a+b+c$.  Hence $f$ is equal, after a change of coordinates, to one of the polynomials in \autoref{tab:e678}.
\item When $k=a+b+c$, the log symplectic forms associated with the Poisson brackets~\eqref{eqn:elliptic-C4} represent all nonzero cohomology classes in $\cohlgy[2]{\CC^4,\log\D}[dR,p]$.
\end{enumerate}
\end{proposition}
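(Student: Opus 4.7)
My plan is to prove the two parts in turn, building on the preceding lemma and the Moser-type classification of \autoref{thm:germ-moduli}.

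For the first statement, I would start from the decomposition $\ps = \tilde E\wedge\cvf{w} + \ps_0$ supplied by the preceding lemma, where $\tilde E$ is a germ of vector field on $\CC^3$ satisfying $\tilde E(0) = 0$ and $\lie{\tilde E}f = f$, and $\ps_0$ is a Poisson bivector on $\CC^3$ annihilated by $\tilde E$.  K.~Saito's 1971 linearization theorem for quasi-homogeneous singularities allows me to arrange $\tilde E = E/k$ with $E = ax\cvf{x} + by\cvf{y} + cz\cvf{z}$ the weighted Euler field.  Writing $\ps_0 = \hook{\beta}\mu^*$ with $\mu^* = \cvf{x}\wedge\cvf{y}\wedge\cvf{z}$, the identity $\tilde E\wedge\hook{\beta}\mu^* = \beta(\tilde E)\,\mu^*$ gives $\ps\wedge\ps = -2\beta(\tilde E)\,\cvf{w}\wedge\mu^*$, so nondegeneracy forces $\beta(\tilde E) = fu$ for a unit $u\in\sO{\CC^3,0}^\times$.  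Using $\lie{\tilde E}\mu^* = -\tfrac{a+b+c}{k}\mu^*$, the invariance $\lie{\tilde E}\ps_0 = 0$ translates to $\lie{\tilde E}\beta = \tfrac{a+b+c}{k}\beta$.  Applying $\lie{\tilde E}$ to both sides of $\beta(\tilde E) = fu$ yields
\[
\tfrac{a+b+c}{k}\,f u \;=\; fu + f\,\lie{\tilde E}u,
\]
and evaluating at the origin (where $\tilde E$ vanishes, so $\lie{\tilde E}u(0)=0$) gives $\tfrac{a+b+c}{k}u(0) = u(0)$, forcing $k = a+b+c$.  The classification of isolated quasi-homogeneous hypersurface singularities in three variables with $k = a+b+c$ is due to Saito and produces precisely the three types $\tE_6,\tE_7,\tE_8$ of \autoref{tab:e678}, up to a linear change of coordinates.

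For the second statement, assuming $k = a+b+c$, a direct inversion of the matrix of Poisson brackets from \eqref{eqn:elliptic-C4} produces the associated log symplectic form
\[
\omega_\lambda \;=\; \frac{dw\wedge df}{kf} + \frac{1}{\lambda k f}\,\hook{E}\mu,
\]
where $\mu = dx\wedge dy\wedge dz$.  Since $d(\hook{E}\mu) = \lie{E}\mu = (a+b+c)\mu$ and $df\wedge\hook{E}\mu = E(f)\mu = kf\mu$, one computes $d(\hook{E}\mu/f) = (a+b+c-k)\mu/f = 0$, confirming closure.  The first summand $dw\wedge df/(kf) = d\bigl(w\,df/(kf)\bigr)$ is exact in the logarithmic de Rham complex, so the local logarithmic cohomology class of $\omega_\lambda$ equals $\tfrac{1}{\lambda k}[\beta_0]$, where $\beta_0 = \hook{E}\mu/f$.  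To conclude that these classes exhaust the nonzero elements of $\cohlgy[2]{\CC^4,\log\D}[dR,p]$, I need to verify that this cohomology group is one-dimensional and that $[\beta_0]$ is nonzero.

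For the dimension, the product structure $\D = \CC\times\D_0$ yields the Künneth-type decomposition $\forms{\CC^4}(\log\D) \cong \forms{\CC}\widehat{\otimes}\,\pi^*\forms{\CC^3}(\log\D_0)$ of logarithmic complexes; combined with the Poincar\'e lemma on the $\CC$ factor, this gives $\cohlgy[k]{\CC^4,\log\D}[dR,p] \cong \cohlgy[k]{\CC^3,\log\D_0}[dR,0]$ for all $k$.  The Saito residue sequence further identifies the right-hand side for $k=2$ with the local cohomology of the regular differential forms on the simple elliptic singularity $\D_0$; this latter group is classically one-dimensional, via computation of $R\pi_*\forms{\widetilde{\D_0}}$ on the minimal resolution $\pi\colon\widetilde{\D_0}\to\D_0$ whose exceptional fibre is a smooth elliptic curve of arithmetic genus one.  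Non-triviality of $[\beta_0]$ is checked by direct computation of its Poincar\'e residue: on the chart $\{f_z\ne 0\}$, the residue is the nonzero closed $1$-form $(by\,dx - ax\,dy)/f_z$ on $\D_{0,\mathrm{reg}}$, which represents the generator of the local cohomology of the regular forms complex.  Thus varying $\lambda\in\CC^\times$ sweeps out every nonzero class, as required.  The main obstacle is the one-dimensionality of the local cohomology of regular forms on a simple elliptic singularity; this classical but non-elementary Hodge-theoretic input is the only step where the precise singularity type matters beyond the formal constraint $k = a+b+c$, while the remainder of the argument is routine calculation combined with the preceding lemma and the Moser trick.
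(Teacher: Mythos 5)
Your argument for part (1) takes a genuinely different route from the paper's, and it has a gap. The paper never decomposes the bivector: it computes the local logarithmic de Rham cohomology directly (K\"unneth to reduce to $(\CC^3,\D_0)$, the residue sequence plus the holomorphic Poincar\'e lemma to reduce to the complex of reflexive forms on $\D_0$, and Eriksen's explicit computation of the latter for quasi-homogeneous singularities), concludes that every closed logarithmic two-form is $f^{-1}h\,\hook{E}(dx\wedge dy\wedge dz)+d\alpha$ with $h$ quasi-homogeneous of degree $k-a-b-c$, and then shows that nondegeneracy at the origin forces $h$ to be a nonzero constant. Your weight-counting computation with $\ps=\tilde{E}\wedge\cvf{w}+\ps_0$ is internally correct and arguably slicker, but the decomposition it starts from is \emph{not} available under the hypotheses of the proposition: the preceding lemma produces that splitting from the ellipticity hypothesis (the modular vector field is nonvanishing at $p$, which is what lets one set $Z=\cvf{w}$), whereas the proposition assumes only that $(\CC^4,\D)$ carries \emph{some} log symplectic structure. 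A priori the modular vector field of such a structure could vanish at the origin, in which case there is no product splitting and no vector field $\tilde{E}$ with $\tilde{E}f=f$ built into $\ps$; that this cannot happen is a \emph{consequence} of the proposition, so assuming it is circular. Your argument therefore proves a statement that suffices for the application to \autoref{thm:local-elliptic} (where the lemma has already been applied) but not the proposition as stated; the paper's cohomological argument applies to an arbitrary closed nondegenerate logarithmic two-form and closes this loophole.

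For part (2) your outline coincides with the paper's, and your residue computation $(by\,dx-ax\,dy)/f_z$ is correct. However, the two load-bearing claims---that $\cohlgy[1]{\Rforms{\D_0}}[0]$ is one-dimensional and that $[\beta_0]\neq 0$---are exactly where the content lies, and you only gesture at them. The paper obtains both at once from Eriksen's theorem, which asserts that each class is represented by a \emph{unique} form $f^{-1}h\,\hook{E}(dx\wedge dy\wedge dz)$ with $h$ of degree $k-a-b-c$; uniqueness gives nonvanishing, and the same statement for general $k$ is precisely what drives part (1), so the single citation does double duty. If you wish to avoid it, note that nonvanishing of $[\omega_\lambda]$ also follows from the paper's earlier proposition that the local logarithmic class of a log symplectic form at a normal singular point is never zero; combined with one-dimensionality this finishes the argument without analysing the residue. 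Your proposed derivation of one-dimensionality via $R\pi_*$ of the de Rham complex on the minimal resolution would need to be carried out carefully (pushforwards of forms from a resolution do not obviously compute the reflexive de Rham complex of a normal surface singularity); as written it is an assertion rather than a proof.
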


\begin{proof}
Let $\D_0 \subset \CC^3$ be the intersection of $\D$ with the hyperplane $w=0$, so that $\D = \CC\times \D_0$.  By the K\"unneth theorem for logarithmic de Rham cohomology~\cite[Lemma 2.2]{Castro-Jimenez1996}, the pullback along the projection $(\CC^4,\D)\to (\CC^3,\D_0)$ provides an isomorphism
\[
\cohlgy[2]{\CC^4,\log \D}[dR,0] \cong \cohlgy[2]{\CC^3,\log\D_0}[dR,0]
\]
Because of the holomorphic Poincar\'e lemma, the complex $\forms{\CC^3}$ is exact in positive degrees.  Hence the residue exact sequence~\eqref{eqn:res} gives an isomorphism
\[
\cohlgy[2]{\CC^3,\log\D_0}[dR,0] \cong \cohlgy[1]{\Rforms{\D_0}}[0]
\]
with the cohomology of the regular differential forms on $\D_0$.  But since $\D_0$ is normal and $\Rforms{\D_0}$ is reflexive, we have $\Rforms{\D_0} \cong (\forms{\D_0})^{\vee\vee} = \sHom{\der{\D_0},\sO{\D_0}}$.  The cohomology of this complex was computed in~\cite[Theorem 5.2]{Eriksen2009}.  From that result, we see that every cohomology class in $\cohlgy[2]{\CC^3,\log\D_0}[0]$ is represented by a unique element of the form
\[
\omega_h = f^{-1}h\, \hook{E}(dx\wedge dy\wedge dz)
\]
where $h \in \CC[x,y,z]$ is quasi-homogeneous of degree $k-a-b-c$.

Hence every closed logarithmic form $\omega \in \forms[2]{\CC^4}(\log\D)$ may be written as
\[
\omega = f^{-1}h \, \hook{E}(dx\wedge dy\wedge dz) + d\alpha
\]
where $\alpha \in \forms[1]{\CC^4}(\log\D)$.  Since $\D$ is normal, $\forms[1]{\CC^4}(\log\D)$ is generated by logarithmic differentials, so that $\alpha = g f^{-1}df + \beta $ with $g \in \sO{\CC^4}$ and $\beta\in \forms[1]{\CC^4}$ holomorphic.  

We must determine when such an $\omega$ is nondegenerate.  Using the identity $\lie{E}f = k f$ and the fact that $f$ and $df$ vanish at the origin, one easily computes
\begin{align*}
\omega^2 &= 2k hf^{-1}\, dx\wedge dy \wedge dz \wedge dg  \mod (x,y,z) \cdot \forms[4]{\CC^4}(\D).
\end{align*}
Hence $\omega$ can only be nondegenerate if the top degree form $hdx\wedge dy \wedge dz\wedge dg$ is nonvanishing near the origin.   This condition forces the quasi-homogeneous polynomial $h$ to be a nonzero constant.  Since its degree is equal to $k-a-b-c$, we must have $k=a+b+c$.  The first statement now follows from K.~Saito's classification of such quasi-homogeneous polynomials~\cite{Saito1971}; see also the formulae in~\cite[Proposition 2.3.2]{Etingof2010}.

For the second statement, we note that, since the choices of $g$ and $\beta$ do not affect the cohomology class of $\omega$, we may as well take $g = k^{-1}w$ and $\beta=0$.  Thus every log symplectic form on $(\CC^4,\D)$ is cohomologous at $0$ to the form
\[
\omega = (k\lambda f)^{-1}\, \hook{E}(dx\wedge dy \wedge dz) + k^{-1} f^{-1} \, df \wedge dw
\]
for a unique constant $\lambda \in \CC^\times$.  Inverting $\omega$, we obtain the Poisson brackets of \eqref{eqn:elliptic-C4}, as required.
\end{proof}

\subsection{Basic properties of purely elliptic structures}

\subsubsection{Irreducible components of $\Pois{\X}$}
Recall from \autoref{sec:irred} that the class of log symplectic structures on a compact complex manifold $\X$ is stable under deformations, giving irreducible components in the space $\Pois{\X}$ of Poisson structures on $\X$.  The same is true for the more restricted class of elliptic structures:
\begin{proposition}\label{prop:ell-stable}
Let $\X$ be a compact complex of dimension $2n$.  Then the set of purely elliptic log symplectic structures on $\X$ gives a Zariski open subset of $\Pois{\X}$, and hence its closure is a union of irreducible components.
\end{proposition}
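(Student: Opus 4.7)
The plan is to argue in parallel with the preceding lemma on log symplectic structures, but to impose two further Zariski open conditions which together capture pure ellipticity.  Write $\U \subset \Pois{\X}$ for the Zariski open subset of log symplectic Poisson structures furnished by that lemma; for $\ps \in \U$ the tensor $\ps_\sA \in \cohlgy[0]{\X,\ext[2]{\sA}}$ of \autoref{sec:gorenstein} depends algebraically on $\ps$, since unwinding the definition shows that $h \cdot \ps_\sA$ is polynomial in $\ps$ for a local defining equation $h$ of $\D$, and the logarithmic nature of $\ps$ makes the quotient regular on $\U$.

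By \autoref{rem:ell-pfaff}, a structure $\ps \in \U$ is purely elliptic exactly when (a) its degeneracy divisor $\D$ is normal, and (b) the rank of $\ps_\sA(p)$ equals $2n-2$ at every singular point $p \in \D_{sing}$.  Since $\ps_\sA^n$ vanishes precisely on $\D_{sing}$, condition (b) may be rephrased as the requirement that $\ps_\sA^{n-1}$ be nowhere vanishing on all of $\X$---off $\D_{sing}$, nonvanishing of $\ps_\sA^n$ forces this automatically.  The plan for (b) is to form the incidence scheme
\[
\Sigma = \{(\ps,p) \in \U \times \X : \ps_\sA^{n-1}(p) = 0\},
\]
which is Zariski closed in $\U \times \X$; compactness of $\X$ makes the projection $\U \times \X \to \U$ proper, so the image of $\Sigma$ is Zariski closed in $\U$ and its complement is the open subset where (b) holds.

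Condition (a) is handled by upper semi-continuity of fibre dimension applied to the proper algebraic family $\{\D_{sing}(\ps)\}_{\ps \in \U}$ of subschemes of $\X$: the locus $\{\ps : \dim \D_{sing}(\ps) \ge 2n - 2\}$ is Zariski closed, and its complement is precisely the set where $\D(\ps)$ is normal.  Intersecting the two resulting open loci with $\U$ produces a Zariski open subset of $\Pois{\X}$ consisting exactly of the purely elliptic structures, and the same closure argument as in the earlier lemma then yields the second assertion of the proposition.

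The main subtlety to be careful about is that conditions (a) and (b) are genuinely independent of one another: in the normal-crossings model $\D = \{xy=0\} \subset \CC^4$, the bivector $\ps_\sA$ already has rank $2n-2$ at every singular point, yet $\D$ fails to be normal, so both open conditions really must be enforced separately.  Apart from this, the proof reduces to a routine combination of properness, semi-continuity, and the Pfaffian characterization of \autoref{rem:ell-pfaff}.
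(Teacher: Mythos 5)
Your argument is correct and follows the same route as the paper: both reduce pure ellipticity, via \autoref{rem:ell-pfaff}, to normality of the degeneracy divisor together with nonvanishing of the Pfaffian $\ps_\sA^{n-1}$, and observe that both conditions are Zariski open. The paper simply asserts this openness in one line, whereas you supply the standard justifications (the incidence variety with proper projection, and upper semicontinuity of $\dim \D_{sing}$); this is a harmless elaboration rather than a different method.
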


\begin{proof}
According to \autoref{rem:ell-pfaff}, ellipticity is equivalent to requiring that $\D$ be normal and the Pfaffian $\ps_{\sA}^{n-1}$ be nonvanishing, where $\ps_{\sA}$ is defined as in \autoref{sec:gorenstein}.  Both of these conditions are open in the Zariski topology on $\cohlgy[0]{\X,\der[2]{\X}}$, as required.
\end{proof}

\subsubsection{The canonical bundle of the singular locus}

Let $(\X,\D,\omega)$ be a purely elliptic log symplectic manifold of dimension $2n$ and let $\ps$ be the corresponding Poisson structure.  According to \autoref{thm:local-elliptic}, the locus where $\ps$ has rank $2n-4$ is a manifold, and is equal to the reduced space $\Y = (\D_{sing})_{red} \subset \X$ underlying the singular locus of $\D$.  It therefore carries a regular Poisson structure whose symplectic leaves have codimension one in $\Y$.   As mentioned in \autoref{rem:modres}, $\Y$ also carries a natural top-degree polyvector field, which is locally given by the formula $\ps^{n-2}\wedge Z|_\Y$ where $Z$ is the modular vector field in a local trivialization of $\can[\X]$.  The ellipticity condition ensures that this tensor is nonvanishing.  We therefore have the
\begin{lemma}
The canonical bundle of $\Y$ is trivial.
\end{lemma}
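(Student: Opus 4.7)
The plan is to produce a nowhere-vanishing global section of $\acan[\Y] = \der[2n-3]{\Y}$, using that $\dim \Y = 2n-3$ and that $\Y$ is a smooth manifold by \autoref{thm:local-elliptic} (combined with Weinstein's splitting theorem when $\dim \X > 4$).  The candidate is already described in the paragraph preceding the statement: for a local trivialization $\mu \in \can[\X]$ on an open $\U \subset \X$ with modular vector field $Z_\mu$, set
\[
\eta_\mu := \rbrac{\ps^{n-2}\wedge Z_\mu}\big|_{\Y \cap \U}.
\]
Since $\Y$ is a Poisson subspace, $\ps$ is tangent to $\Y$; and $Z_\mu$, being a $\ps$-symmetry preserving the intrinsic rank stratification, is tangent to $\Y$ as well.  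Hence $\eta_\mu$ is a section of $\der[2n-3]{\Y \cap \U}$, which is a line bundle.

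The key step is gauge-independence of the construction.  A second trivialization $\mu' = g\mu$ changes the modular vector field by a Hamiltonian correction $Z_{\mu'} = Z_\mu \pm \pss(d\log g)$.  Combined with the standard identity
\[
\ps^{n-2}\wedge \pss(d\alpha) = \tfrac{1}{n-1}\hook{d\alpha}\ps^{n-1},
\]
valid for any $\alpha \in \sO{\X}$, this gives
\[
\eta_\mu - \eta_{\mu'} = \pm \tfrac{1}{n-1}\rbrac{\hook{d\log g}\ps^{n-1}}\big|_\Y.
\]
But by \autoref{thm:local-elliptic}, $\ps$ has rank $2n-4$ along $\Y$, so $\ps^{n-1}|_\Y = 0$ and the difference vanishes.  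The local sections therefore glue to a global $\eta \in \cohlgy[0]{\Y,\acan[\Y]}$.

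Nonvanishing is immediate from the local normal form.  Weinstein's splitting theorem reduces to a neighbourhood of an elliptic point in the four-dimensional case, and in the coordinates of \autoref{thm:local-elliptic}, the choice $\mu = dw\wedge dx\wedge dy\wedge dz$ makes $Z_\mu = \cvf{w}$; since $n=2$, $\ps^{n-2} = 1$, so $\eta|_\Y = \cvf{w}$ is manifestly nonvanishing along $\Y = \{x = y = z = 0\}$.  Therefore $\eta$ trivializes $\acan[\Y]$, and dually $\can[\Y] \cong \sO{\Y}$.  The main subtlety is the gauge-independence step, which relies crucially on the rank-drop condition built into the definition of ellipticity; everything else is bookkeeping once the local normal form is available.
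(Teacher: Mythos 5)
Your proof is correct and follows essentially the same route as the paper: the trivializing section is the same polyvector field $\ps^{n-2}\wedge Z|_\Y$, with nonvanishing read off from the local normal form of \autoref{thm:local-elliptic}. The only difference is that you verify the gauge-independence under $\mu \mapsto g\mu$ by an explicit computation using $\ps^{n-1}|_\Y = 0$, whereas the paper delegates this well-definedness to the residue construction cited in \autoref{rem:modres}; your computation is a correct substitute for that citation.
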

If $\X$ is projective, this puts strong constraints on $\Y$: when $\dim \X = 4$, each connected component of $\Y$ is an elliptic curve.  Meanwhile, when $\dim \X = 6$, the manifolds $\Y$ give part of the classification~\cite{Druel1999}.

\subsubsection{Topological constraints}

According to the local normal form, we may decompose $\Y$ as a disjoint union
\[
\Y = \Y_6 \sqcup \Y_7 \sqcup \Y_8
\]
of open submanifolds, where $\Y_i$ denotes the set of points at which $\D$ has singularities of type $\tE_{i}$. 

Considering the Milnor numbers of the elliptic singularities in~\autoref{tab:e678}, we see that the length of $\D_{sing}$ along $\Y_{i}$ is equal to $i+2$.  Hence we have the following formula for the fundamental class in singular homology:
\[
[\D_{sing}] = 8[\Y_6] + 9[\Y_7]+10[\Y_8] \in \hlgy[\dim\X-6]{\X,\ZZ}.
\]
Combining this result with \autoref{thm:fano-singular}, we obtain the
\begin{proposition}\label{prop:678charclass}
For a purely elliptic log symplectic manifold $(\X,\D,\omega)$, we have the equality
\[
8[\Y_6] + 9[\Y_7] + 10[\Y_8] = (c_1c_2-c_3)\cap [\X] \in \hlgy[\dim \X - 6]{\X,\ZZ},
\]
where $\Y_i \subset \X$ is the locus where $\D$ has singularities of type $\tE_i$.
\end{proposition}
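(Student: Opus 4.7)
The plan is to compute $[\D_{sing}]$ in two different ways and equate them: once using the local normal form to express it as a weighted sum of the strata $[\Y_i]$, and once using the Chern-class formula of \autoref{thm:fano-singular}.

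First I would pin down the set-theoretic and scheme-theoretic structure of $\D_{sing}$ near the elliptic locus. At any point $p \in \Y_i$, Weinstein's splitting theorem together with \autoref{thm:local-elliptic} furnishes a local product decomposition $(\X,\D) \cong (\U,\emptyset) \times (\CC^4, \CC \times \D_0)$, where $\U$ is symplectic of dimension $2n-4$ and $\D_0 \subset \CC^3$ is the cone over the quasi-homogeneous polynomial $f$ of type $\tE_i$ listed in \autoref{tab:e678}. Because $f$ only involves $(x,y,z)$, the defining ideal of $\D_{sing}$ pulls back to $\sO{\U} \boxtimes (f,\partial_x f,\partial_y f,\partial_z f)$, so $\D_{sing}$ is, locally at $p$, a product $\U \times \CC \times \Sigma_i$, where $\Sigma_i$ is the fat point at the origin of $\CC^3$ cut out by the Jacobian ideal. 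In particular the reduced stratum $\Y_i$ is a smooth submanifold of codimension three, and the local decomposition also shows that the singularity type $\tE_i$ is locally constant on $\Y = (\D_{sing})_{red}$, so that $\Y = \Y_6 \sqcup \Y_7 \sqcup \Y_8$ is a disjoint union of open and closed smooth submanifolds.

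Second, I would compute the multiplicity of $\D_{sing}$ along $\Y_i$ as the length of the artinian ring $\CC\{x,y,z\}/(f, \partial_x f, \partial_y f, \partial_z f)$ governing the transverse slice. The key observation is that each $f$ in \autoref{tab:e678} is quasi-homogeneous, so the Euler identity $\lie{E}f = (a+b+c)f$ forces $f$ to lie in its own Jacobian ideal; hence the Tjurina and Milnor numbers agree, and the length of $\Sigma_i$ equals the Milnor number $8$, $9$, or $10$ recorded in the table. Consequently, as analytic cycles,
\[
[\D_{sing}] \;=\; 8[\Y_6] + 9[\Y_7] + 10[\Y_8].
\]

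Third, I would invoke \autoref{thm:fano-singular}: since $\D_{sing}$ has pure codimension three (established in step one), the fundamental class of the singular locus is $(c_1c_2 - c_3) \cap [\X]$, and equating the two expressions for $[\D_{sing}]$ yields the claim.

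The only conceptually delicate step is the Milnor-number computation in step two, but this is automatic from quasi-homogeneity; everything else is a direct consequence of the local normal form and the cited Chern-class formula.
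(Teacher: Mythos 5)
Your proposal is correct and follows essentially the same route as the paper: decompose $\Y$ into the open strata $\Y_i$ via the local normal form, identify the multiplicity of $\D_{sing}$ along $\Y_i$ with the Milnor number of the transverse singularity (the paper states this directly, while you supply the justification that quasi-homogeneity forces $f$ into its Jacobian ideal so that the Tjurina and Milnor numbers coincide), and then equate $[\D_{sing}]$ with $(c_1c_2-c_3)\cap[\X]$ using \autoref{thm:fano-singular}. No gaps.
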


\section{Feigin--Odesskii structures and Fano fourfolds}

In~\cite{Feigin1989}, Feigin and Odesskii introduced a remarkable collection of Poisson structures on projective space that are associated to a given elliptic curve $\E$. In these examples, the projective space is interpreted~\cite{Feigin1998,Polishchuk1998} as a moduli space parametrizing certain configurations of vector bundle maps over $\E$.

Among this collection are the families $q_{d,1}$ of Poisson structures on $\PP[d-1]$, parametrized up to rescaling by the choice of an elliptic normal curve $\E \subset \PP[d-1]$.  When $d = 2n+1$ is odd, these Poisson structures on $\PP[2n]$ are log symplectic; the degeneracy hypersurface $\D \subset \PP[2n]$ is the $(n-1)$-secant variety $\Sec[n-1]{\E}$ of the elliptic curve $\E$, i.e.~the closure of the union of all of the $(n-1)$-planes in $\PP[2n]$ that intersect $\E$ in exactly $n$ points.  It follows from the results in \cite[Section 8]{Gualtieri2013a} that the generic singular points are elliptic of type $\tE_6$.

\label{sec:ell-fano}
The rest of the paper is devoted to the proof of the following theorem (stated in the introduction) which characterizes these Poisson structures in the four-dimensional case:
\fanos*

The proof is organized as follows: in \autoref{sec:fano-proof-num}, we apply the topological constraint of \autoref{prop:678charclass} to the manifolds $\X$ in question.  In so doing, we immediately rule out the existence of purely elliptic log symplectic structures on the smooth quadric or on a product $\PP[1] \times \W$.  The remaining cases---$\PP[4]$ and cubic fourfolds---are dealt with in \autoref{sec:ell-proj} and \autoref{sec:ell-cubic}, respectively.

For the rest of the paper, $(\X,\D,\omega)$ is a purely elliptic log symplectic fourfold, and $\Y_6,\Y_7,$ and $\Y_8$ are the loci of $\tE_6$, $\tE_7$ and $\tE_8$ singularities, respectively.

\subsection{Numerical constraints}
\label{sec:fano-proof-num}

Suppose given an integral cohomology class $H \in \cohlgy[2]{\X,\ZZ}$.  We define integers
\[
a_i = \deg_H \Y_i = \int_{\Y_i} H
\]
for $i = 6,7,8$.  Thus each $a_i$ is the total degree of a collection of elliptic curves in $\X$, and from \autoref{prop:678charclass}, we obtain the equation
\begin{align}
8a_6 + 9a_7 + 10a_8 = H(c_1c_2-c_3) \cap [\X] \in \ZZ \label{eqn:ell-num}
\end{align}
If $H$ is a nef class, then each of the integers $a_6,a_7$ and $a_8$ is nonnegative.  We now specialize to the manifolds in \autoref{thm:p4-ell}.

\subsubsection*{Hypersurface case}

Let $\X \subset \PP[5]$ be a smooth hypersurface of degree $d \le 3$, and let $H \in \cohlgy[2]{\X,\ZZ}$ be the hyperplane class.   
Using the exact sequence for for the normal bundle, we have the total Chern class $c(\X) = (1+H)^6(1+dH)^{-1}$, from which one readily computes that
\[
c_1c_2-c_3 = (6d^2-36d+70)H^3.
\]
Meanwhile, $H^4 \cap [\X] = d$, and hence \eqref{eqn:ell-num} reads
\begin{align}
8a_6+9a_7+10a_8 = 6d^3-36d^2+70d \label{eqn:hypsurf-num}
\end{align}
The nonnegative integer solutions of this equation are displayed in \autoref{tab:hypsurf-sols}.  Since an elliptic curve in projective space has degree at least three, the solutions for $(a_6,a_7,a_8)$ with $d=2$ cannot be degrees of collections of elliptic curves.  Therefore a smooth quadric fourfold admits no purely elliptic structures.

\begin{table}[h]
\begin{center}
\caption{Nonnegative integer solutions of \eqref{eqn:hypsurf-num} with $d \le 3$}\label{tab:hypsurf-sols}
\begin{tabular}{c|c}
$d$ & $(a_6,a_7,a_8)$  \\
\hline 
1 & $(5,0,0)$ or $(0,0,4)$  \\
\hline
2 & $(1,4,0)$, $(2,2,1)$ or $(3,0,2)$ \\ \hline
3 & $(6,0,0)$
\end{tabular}
\end{center}
\end{table}

\subsubsection*{Product case}

Now suppose that $\X = \PP[1]\times \W$, where $\W \subset \PP[4]$ is a smooth hypersurface of degree $d < 5$, i.e.~a hypersurface that is Fano.  Let $A,B \in \cohlgy[2]{\X,\ZZ}$ be the pullbacks of the hyperplane classes on $\PP[1]$ and $\PP[4]$, respectively.  Thus $A^2 = 0$, $B^4 =0$ and $AB^3  \cap [\X]= d$.  The total Chern class of $\X$ is given by the formula $c(\X) = (1+A)^2(1+B)^5(1+dB)^{-1}$.  We conclude that
\[
c_1c_2-c_3 = 2(d-5)^2AB^2 + (5d^2-25d+40)B^3.
\]
Considering \eqref{eqn:ell-num} with the nef classes $A$ and $B$ we obtain the equations
\begin{align}
\begin{aligned}
8a_6+9a_7+10a_8  &= 2d(d-5)^2 \\
8b_6+9b_7+10b_8 &= d(5d^2-25d+40)
\end{aligned}\label{eqn:ab-eqns}
\end{align}
for the bidegrees $(a_i,b_i)$ of the hypothetical collections $\Y_6,\Y_7$ and $\Y_8$ of elliptic curves.
The solutions of these equations for which $a_i$ and $b_i$ are nonnegative integers are shown in \autoref{tab:numsols}.
\begin{table}[h]
\begin{center}
\caption{Nonnegative integer solutions of \eqref{eqn:ab-eqns}}\label{tab:numsols}
\begin{tabular}{c|c|c}
$d$ & $(a_6,a_7,a_8)$ & $(b_6,b_7,b_8)$ \\
\hline 
1 & $(0,0,2)$ & $(4,0,0)$  \\
\hline
2 & $(0,4,0)$, $(1,2,1)$ or $(2,0,2)$ &  $(0,0,2)$ \\
 \hline
3 & $(3,0,0)$ & $(0,0,3)$ \\
\hline
4 & $(1,0,0)$ & many
\end{tabular}
\end{center}
\end{table}

We claim that none of these solutions can represent the bidegrees of collections of elliptic curves, and hence  there can be no purely elliptic log symplectic structures on $\X$.

 Indeed, if $d=1,2$ or $3$, then at least one of the loci $\Y_i$ has degree zero with respect to $B$, but positive degree with respect to $A$. Such a curve is necessarily a (nonempty) union of fibres of the projection $\X \to \W$, which are copies of $\PP[1]$.  Hence they are not elliptic curves.  
 
 Likewise, if $d=4$, then $\Y_6$ has degree one with respect to $A$.  Since $A$ is nonnegative on every connected component of $\Y_6$, there must be a connected component $\C\subset \Y_6$ that has degree one with respect to $A$.  But then $\C$ would map isomorphically onto $\PP[1]$ under the projection $\X \to \PP[1]$, and hence it is not elliptic.  

\subsection{Projective space}
\label{sec:ell-proj}

We now consider purely elliptic log symplectic structures on $\PP[4]$.  Let $\D$ be the degeneracy hypersurface, and $\Y$ the locus where the Poisson structure vanishes.  Consulting \autoref{tab:hypsurf-sols}, we see that $\Y$ has degree at most five. Since each connected component is an elliptic curve, it must have degree at least three.  Hence $\Y$ must be connected, and there are two possibilities: either $\Y$ has degree five, in which case $\D$ has $\tE_6$ singularities; or $\Y$ has degree four, in which case $\D$ has $\tE_8$ singularities.    We now prove that  the case of a curve of degree four is impossible, while in the degree five case the only possibilities are the Feigin--Odesskii Poisson structures of type $q_{5,1}$

\subsubsection{Curve of degree five}

We first assume that the degree of $\Y$ is equal to five, so that $\D$ has $\tE_6$ singularities.  In this case we have the
\begin{lemma}
The hypersurface $\D \subset \PP[4]$ must contain the secant variety $\Sec{\Y}$.
\end{lemma}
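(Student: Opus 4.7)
The plan is to show that every proper secant line of $\Y$ lies in $\D$, from which $\Sec{\Y} \subseteq \D$ follows by taking closures. Let $p_1, p_2 \in \Y$ be distinct points and let $L$ be the line they span. By \autoref{thm:local-elliptic} and the hypothesis that the singularities of $\D$ along $\Y$ are all of type $\tE_6$, each $p_i$ admits local coordinates $(w,x,y,z)$ in which $\D$ is cut out by a polynomial of the form $f(x,y,z) = x^3+y^3+z^3 + \tau xyz$ for some $\tau \in \CC^\times$, with $\Y$ corresponding to the $w$-axis. The lowest-order terms of $f$ are cubic, so $\D$ has multiplicity exactly $3$ at each of $p_1$ and $p_2$.

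The key step is then a B\'ezout-style comparison. Since a hypersurface of multiplicity $m$ at a point contributes at least $m$ to the intersection number with any line through that point, we obtain
\[
L \cdot \D \;\geq\; \mathrm{mult}_{p_1}(\D) + \mathrm{mult}_{p_2}(\D) \;=\; 6.
\]
On the other hand, $\D$ is an anticanonical divisor of $\PP[4]$, so it has degree $5$, and hence $L \cdot \D = 5$ whenever $L \not\subset \D$. The strict inequality $6 > 5$ forces $L \subset \D$. To pass from this to $\Sec{\Y} \subseteq \D$, I would observe that since $\Y$ is an irreducible non-degenerate elliptic curve in $\PP[4]$, the lines through pairs of distinct points of $\Y$ form a Zariski-dense subfamily of $\Sec{\Y}$, and $\D$ is closed in $\PP[4]$.

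There is no genuine obstacle here; the entire content rests on the $\tE_6$ multiplicity bound supplied by \autoref{thm:local-elliptic} together with the numerical coincidence $2 \cdot 3 > 5 = \deg \D$. This is precisely what makes the $\tE_6$/quintic pairing rigid, and it foreshadows the subsequent identification of $\D$ with the Feigin--Odesskii secant quintic $\Sec{\Y}$.
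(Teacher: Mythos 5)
Your proof is correct and is essentially the paper's own argument: both rest on the observation that the $\tE_6$ normal form forces $\D$ to have multiplicity three along $\Y$, and then on the numerical fact that a quintic cannot meet a line with total multiplicity $6$ unless it contains it. The paper phrases this as the restriction of the defining section to the secant line being a section of $\sO{\bL}(5-3-3)\cong\sO{\bL}(-1)$, hence zero, which is the same computation as your B\'ezout inequality.
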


\begin{proof}
In light of the local normal form for an $\tE_6$ singularity (see \autoref{tab:e678}) the section $s=\ps^2 \in \cohlgy[0]{\PP[4],\acan[{\PP[4]}]}$ that cuts out $\D$ must vanish to order three at every point of $\Y$.  Suppose that $p,q \in \Y$ are distinct points, and $\bL\subset \PP[4]$ is the secant line that joins them.  Then the restriction of $s$ to $\bL$ is a section of $\acan[{\PP[4]}]|_\bL(-3p-3q)\cong \sO{\bL}(-1)$, and hence it is identically zero, so that $\bL\subset \D$.  It follows that every secant line of $\Y$ is contained in $\D$, as required.
\end{proof}

\begin{corollary}
$\Y$ is an elliptic normal curve, and $\D = \Sec{\Y}$
\end{corollary}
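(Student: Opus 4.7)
The plan is to first establish that $\Y$ spans all of $\PP[4]$---so that it is an elliptic normal quintic---and then to upgrade the inclusion $\Sec{\Y} \subseteq \D$ from the preceding lemma to an equality by a degree count.

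For the nondegeneracy step, I argue by contradiction. Since $\Y$ is a smooth connected genus-one curve of degree five, it cannot lie in a $\PP[1]$, and it cannot lie in a $\PP[2]$ (a smooth plane quintic has genus six). The only remaining degenerate possibility is that $\Y$ spans a hyperplane $H \cong \PP[3] \subset \PP[4]$. In that case every secant line of $\Y$ lies in $H$, and since $\Y$ is nondegenerate in $H$, the standard fact that the secant variety of a nondegenerate curve in $\PP[3]$ fills its ambient space gives $\Sec{\Y} = H$. By the preceding lemma $H \subseteq \D$, so $H$ is an irreducible component of $\D$, and we may write $\D = H + \D'$ for an effective divisor $\D'$ of degree four on $\PP[4]$. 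But then $H \cap \D'$ is a surface contained in $\D_{sing}$, contradicting the fact that $\dim \D_{sing} = 1$ by \autoref{thm:codim3}.

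For the degree step, since $\Y$ is now known to be an elliptic normal quintic in $\PP[4]$, the classical formula for the degree of the secant variety of a smooth nondegenerate curve of degree $d$ and genus $g$ in $\PP[4]$ (whose secant variety is a hypersurface) gives
\[
\deg \Sec{\Y} = \binom{d-1}{2} - g = \binom{4}{2} - 1 = 5.
\]
Meanwhile $\D$ is anticanonical on $\PP[4]$ and therefore also has degree five, so the inclusion $\Sec{\Y} \subseteq \D$ of two quintic hypersurfaces in $\PP[4]$ forces $\D = \Sec{\Y}$.

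I expect the main obstacle to be the nondegeneracy step. Ruling out a plane or a line is a routine genus calculation, but excluding the hyperplane case requires combining the secant-line inclusion of the previous lemma with the codimension-three bound on $\D_{sing}$ coming from normality of $\D$; once this is in place, the degree match finishes the proof immediately.
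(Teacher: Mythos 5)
Your proof is correct and follows essentially the same route as the paper: rule out the degenerate case by showing that $\Sec{\Y}$ would then be a hyperplane contained in $\D$ (contradicting the normality/irreducibility of $\D$), and conclude in the nondegenerate case by matching the degree of $\Sec{\Y}$ against the quintic $\D$. The only cosmetic difference is that the paper justifies $\Sec{\Y}=\H$ in the degenerate case by realizing $\Y$ as the projection of an elliptic normal quintic from a point off its secant variety, whereas you invoke the standard fact that the secant variety of a nondegenerate space curve fills $\PP[3]$; both are fine.
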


\begin{proof}
If $\Y$ is not contained in a hyperplane, it is an elliptic normal curve and its secant variety $\Sec{\Y}$ is an irreducible quintic hypersurface.  Since $\D$ is a quintic containing $\Sec{\Y}$ we must have $\D = \Sec{\Y}$.

On the other hand, we claim that if $\Y$ is contained in a hyperplane  $\H \subset \PP[4]$, then $\Sec{\Y} = \H$, which, by the previous lemma, contradicts the fact that $\D$ is irreducible.  To see this, we note that, according to \cite{Ellingsrud1981,Hulek1983}, the curve $\Y$ is the linear projection to $\H$ of an elliptic normal curve $\Y' \subset \PP[4]$ from a point $p \in \PP[4] \setminus \Sec{\Y'}$.  Under this projection, every secant line of $\Y'$ maps to a secant line of $\Y$, and hence $\Sec{\Y}$ is the projection of $\Sec{\Y'}$, which is the whole hyperplane $\H$.
\end{proof}

We have now seen that the degeneracy divisor must be the secant variety of an elliptic normal curve.  To complete the classification in the degree five case, it remains to prove the  following
\begin{proposition}
Let $\Y \subset \PP[4]$ be an elliptic normal curve and $\D = \Sec{\Y}$ its secant variety.  Then the only purely elliptic log symplectic structures on $(\PP[4],\D)$ are the ones in the Feigin--Odesskii family $q_{5,1}$.
\end{proposition}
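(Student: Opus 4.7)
My strategy is to apply the local rigidity of \autoref{thm:local-elliptic} at each point of $\Y$, globalize it using the compactness of $\Y$, and match with the Feigin--Odesskii family $q_{5,1}$ via a global Moser argument.

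\emph{Step 1: local invariants are constant on $\Y$.} At each $p \in \Y$, \autoref{thm:local-elliptic} presents $\omega$ locally with parameters $(\tau(p), \lambda(p)) \in \CC^\times \times \CC^\times$. The projectivized tangent cone to $\D = \Sec{\Y}$ at $p$ is an elliptic cubic $\E_p \subset \PP[2]$, and classical projective geometry of secant varieties of elliptic normal curves identifies $\E_p$, up to projective equivalence, with $\Y$ itself. Hence $j(\tE_{6,\tau(p)}) = j(\Y)$, pinning $\tau(p)$ to a finite set depending only on $\Y$. By \autoref{def:ell-point} the modular vector field $Z$ restricts to a nowhere-vanishing holomorphic vector field on the elliptic curve $\Y$, so comparing $Z|_\Y$ with a fixed nonvanishing global vector field on $\Y$ yields a holomorphic function $\Y \to \CC^\times$, necessarily a constant. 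This forces $\lambda(p)$ to be globally constant.

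\emph{Step 2: the global cohomology class is determined.} The residue exact sequence $0 \to \forms[2]{\PP[4]} \to \forms[2]{\PP[4]}(\log \D) \to \Rforms[1]{\D} \to 0$, together with the vanishing $\cohlgy[0]{\PP[4], \forms[2]{\PP[4]}} = 0$, shows that $\omega$ is pinned down by its residue on $\D$. In turn, this residue is controlled by the local invariants $(\tau, \lambda)$ along $\Y$, so the global class $[\omega] \in \cohlgy[2]{\PP[4], \log \D}[dR]$ is fixed once those invariants are fixed.

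\emph{Step 3: match with $q_{5,1}$ via global Moser.} For fixed $\Y$, the Feigin--Odesskii family $q_{5,1}$ realizes every admissible pair of invariants, together with a translation parameter accounting for the action of $\Aut{\PP[4], \D}$. Given any purely elliptic $\omega$, I would select $\omega_{\mathrm{FO}} \in q_{5,1}$ with matching local invariants, so that $[\omega] = [\omega_{\mathrm{FO}}]$. The linear pencil $\omega_t = (1-t)\omega + t\omega_{\mathrm{FO}}$ then has constant cohomology class, so the constant form $\dot\omega_t = \omega_{\mathrm{FO}} - \omega$ is globally exact as a logarithmic $1$-form. The Moser construction of \autoref{lem:local-moser} produces a global logarithmic vector field whose time-one flow---automatic on the compact manifold $\PP[4]$---transports $\omega$ to $\omega_{\mathrm{FO}}$.

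\emph{Main obstacle.} The most delicate step is Step~2: verifying that the global cohomology class $[\omega]$ is truly determined by the local invariants along $\Y$. This reduces to understanding $\cohlgy[0]{\D, \Rforms[1]{\D}}$ for the singular variety $\D = \Sec{\Y}$, which I would approach using the Gorenstein structure of $\Y \subset \D$ furnished by \autoref{thm:codim3}, together with the natural birational resolution of $\Sec{\Y}$ by the symmetric product of $\Y$. A secondary subtlety is the identification in Step~1 of the tangent cone of $\Sec{\Y}$ at $p \in \Y$ with $\Y$ itself, which is only projective rather than canonical, and must be tracked carefully in order to match parameters with the Feigin--Odesskii normalization.
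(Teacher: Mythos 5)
The heart of this proposition is your Step 2, and that is exactly the step you have not carried out. The assertion that the residue of $\omega$ is ``controlled by the local invariants $(\tau,\lambda)$ along $\Y$'' is unjustified: knowing the germ of $\omega$ at each point of $\Y$ up to local isomorphism does not determine $\Res\omega$ as a global section of $\Rforms[1]{\D}$, and nothing in your argument bounds the space of regular one-forms on $\D$ compatible with a given collection of local models. The paper's proof consists precisely of closing this gap, and it does so in a way that makes your Steps 1 and 3 unnecessary. Since $\cohlgy[0]{\PP[4],\forms[2]{\PP[4]}}=0$, a logarithmic two-form is \emph{literally equal} to any other with the same residue, so one only needs to show that the residues of purely elliptic log symplectic forms on $(\PP[4],\D)$ span a one-dimensional space. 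For this the paper uses the resolution you mention, but the actual computation is a Hodge-theoretic one: the local normal form (via \autoref{ex:e6tilde}) shows that $\Res\omega$ extends to a holomorphic one-form on the blowup $\tD$ of $\D$ along $\Y$, which fibres as a $\PP[1]$-bundle over $\Y^{[2]}$, itself a $\PP[1]$-bundle over $\Pic[2]{\Y}\cong\Y$; hence $\cohlgy[0]{\tD,\forms[1]{\tD}}=\pi^*\cohlgy[0]{\Y,\forms[1]{\Y}}$ is one-dimensional. Every such form is therefore a scalar multiple of the Feigin--Odesskii one, and the family $q_{5,1}$ is closed under rescaling. No matching of local invariants and no Moser isotopy is required: two log symplectic forms with proportional residues already differ by a holomorphic two-form on $\PP[4]$, i.e.\ by zero.

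Two secondary problems. In Step 1, the parameter $\lambda$ of \autoref{thm:local-elliptic} is a coefficient of the biderivation (essentially a period of $\omega$), while the modular vector field in the normal form is $\cvf{w}$ for \emph{every} $\lambda$; so constancy of $Z|_\Y$ does not by itself pin down $\lambda(p)$, and the identification of the tangent cone of $\Sec{\Y}$ at $p\in\Y$ with a plane cubic projectively equivalent to $\Y$ is asserted rather than proved. In Step 3, even granting equality of global classes, the linear pencil $\omega_t$ may degenerate for intermediate $t$, and the claim that $q_{5,1}$ realizes ``every admissible pair of invariants'' is unsupported. These could perhaps be repaired, but the argument as written does not establish the proposition.
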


\begin{proof}
Since $\cohlgy[0]{\PP[4],\forms[2]{\PP[4]}}=0$,  logarithmic two-forms on $\PP[4]$ are uniquely determined by their residues.  So, it is enough to show that there is, up to rescaling, only one possibility for the residue of a purely elliptic log symplectic form on $(\PP[4],\D)$.  To do so, we use the well-known resolution of singularities of the secant variety, which we learned from \cite{GrafvBothmer2004}.

Let $\Y^{[2]}$ be the second symmetric power of $\Y$.  Then there is a natural rational map $\D \to \Y^{[2]}$ that sends a point $x \in \D$ contained in a secant line $\bL$ to the degree-two divisor defined by the intersection $\bL\cap \Y $.  This map extends to a regular map from the blowup $\tD$ of $\D$ along $\Y$, giving a $\PP[1]$-bundle  $\tD \to \Y^{[2]}$.

From the elliptic normal form \eqref{eqn:elliptic-C4} and the discussion in \autoref{ex:e6tilde}, it is clear that the residue of the log symplectic form must extend to a holomorphic one-form on $\tD$.  But the map that takes a degree-two divisor to its linear equivalence class makes $\Y^{[2]}$ into a $\PP[1]$-bundle over $\Pic[2]{\Y}\cong \Y$, and hence the composite map $\pi : \tD \to \Y^{[2]} \to \Y$ is an iterated $\PP[1]$ bundle.  It follows that the only holomorphic forms on $\tD$ are sections of $\pi^*\forms[1]{\Y} \cong \sO{\tD}$.  Since $\tD$ is connected and projective, the space of such sections is one-dimensional, as claimed.
\end{proof}

\subsubsection{Curve of degree four}

The possibility of any other purely elliptic log symplectic structure on $\PP[4]$ is ruled out by the following
\begin{theorem}
There does not exist a purely elliptic log symplectic structure on $\PP[4]$ whose singular locus $\Y$ is a curve of degree four.
\end{theorem}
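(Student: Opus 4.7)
The plan is to use the $\tE_8$ local normal form from \autoref{thm:local-elliptic} to produce a canonical splitting of the normal bundle of a hypothetical degree-four singular locus $\Y\subset\PP[4]$, and then extract a contradiction from a degree computation on the elliptic curve $\Y$.

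First, I would argue that $\sN := N_{\Y/\PP[4]}$ splits canonically as a direct sum of three line bundles.  By \autoref{thm:local-elliptic}, at every point $p\in\Y$ there are local coordinates $(w,x,y,z)$ with $\Y=\{x=y=z=0\}$, brackets $\{w,x\}=x$, $\{w,y\}=2y$, $\{w,z\}=3z$, and the remaining brackets determined by $f=x^6+y^3+z^2+\tau xyz$.  The $\tshf{\Y}\otimes\sN$-component of $j^1\ps|_\Y\in\sN^\vee\otimes\ext[2]{\tshf{\PP[4]}}|_\Y$, trivialized via the modular vector field, defines an intrinsic endomorphism of $\sN$ with eigenvalues $(1,2,3)$ at every point.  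Because the eigenvalues are pairwise distinct, this yields a global eigenbundle decomposition $\sN\cong L_1\oplus L_2\oplus L_3$, where $L_k$ is the weight-$k$ eigenbundle.

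Next, I would extract two multiplicative relations between the $L_k$ from the higher-order structure of $\ps$.  The $1$-jet of $\ps$ contains the term $2\lambda\,z\,\cvf{x}\wedge\cvf{y}$ coming from $\{x,y\}=\lambda\cvf{z}f=2\lambda z+\cdots$; its projection onto $L_3^\vee\otimes L_1\otimes L_2\subset\sN^\vee\otimes\ext[2]{\sN}$ is an intrinsic section that is nonzero in every normal-form chart, so this line bundle is trivial and $L_3\cong L_1\otimes L_2$.  Similarly, the $2$-jet of $\ps$ contains the term $-3\lambda\,y^2\,\cvf{x}\wedge\cvf{z}$ coming from $\{z,x\}=\lambda\cvf{y}f=3\lambda y^2+\cdots$; its projection to $L_2^{\vee\otimes 2}\otimes L_1\otimes L_3 \subset \sym[2]{\sN^\vee}\otimes\ext[2]{\sN}$ is likewise nowhere vanishing, so $L_2^{\otimes 2}\cong L_1\otimes L_3$.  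Combined, these relations force $L_2\cong L_1^{\otimes 2}$ and $L_3\cong L_1^{\otimes 3}$.

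Finally, I would compute $\deg\sN$ in two incompatible ways.  From the exact sequence $0\to\tshf{\Y}\to\tshf{\PP[4]}|_\Y\to\sN\to 0$ together with $\deg\tshf{\Y}=0$ (since $\Y$ is elliptic), one obtains $\deg\sN=c_1(\tshf{\PP[4]})\cdot[\Y]=5\cdot 4=20$.  On the other hand, the relations derived above give $\deg\sN=(1+2+3)\deg L_1=6\deg L_1$, which would force $\deg L_1=10/3\notin\ZZ$.  This contradiction rules out a purely elliptic log symplectic structure on $\PP[4]$ whose singular locus has degree four.

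The main technical point will be confirming that the jet components I project are genuinely intrinsic, globally defined sections of the respective line bundles on $\Y$, rather than artefacts of the local normal-form coordinates.  This should follow from the canonical nature of the eigenbundle decomposition and the fact that the local coefficients $2\lambda$ and $-3\lambda$ in the normal form are nonzero at every point of $\Y$ by the purely elliptic assumption.
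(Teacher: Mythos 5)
Your route is genuinely different from the paper's. The paper exploits the fact that a degree-four elliptic curve in $\PP[4]$ is degenerate---it is the complete intersection of a hyperplane $\H$ and two quadrics---then lifts the Poisson bracket to the homogeneous coordinate ring via Bondal's theorem and shows by an explicit computation with the pencil of quadrics and the rank-one Hessian condition that $\H$ must be a Poisson divisor, hence contained in $\D$, contradicting the irreducibility forced by normality. Your argument instead stays on $\Y$ and extracts a divisibility obstruction from the $\tE_8$ weights $(1,2,3)$; in outline it is shorter, avoids Bondal's theorem and the coordinate computation, and the arithmetic endgame ($6\nmid 20$) is sound given your line-bundle relations. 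It would also transplant more easily to other ambient fourfolds.

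That said, the step you flag as ``the main technical point'' is a genuine gap rather than a formality, and the naive form of your intrinsic-ness claim is false. The splitting of $\ext[2]{\tshf{\PP[4]}}|_\Y$ into $\tshf{\Y}\otimes\sN$ and $\ext[2]{\sN}$ is only a filtration: $\tshf{\Y}\wedge\tshf{\PP[4]}|_\Y$ is a canonical subbundle with quotient $\ext[2]{\sN}$, so the component of $j^1\ps|_\Y$ in $\sN^\vee\otimes\tshf{\Y}\otimes\sN$ depends on a splitting of the normal bundle sequence. Since the $\ext[2]{\sN}$-component (your $2\lambda\,dz\otimes\cvf{x}\wedge\cvf{y}$) is nonzero, your ``endomorphism of $\sN$'' is only defined up to perturbations of its weight-$3$ column, and you must verify that the eigenvalues and the isomorphism classes of the eigenbundles survive this ambiguity (they do, but it needs an argument). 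More seriously, the quadratic part of $\ps$ along $\Y$ is not intrinsic when the linear part is nonzero---it shifts under substitutions $x_i\mapsto x_i+(\text{quadratic})$ by terms manufactured from the $1$-jet---so the existence of your section of $L_2^{\vee\otimes 2}\otimes L_1\otimes L_3$ requires checking that no allowed substitution contributes to that particular component. A cleaner variant runs the argument on the defining section $s=\ps^2\in\cohlgy[0]{\PP[4],\acan[{\PP[4]}]}$ of $\D$ rather than on $\ps$: $s$ vanishes to order two along $\Y$, so its class modulo the cube of the ideal of $\Y$ is an honest section of $\sym[2]{\sN^\vee}\otimes\acan[{\PP[4]}]|_\Y$ of rank one with canonical kernel $K\subset\sN$, and the cubic part of $s$ restricted to $K$ is then well defined and is everywhere the cube of a nonvanishing linear form by the $\tE_8$ normal form. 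This yields $L_2^{\otimes 3}\cong\acan[{\PP[4]}]|_\Y$, hence $3\deg L_2=20$, the same contradiction with far fewer intrinsic-ness worries.
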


\begin{proof}
Suppose to the contrary that such a structure exists, and let $\D$ be its degeneracy divisor.  Since $\D$ is normal, it must be irreducible.

Notice that the elliptic curve $\Y$ is necessarily the complete intersection of a hyperplane $\H \subset \PP[4]$ and two quadrics $\Q_1,\Q_2 \subset \PP[4]$.  Let $h$ be a defining equation for the hyperplane, and extend $h$ to a system $(h,x_0,x_1,x_2,x_3)$ of homogeneous coordinates for $\PP[4]$.  Without loss of generality, we may assume that the quadratic forms $q_1,q_2$ defining $\Q_1$ and $\Q_2$ depend only on the coordinates $x_0,\ldots,x_3$.  Then, as is well known, every quadric in the pencil spanned by $q_1$ and $q_2$ has rank equal to three or four.

A theorem of Bondal~\cite{Bondal1993} asserts that the Poisson bracket on $\PP[4]$ has a canonical lift to a Poisson bracket on the homogeneous coordinate ring $\CC[h,x_0,\ldots,x_3]$, such that the elementary brackets $\{h,x_i\}$ and $\{x_i,x_j\}$ are given by homogeneous quadratic forms.  
To derive a contradiction, we will show by direct calculation that all Poisson brackets of the form $\{h,x_i\}$ must be multiples of $h$, i.e.~that the ideal $(h)$ is a Poisson ideal.  This implies that the hyperplane $\H \subset \PP[4]$ is a Poisson subspace.  Since $\dim \H = 3$, the Poisson tensor must have rank at most three on $\H$, so that $\H \subset \D$, which contradicts the irreducibility of $\D$.

In our calculation, we will require two basic properties of the bracket:
\begin{enumerate}
\item The ideal $\sI = (h,q_1,q_2)$ cutting out $\Y$ is a Poisson ideal, i.e. $\{\sI,\sO{\CC^5}\} \subset \sI$.
\item The homogeneous quintic polynomial $f$ defining $\D$ is a Casimir function, i.e.~$\{f,\sO{\CC^5}\} = 0$.
\end{enumerate}
These properties can easily be established using the definition of the quadratic Poisson bracket via the canonical Poisson module structure on $\acan[{\PP[4]}] \cong \sO{\PP[4]}(5)$ in \cite[Section 12]{Polishchuk1997}; the first property follows from the fact that $\Y \subset \PP[4]$ is a strong Poisson subspace in the sense of \cite{Gualtieri2013a}, and the second follows from the fact that the anticanonical section cutting out $\D \subset \PP[4]$ is a Poisson flat section~\cite[Proposition 7.4]{Polishchuk1997}.

 Since $\sI$ is a Poisson ideal, the elementary brackets must have the form
\begin{align}
\{h,x_i\} = s_i q_1 + t_i q_2 \mod h \label{eqn:st-def}
\end{align}
for constants $s_i,t_i \in \CC$.  We are therefore required to show that $s_i = t_i = 0$ for all $i$.  It will be useful to define linear forms $A,B,C,D$ by the formulae
\begin{align}
\begin{split}
\{h,q_1\} &= Aq_1 + Bq_2 \mod h \\
\{h,q_2\} &= Cq_1 + Dq_2 \mod h.
\end{split}\label{eqn:ABCDdef}
\end{align}

Now we observe that, since $\D$ is singular along the complete intersection $\Y$, the defining equation for $\D$ may be written as
\[
f = \tfrac{1}{2}a_{11} q_1^2 + a_{12} q_1q_2 + \tfrac{1}{2} a_{22}q_2^2  + (b_1q_1+b_2q_2)h \mod h^2,
\]
for quadratic forms $b_1,b_2$ and linear forms $a_{11},a_{12},a_{22}$ in the variables $x_0,\ldots,x_3$.  Since $\D$ has $\tE_8$ singularities along $\Y$, the Hessian of $f$ must have rank one on the cone over $\Y$ in $\CC^5$.  Hence the two-by-two minors of the Hessian must lie in $\sI$.  In particular, the expressions
\begin{align*}
M &= a_{11}a_{22}-a_{12}^2 & N &= b_1a_{12}-b_2a_{11}
\end{align*}
must lie in $\sI$.  Therefore $M$ is a quadratic form in the pencil spanned by $q_1$ and $q_2$.  Evidently, $M$ has rank at most three, and so there are exactly two possibilities: either $M$ is identically zero, or it has rank equal to three.  We now consider these two cases separately.

\paragraph*{Case 1:} $M=0$.  In this case, the linear forms $a_{11}$, $a_{12}$ and $a_{22}$ must all be constant multiples of a single form $a$.  Using the equation $M=0$, we may factor the first three terms of $f$ and write
\[
f = \tfrac{1}{2}a q_1^2 + (b_1q_1+b_2q_2)h  \mod h^2
\]
without loss of generality.  Since $\D$ is irreducible, $f$ is not divisible by $h$, and hence $a \ne 0$.

The minor $N \in \sI$ defined above is now given by $N = ab_2$.  Since $a$ is not a zero divisor modulo $\sI$, this forces $b_2$ to be a linear combination of $q_1$ and $q_2$.  If $b_2$ were a multiple of $q_1$, we would have $f \in (h,q_1)^2$, which would imply that $\D$ is singular along the quadric surface defined by $h$ and $q_1$, contradicting the normality of $\D$.  Therefore $b_2$ is linearly independent from $q_1$, and so we may assume without loss of generality that
\begin{align}
f = \tfrac{1}{2}a q_1^2 + bq_1h+\tfrac{1}{2}q_2^2h \mod h^2 \label{eqn:f-simpl}
\end{align}
for some quadratic form $b$.

We now compute the Poisson bracket $\{h,f\} = 0$ modulo $h$, giving the equation
\[
\tfrac{1}{2}\{h,a\}q_1^2 = - a(Aq_1+Bq_2)q_1 \mod h.
\]
Since $\sI$ is a Poisson ideal, the left-hand side of this equation lies in $\sI^3$.  Since $\sI$ is a complete intersection and $a$ is not a zero divisor modulo $\sI$, this forces $A=B=0$.  Therefore the brackets $\{h,q_1\}$ and $\{h,a\}$ are multiples of $h$.  In particular, we may write
\begin{align*}
\{h,q_1\} &= hQ \mod h^2
\end{align*}
for some $Q$ depending only on $x_0,\ldots,x_3$.

If the quadratic form $q_1$ has rank four, we may choose the coordinates so that $q_1 = \tfrac{1}{2}(x_0^2+x_1^2+x_2^2+x_3^3)$.  Using \eqref{eqn:st-def}, we compute the bracket
\[
\{h,q_1\} = \rbrac{\sum_{i=0}^3s_ix_i}q_1 + \rbrac{\sum_{i=0}^3t_ix_i}q_2 \mod h
\]
Since $\sI$ is a complete intersection and $\{h,q_1\}$ is divisible by $h$, we must have $s_i=t_i = 0$ for all $i$ and hence the ideal $(h)$ is Poisson.

If, on the other hand, the form $q_1$ has rank three, we may choose the  coordinates so that $q_1 = \tfrac{1}{2}(x_0^2+x_1^2+x_2^2)$ and $q_2 = \tfrac{1}{2}x_2^2 \mod (x_0,x_1,x_2)^2$.  Calculating as above, we find that $s_i=t_i = 0$ for $0 \le i \le 2$.  Hence the Hamiltonian vector field of $h$ is given by
\[
\{h,\cdot\} = (sq_1+tq_2)\cvf{x_3} \mod h \cdot \der[1]{\CC^5}
\]
where $s=s_3$ and $t=t_3$.  It remains to show that $s$ and $t$ are equal to zero.  Using \eqref{eqn:f-simpl}, we compute
\[
0 =h^{-1}\{h,f\} = aQq_1 + (t\cvf{x_3}b+sx_3)q_1q_2   + (tx_3)q_2^2  \mod (h,q_1^2).
\]
Since  $x_3$ is not a zero divisor modulo $(h,q_1)$, we must have $t=0$.  Moreover, since all terms but the first lie in $(q_1,q_2)^2$, we must have that $Q$ is a linear combination of $q_1$ and $q_2$, say $Q = \lambda_1q_1 + \lambda_2q_2$.  Therefore the equation above reduces to
\[
(sx_3+\lambda_2a)q_1q_2 = 0 \mod (h,q_1^2) 
\]
which implies that $sx_3 = -\lambda_2 a$.  If $s$ were nonzero, we would have
\[
\{h,a\} = \{h,-s\lambda_2^{-1}x_3\} = -s^2\lambda_2^{-1} q_1 \mod h
\]
which contradicts the fact that $\{h,a\} = 0 \mod h$.  Hence $s$ must be zero, completing the proof that $(h)$ is a Poisson ideal when $M=0$.

\paragraph*{Case 2:} $M$ has rank three.  In this case, the linear forms $x=a_{11}$, $y=a_{22}$ and $z=a_{12}$ must be linearly independent, and hence we may use them as the homogeneous coordinates $x_0,x_1$ and $x_2$

The quadric $M = xy-z^2$ lies in $\sI$ and hence it is a linear combination of $q_1$ and $q_2$.  After a change of basis in the pencil spanned by $q_1$ and $q_2$, we may assume that $M = q_1$.  We then choose our final homogeneous coordinate $w=x_3$ so that
\[
q_2 = w^2  + Q(x,y,z)
\]
for a quadratic form $Q$ in three variables.

Computing the bracket $\{h,f\} = 0$, we find the equation
\begin{align}
\begin{split}
\tfrac{1}{2}\{h,x\}q_1^2 + \{h,z\}q_1q_2+\tfrac{1}{2}\{h,y\}q_2^2 
&= - (xB+z(A+D)+yC)q_1q_2 \\
&\quad -(Ax+zC)q_1^2 \\
&\quad - (yD+zB)q_2^2 \mod h
\end{split}\label{eqn:ABCD}
\end{align}
Since $\sI$ is a Poisson ideal, the left hand side of this equation lies in $\sI^3$.  Thus the coefficients of $q_1^2,q_1q_2$ and $q_2^2$ on the right-hand side must be linear combinations of $q_1$ and $q_2$ modulo $h$.  But these coefficients are quadratic forms that lie in the ideal $(x,y,z)$ defining the critical locus of $q_1=xy-z^2$.  Since the critical locus is not in the base locus of the pencil, these forms must all be multiples of $q_1$.  This puts strong constraints on $A,B,C$ and $D$; one can easily compute that they must be written
\begin{align*}
A &=  Fy+Gz &
B&= - Uy-Vz &
C &=  -Gx-Fz &
D &= Vx+Uz
\end{align*}
for some constants $F,G,U,V \in \CC$.
Then, using the form~\eqref{eqn:st-def} of the elementary brackets, the definition \eqref{eqn:ABCDdef} of $A$ and $B$,  and the formula $q_1=xy-z^2$ one finds 
\begin{align}
\begin{split}
\{h,x\} &= Fq_1 - Uq_2 \mod h \\ 
\{h,y\} &= 0  \mod h\\ 
\{h,z\} &= -\tfrac{1}{2}Gq_1+\tfrac{1}{2}Vq_2 \mod h. 
\end{split}\label{eqn:hxyz}
\end{align}
Equation \eqref{eqn:ABCD} now gives
\[
Fq_1^3+(U+G)q_1^2q_2+Vq_1q_2^2 = 0 \mod h
\]
which implies that $F=V=0$ and $U=-G$.  Hence $C=-Gx$ and $D=-Gz$.  Using the definition \eqref{eqn:ABCDdef} of $C$ and $D$, and the brackets~\eqref{eqn:hxyz}, we find
\begin{align*}
Gq_2\cvf{x}Q -\tfrac{1}{2}Gq_1\cvf{z}Q + 2\{h,w\}w &= -Gxq_1-Gzq_2 \mod h
\end{align*}
which is easily seen to imply that $G=0$, and that $\{h,w\} = 0$ modulo $h$.  It follows that $h$ generates a Poisson ideal, as required.
\end{proof}

\subsection{Cubic fourfolds}
\label{sec:ell-cubic}
In order to complete the proof of \autoref{thm:p4-ell}, it remains to show that a smooth cubic fourfold $\X$ does not admit any purely elliptic log symplectic forms.  From \autoref{tab:hypsurf-sols}, we know that the degeneracy divisor $\D \subset \X$ of such a structure would have $\tE_6$ singularities along a locus $\Y \subset \X$ that is a union of elliptic curves of total degree six.  Hence $\Y$ must either be a single curve of degree six or a pair of plane cubics.   In \autoref{prop:cubic-6} and \autoref{prop:cubic-33} below, we will show that such a hypersurface cannot exist.  We begin by reducing the problem to the study of singular cubic fourfolds:
\begin{lemma}\label{lem:mult3}
Let $\X \subset \PP[5]$ be a smooth cubic fourfold and $\D \subset \X$ an anticanonical divisor.  Suppose that $\C \subset \D_{sing}$ is a submanifold along which $\D$ has multiplicity three.  Then there is a unique cubic fourfold $\tD \subset \PP[5]$ such that $\D = \tD \cap \X$ and $\tD$ has multiplicity three along $\C$.  Moreover $\tD$ contains the variety $\Sec[2]{\C}$ swept out by the secant planes of $\C$.
\end{lemma}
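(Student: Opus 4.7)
\textbf{Uniqueness.} If $\tD_{1},\tD_{2}\subset\PP[5]$ are two cubic fourfolds cutting out $\D$ on $\X$, then their defining cubics differ by a scalar multiple $cF$ of the equation $F$ of $\X$, since both have the same restriction to $\X$.  Because $\X$ is smooth along $\C$, $F$ vanishes to order exactly one on $\C$, so for both $\tD_{i}$ to vanish to order three there, the difference $cF$ would have to vanish to order at least three, forcing $c=0$.

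\textbf{Existence.}  The plan is to extract from the hypothesis the short exact sequence
\[
0 \to \sI_{\C}^{2} \to \sI_{\C}^{3}\otimes\sO{\PP[5]}(3) \to \sI_{\C,\X}^{3}\otimes\sO{\X}(3) \to 0
\]
on $\PP[5]$, in which the first arrow is multiplication by $F$.  Its essential content is the identity $\sI_{\C}^{3}\cap(F)=F\cdot\sI_{\C}^{2}$, which I would verify locally: smoothness of $\X$ along $\C$ ensures that $F$ is part of a regular sequence generating $\sI_{\C}$, so multiplication by $F$ is injective on the associated graded ring of $\sI_{\C}$.  The hypothesis provides a section $\sigma\in H^{0}(\sI_{\C,\X}^{3}\otimes\sO{\X}(3))$ cutting out $\D$; lifting it to $\tilde G\in H^{0}(\sI_{\C}^{3}\otimes\sO{\PP[5]}(3))$ requires the obstruction in $H^{1}(\PP[5],\sI_{\C}^{2})$ to vanish.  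Taking $\tD=V(\tilde G)$ then gives the desired cubic, and uniqueness guarantees $\tD\cap\X=\D$.

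To establish $H^{1}(\PP[5],\sI_{\C}^{2})=0$, I would combine the ideal-sheaf sequence $0\to\sI_{\C}^{2}\to\sO{\PP[5]}\to\sO{\PP[5]}/\sI_{\C}^{2}\to 0$ with the infinitesimal-neighbourhood sequence $0\to\mathcal{N}^{\vee}_{\C/\PP[5]}\to\sO{\PP[5]}/\sI_{\C}^{2}\to\sO{\C}\to 0$.  Since $H^{1}(\sO{\PP[5]})=0$, the obstruction reduces to showing $H^{0}(\sO{\PP[5]}/\sI_{\C}^{2})=\mathbb{C}$.  Connectedness of $\C$ gives $H^{0}(\sO{\C})=\mathbb{C}$, and $H^{0}(\mathcal{N}^{\vee}_{\C/\PP[5]})=0$ follows from the restricted Euler sequence: $\Omega^{1}_{\PP[5]}|_{\C}$ embeds in $\sO{\PP[5]}(-1)|_{\C}^{\oplus 6}$, and $\sO{\PP[5]}(-1)|_{\C}$ has negative degree on the positive-degree curve $\C$.

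\textbf{The secant variety inclusion.}  This will follow from a classical Bezout-type observation.  Let $P\cong\PP[2]\subset\PP[5]$ be a secant $2$-plane meeting $\C$ in three non-collinear points $p_{1},p_{2},p_{3}$.  The restriction to $P$ of the cubic defining $\tD$ is a plane cubic with a triple point at each $p_{i}$.  A plane cubic with a triple point at $p$ must factor as three concurrent lines through $p$; imposing this condition also at a non-collinear $p_{2}$ collapses the three lines onto the line $\overline{p_{1}p_{2}}$ taken thrice, which cannot have a triple point at the third non-collinear $p_{3}$.  Hence the restriction vanishes identically, $P\subset\tD$, and the density of such planes in $\Sec[2]{\C}$ yields the desired inclusion.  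The most delicate step overall is the cohomological vanishing; it is clean for the connected elliptic curves arising in the intended application, but a reducible singular locus would require additional work to match the scalar correction $c$ across connected components.
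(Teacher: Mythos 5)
Your proposal is correct and follows essentially the same route as the paper: the same exact sequence $0 \to \sI^2 \to \sI^3(3) \to \sJ^3(3) \to 0$, the same reduction of the obstruction to $h^1(\sI^2)=0$ via the conormal sheaf and the restricted Euler sequence, and the same Bezout-type argument on secant planes for the final inclusion. Your closing remark about connectedness of $\C$ is a fair observation—the paper's vanishing of $h^1(\sI_\C)$ does implicitly use $h^0(\sO{\C})=\CC$—but this does not change the substance of the argument.
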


\begin{proof}
Let $\sJ \subset \sO{\X}$ and $\sI \subset \sO{\PP[5]}$ be the ideals defining $\C$ as a subspace of $\X$ and $\PP[5]$, respectively, and let $\sO{\PP[5]}(-\X)\subset \sO{\PP[5]}$ be the ideal defining $\X$.  By the adjunction formula, we have $\acan[\X] \cong \sO{\X}(3)$, and so the anticanonical divisors with multiplicity three along $\C$ are cut out by sections of $\sJ^3(3)$.  To prove the existence and uniqueness of $\tD$, we must show that the map
\[
\xymatrix{
\cohlgy[0]{\sI^3(3)} \ar[r] & \cohlgy[0]{\sJ^3(3)}
}
\]
induced by the restriction $\sO{\PP[5]} \to \sO{\X}$ is an isomorphism.

Since $\X$ and $\C$ are smooth, the restriction gives a surjection $\sI^3 \to \sJ^3$ with kernel $\sI^3 \cap \sO{\PP[5]}(-\X)$, and this kernel is the precisely the image of the multiplication map $\sI^2 \otimes \sO{\PP[5]}(-\X) \to \sI^3$.  Twisting by $\sO{\PP[5]}(3)$, we obtain an exact sequence
\[
\xymatrix{
0 \ar[r] & \sI^2 \ar[r] & \sI^3(3) \ar[r] & \sJ^3(3)\ar[r] & 0
}
\]
and the relevant part of the long exact sequence reads
\[
\xymatrix{
\cohlgy[0]{\sI^2} \ar[r] & \cohlgy[0]{\sI^3(3)} \ar[r] & \cohlgy[0]{\sJ^3(3)} \ar[r] & \cohlgy[1]{\sI^2}.
}
\]
Now $h^0(\sI^2) = 0$ since $\PP[5]$ has no nonconstant global functions.  Hence the lemma will follow if we can show that $h^1(\sI^2)=0$ as well.  For this, we consider exact sequence
\[
\xymatrix{
0 \ar[r] & \sI^2 \ar[r] & \sI \ar[r] & \sN^\vee \ar[r] & 0
}
\]
defining the conormal sheaf $\sN^\vee$ of $\C$ in $\PP[5]$.  Since $\sN^\vee \subset \forms[1]{\PP[5]}|_\C \subset \sO{\C}(-1)^{\oplus 6}$, we have $h^0(\sN^\vee)=0$.  Hence the vanishing of $h^1(\sI^2)$ follows from the vanishing of $h^1(\sI)$, which in turn follows from the exact sequence
\[
\xymatrix{
0 \ar[r] & \sI \ar[r] & \sO{\PP[5]}\ar[r] & \sO{\X}\ar[r] & 0
}
\]
and the vanishing of $h^1(\sO{\PP[5]})$.  This completes the proof of the existence and uniqueness of $\tD$.

To see that $\Sec[2]{\C} \subset \tD$, let $\W \subset \PP[5]$ be a plane that hits $\C$ at three points $p,q,r \in \C$ that are not collinear.  Then either $\W \subset \Y$ or the intersection $\W \cap \Y$ is a cubic curve with multiplicity three at $p,q$ and $r$.  But the latter is impossible because the only cubic curve with three non-collinear singular points is a triangle, for which the singularities are nodes.
\end{proof}

\begin{proposition}\label{prop:cubic-6}
Let $\X \subset \PP[5]$ be a smooth cubic fourfold.  If $\D \subset \X$ is an anticanonical divisor having multiplicity three along an elliptic curve of degree six, then $\D$ is not normal.
\end{proposition}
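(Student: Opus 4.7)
My approach is to apply \autoref{lem:mult3} to produce a cubic fourfold $\tD \subset \PP[5]$ satisfying $\D = \tD \cap \X$ and having multiplicity three along $\C$, and then to show that $\tD$ must be reducible or non-reduced.  Since $\D$ is then cut out on $\X$ by the same cubic form defining $\tD$, this immediately forces $\D$ to be reducible or non-reduced, and in particular non-normal.

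The engine of the argument is a purely ideal-theoretic identity.  Let $V \subset \cohlgy[0]{\PP[5],\sO{\PP[5]}(1)}$ be the space of linear forms vanishing on $\C$, so that the projective span of $\C$ has dimension $5 - \dim V$.  The multiplicity-three condition means that the defining cubic $f$ of $\tD$ lies in the degree-three piece $(\sI_\C^{3})_3$ of the third power of the ideal of $\C$, and I would establish
\begin{equation*}
(\sI_\C^{3})_3 = \sym[3]{V}.
\end{equation*}
This is because every element of $(\sI_\C^{3})_3$ is a sum of products $abc$ with $a,b,c \in \sI_\C$ homogeneous and $\deg a + \deg b + \deg c = 3$; since $\sI_\C$ contains no constants, each factor has positive degree, hence each has degree exactly one, and so $a,b,c \in V$.

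It remains to bound $\dim V$.  A smooth elliptic curve of degree six cannot embed in $\PP[1]$ or in $\PP[2]$ (a smooth plane sextic has genus $\binom{5}{2} = 10 \ne 1$), so the span of $\C$ has dimension at least three and $\dim V \le 2$.  On the other hand, $\dim V = 0$ would force $f = 0$, contradicting the existence of $\tD$.  Hence $\dim V \in \{1,2\}$, and in either case $f \in \sym[3]{V}$ factors over $\CC$ as a product of three linear forms in $V$.  Consequently $\tD$ is, possibly with multiplicities, a union of three hyperplanes, each of which contains the linear span of $\C$; so $\D = \tD \cap \X$ is a union (with multiplicities) of three hyperplane sections of $\X$, and in particular is reducible or non-reduced, and hence not normal.

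The main obstacle I anticipate is simply noticing this ideal-theoretic shortcut.  The alternative route---using the inclusion $\tD \supset \Sec[2]{\C}$ provided by \autoref{lem:mult3} and analyzing the trisecant-plane variety directly via generic projections and classical trisecant-counting formulas, to conclude that $\Sec[2]{\C}$ fills either $\PP[5]$ or a hyperplane---would also work, but would be substantially more technical.
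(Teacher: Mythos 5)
Your overall strategy (pass to the auxiliary cubic $\tD$ from \autoref{lem:mult3} and show that its equation degenerates) is sound and your endgame is fine, but the ``purely ideal-theoretic identity'' at the heart of the argument has a genuine gap. Having multiplicity three along $\C$ means that $f$ lies in the third \emph{symbolic} power $\sI_\C^{(3)}$ of the homogeneous ideal of $\C$ --- equivalently, that $f$ is a global section of the cube of the ideal \emph{sheaf} twisted by $\sO{\PP[5]}(3)$ --- not that $f$ lies in the ordinary power $\sI_\C^{3}$ of the homogeneous ideal. Your degree count (each factor of a product landing in degree three must be linear) computes the degree-three piece of the ordinary power only. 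The two can differ in low degrees: for the ideal $I=(xy,yz,zx)$ of three non-collinear points in $\PP[2]$, the cubic $xyz$ has multiplicity two at every point of the scheme, so it lies in $I^{(2)}$, while $I^{2}$ contains nothing below degree four. So the step ``$f$ has multiplicity three along $\C$, hence $f\in\sym[3]{V}$'' is precisely the assertion that needs proof, and it is where all the geometry of the elliptic sextic enters. Note also that your lower bound $\dim V\ge 1$ rests on the same unproved identity: a priori $\C$ could span all of $\PP[5]$, and ruling this out is not formal.

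The conclusion you are after is in fact true, but establishing it requires essentially the arguments the shortcut was meant to avoid. The paper first uses the inclusion $\Sec[2]{\C}\subset\tD$ from \autoref{lem:mult3}, together with the fact that the trisecant planes of an elliptic normal sextic fill $\PP[5]$ (via Riemann--Roch and linear projection), to show that $f$ vanishes on the span $\W$ of $\C$ and that $\dim\W\le 4$. When $\dim\W=3$ this only gives $f\in(x_0,x_1)$, and promoting this to $f\in(x_0,x_1)^{2}$ requires the derivative/Hessian computation showing that the quadric coefficients $q_0,q_1$ are singular along $\C$ and hence vanish (the singular locus of a quadric is linear, and $\C$ spans $\W$); at that point the paper concludes that $\tD$ is singular along $\W$ without ever needing to factor $f$ into linear forms. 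If you wish to keep your factorization argument, you must supply the missing comparison of the symbolic and ordinary cubes in degree three for this particular curve, which amounts to the same geometric work.
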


\begin{proof}
Let $\Y \subset \X\subset \PP[5]$ be a degree six elliptic curve. In light of \autoref{lem:mult3} it is enough to show that a cubic fourfold $\tD$ with multiplicity three along $\Y$ cannot be normal.  

To this end, let $\W \subset \PP[5]$ be the smallest linear subspace containing $\Y$.  We have $h^0(\sO{\Y}(1)) = 6$ by Riemann--Roch, and hence $\Y$ is the linear projection of an elliptic normal sextic $\Y' \subset \PP[5]$ to $\W$.  Since the secant planes of an elliptic normal sextic fill out all of $\PP[5]$, we must have that $\W = \Sec[2]{\Y}$.  Hence $\W \subset \tD$ by \autoref{lem:mult3}. It follows that the dimension of $\W$ is at most four.   
On the other hand, the dimension of $\W$ is at least three because $\PP[2]$ does not contain an elliptic sextic. 

If $\W$ has dimension four, then $\W$ is an irreducible component of $\tD$.  Hence $\tD$ is not normal.

So, suppose that $\W$ has dimension three.  We may choose homogeneous coordinates $x_0,x_1,y_0,\ldots,y_3$ for $\PP[5]$ such that $\W$ is cut out by the equations $x_0=x_1=0$.  Since $\tD$ contains $\W$, it is cut out by a cubic polynomial of the form
\[
f = x_0q_0 + x_1 q_1  \mod (x_0,x_1)^2
\]
where $q_0,q_1$ are quadratic  forms in the variables $y_0,\ldots,y_3$.  Now $f$ vanishes to order three on $\Y$.  Hence the derivative
\[
df = q_0\, dx_0 + q_1 \, dx_1 \mod (x_0,x_1)
\]
and the Hessian
\[
\hess{f} = dq_0 \cdot dx_0 + dq_1 \cdot dx_1  \mod (x_0,x_1,(dx_0)^2,dx_0\cdot dx_1,(dx_1)^2) 
\]
must vanish on $\Y$.  This implies that the quadrics defined by $q_0$ and $q_1$ are singular along $\Y$.  But the singular locus of a quadric is a linear subspace, and $\W$ is the smallest linear subspace containing $\Y$.  Hence $q_0=q_1=0$ identically, which implies that $f \in (x_0,x_1)^2$.  Hence $\tD$ is singular along $\W$, which implies that $\tD$ is not normal.
\end{proof}

\begin{proposition}\label{prop:cubic-33}
Let $\X \subset \PP[5]$ be a smooth cubic fourfold.  If $\D \subset \X$ is an anticanonical divisor having multiplicity three along a pair $\Y_1,\Y_2 \subset \X$ of disjoint cubic elliptic curves, then $\D$ is not normal.
\end{proposition}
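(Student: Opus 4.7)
The plan is to mirror the strategy used for \autoref{prop:cubic-6}: extend $\D$ to a cubic hypersurface in $\PP[5]$ using \autoref{lem:mult3}, and exploit the secant-variety geometry of plane cubics to constrain its structure. Since each $\Y_i$ is an elliptic curve of degree three, Riemann--Roch implies it spans a plane $\W_i \cong \PP[2]$, and any three noncollinear points of $\Y_i$ span this plane. Consequently $\W_i \subset \Sec[2]{\Y_1 \cup \Y_2} \subset \tD$, where $\tD \subset \PP[5]$ is the cubic fourfold provided by \autoref{lem:mult3}, cutting out $\D$ on $\X$ and having multiplicity three along $\Y_1 \cup \Y_2$.

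The central step is to prove that $\tD$ has multiplicity at least three along each plane $\W_i$. I would adapt the Bezout-style argument appearing at the end of the proof of \autoref{prop:cubic-6}: work in coordinates where $\W_i = \{x_3 = x_4 = x_5 = 0\}$, write the defining cubic $f$ of $\tD$ as $f = x_3 g_3 + x_4 g_4 + x_5 g_5$ with $g_j$ quadratic, and use mult-three at points of $\Y_i$ to force the restrictions $g_j|_{\W_i}$ to vanish on $\Y_i$. Since these are conics on $\W_i \cong \PP[2]$ and $\Y_i$ is a plane cubic, Bezout (a conic meets a cubic in six points) forces $g_j|_{\W_i} = 0$, whence $f \in \sI(\W_i)^2$. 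One more iteration: after writing $f = \sum x_j x_k h_{jk}$ with $h_{jk}$ linear, the condition that all second partial derivatives vanish on $\Y_i$ makes $h_{jk}|_{\W_i}$ a linear form vanishing on the plane cubic, hence zero. This yields $f \in \sI(\W_i)^3$, and since $f$ is itself cubic, $f$ is a cubic form in the three linear forms defining $\W_i$.

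Armed with this, $f$ lies in $\sI(\W_1)^3 \cap \sI(\W_2)^3$. The planes $\W_1$ and $\W_2$ must be distinct, since otherwise the two plane cubics would meet in nine points by Bezout, contradicting $\Y_1 \cap \Y_2 = \emptyset$. I would then split into three cases according to whether $\W_1 \cap \W_2$ is empty, a single point, or a line, and compute $\sI(\W_1)^3 \cap \sI(\W_2)^3$ directly in adapted homogeneous coordinates. When the planes are disjoint, the two ideals involve disjoint sets of variables, forcing $f = 0$ and hence precluding the existence of $\D$; when they meet at a point, the only shared variable is a single linear form $\ell$, so $f = c\ell^3$ and $\tD$ is a triple hyperplane; and when they meet along a line, $f$ is a binary cubic form in two variables, which factors as a product of three linear forms, presenting $\tD$ as a union of three hyperplanes through the $\PP[3]$ spanned by $\W_1$ and $\W_2$. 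In every case $\tD$ is nonexistent, nonreduced, or reducible, so $\D = \tD \cap \X$ inherits the same defect---in particular, it fails to be normal.

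The main technical obstacle is the Bezout propagation from $\Y_i$ to $\W_i$; the crucial observation is that a nonzero polynomial of degree at most two on $\W_i \cong \PP[2]$ cannot vanish on the plane cubic $\Y_i$. This is essentially a repetition of computations already carried out in the proof of \autoref{prop:cubic-6}, and the remainder of the argument is a bookkeeping exercise with monomial ideals in well-chosen coordinates.
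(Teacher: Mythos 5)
Your proposal is correct and follows essentially the same route as the paper: extend $\D$ to the cubic $\tD$ via \autoref{lem:mult3}, show its equation lies in $\sym[3]{\W_1^\perp}\cap\sym[3]{\W_2^\perp}=\sym[3]{\W_1^\perp\cap\W_2^\perp}$, and conclude non-normality from the resulting degeneracy. Your explicit derivative/Bezout computation is a valid unpacking of the step the paper compresses into the remark that $\Y_i=\X\cap\W_i$ is a complete intersection, and your case analysis on $\dim(\W_1\cap\W_2)$ is equivalent to the paper's codimension count for the singular locus of $\tD$.
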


\begin{proof}
Because of the degrees, the curves $\Y_1,\Y_2$ are contained in planes $\W_1,\W_2$.  Let $\W_i^\perp \subset \cohlgy[0]{\sO{\PP[5]}(1)}$ be the space of linear forms cutting out $\W_i$.  Because $\Y_i = \X \cap \W_i$ is a complete intersection, the cubic fourfolds with multiplicity three along $\Y_i$ are given by the subspace
\[
\sym[3]{\W_i^\perp} \subset \cohlgy[0]{\sO{\PP[5]}(3)}.
\]
Hence if $\tD \subset \PP[5]$ is the singular cubic fourfold provided by \autoref{lem:mult3}, its defining equation lies in the intersection
\[
\sym[3]{\W_1^\perp} \cap \sym[3]{\W_2^\perp} = \sym[3](\W_1^{\perp} \cap \W_2^\perp)
\]
Therefore $\tD$ has multiplicity three along the linear span of $\W_1$ and $\W_2$ in $\PP[5]$, which implies that the singular locus of $\tD$ has codimension at most $\dim(\W_1\cap \W_2)$ in $\tD$.  Thus $\tD$ can only be normal if $\W_1=\W_2$, but then the cubic curves $\Y_1$ and $\Y_2$ lie in the same plane, and hence they cannot be disjoint.
\end{proof}

\bibliographystyle{hyperamsplain}
\bibliography{ell-log-symp}
\end{document}